\numberwithin{equation}{section}
\newcommand{\eq}[1]{\begin{equation}#1\end{equation}}
\newcommand{\al}[1]{\begin{align}#1\end{align}}
\renewcommand{\cal}{\mathcal} 
\newcommand{\ii}{\mathrm{i}} 
\newcommand{\dd}{\mathrm{d}} 
\newcommand{\deq}{\mathrel{\mathop:}=} 
\renewcommand{\leq}{\leqslant} 
\renewcommand{\geq}{\geqslant} 
\renewcommand{\le}{\leq}
\renewcommand{\ge}{\geq}
\renewcommand{\P}{\mathbb{P}}
\newcommand{\E}{\mathbb{E}}
\newcommand{\C}{\mathbb{C}}
\newcommand{\R}{\mathbb{R}}
\newcommand{\T}{\mathbb{T}}
\newcommand{\N}{\mathbb{N}}
\newcommand{\hH}{\hat{H}}
\newcommand{\fg}{\mathfrak{g}}
\newcommand{\hg}{\hat{g}}
\newcommand{\hS}{\hat{S}}
\newcommand{\La}{\Lambda}
\newcommand{\de}{\delta}
\newcommand{\e}{{\varepsilon}}
\newcommand{\ga}{{\gamma}}
\newcommand{\la}{\lambda}
\newcommand{\p}[1]{({#1})}
\newcommand{\pa}[1]{\left({#1}\right)}
\newcommand{\qa}[1]{\left[{#1}\right]}
\newcommand{\h}[1]{\{{#1}\}}
\newcommand{\ha}[1]{\left\{{#1}\right\}}
\newcommand{\absa}[1]{\left\lvert #1 \right\rvert}
\newcommand{\norma}[1]{\left\lVert #1 \right\rVert}
\DeclareMathOperator{\tr}{Tr}
\DeclareMathOperator{\re}{Re}
\DeclareMathOperator{\im}{Im}
\renewcommand{\Im}{\im} 
\newcommand{\Lad}{\Lambda_{\textsc{d}}}
\newcommand{\Lao}{\Lambda_{\textsc{o}}}
\theoremstyle{plain} 
\newtheorem{theorem}{Theorem}[section]
\newtheorem*{theorem*}{Theorem}
\newtheorem{lemma}[theorem]{Lemma}
\newtheorem*{lemma*}{Lemma}
\newtheorem{corollary}[theorem]{Corollary}
\newtheorem*{corollary*}{Corollary}
\newtheorem*{proposition*}{Proposition}
\newtheorem{definition}[theorem]{Definition}
\newtheorem*{definition*}{Definition}
\newtheorem*{example*}{Example}
\theoremstyle{remark} 
\newtheorem{remark}[theorem]{Remark}
\newtheorem*{remark*}{Remark}
\newtheorem*{remarks*}{Remarks}
\newtheorem{notation}[theorem]{Notation}
\newtheorem*{notation*}{Notation}
\newtheorem*{assumption*}{Assumption}
\newcommand{\be}{\begin{equation}}
\newcommand{\ee}{\end{equation}}
\title{Spectral Statistics of Sparse Random Graphs\\with a General Degree Distribution}
\date{\today}
\author{
Ben Adlam\qquad Ziliang Che
\\\\\\
Harvard University\\
Cambridge MA 02138, USA \\ adlam@fas.harvard.edu\qquad zche@math.harvard.edu \\}
\begin{document} 


\maketitle

\vspace{0.7cm}

\begin{abstract}
	We consider the adjacency matrices of sparse random graphs from the Chung-Lu model, where edges are added independently between the $N$ vertices with varying probabilities $p_{ij}$.  The rank of the matrix $(p_{ij})$ is some fixed positive integer. We prove that the distribution of eigenvalues is given by the solution of a functional self-consistent equation.  We prove a local law down to the optimal scale and prove bulk universality. The results are parallel to \cite{Erdos2013b} and \cite{Landon2015}.
\end{abstract}

\vspace{1cm}
	\noindent{\bf AMS Subject Classification (2010):} 15B52, 60B20\\
\medskip
	{\bf Keywords:}  Sparse random graphs, power-law, scale-free, spectrum, universality.
\medskip

\newpage

\section{Introduction} \label{sec: introduction}

Since the introduction of the Erd\H{o}s-R\'{e}nyi model \cite{Erdos1959, Erdos1960}, random graphs have been studied extensively as central objects in probability theory \cite{Bollobas1998, Chung2006a}, computer science \cite{Goldreich2008, Vadhan2012,Dembo2014}, and biology \cite{Lieberman2005, Ohtsuki2006, Antal2006, Adlam2014}. In addition to furthering our understanding in these fields, random graphs arise naturally as models that reproduce many properties found in real-world networks \cite{Goffman1969, Wasserman1994, Watts1998, Chakrabarti1999, Albert1999, Barabasi1999, Chung2006a}. In many random graph models, the degrees of the vertices concentrate around a single value with small fluctuations. However, in some real-world networks the behavior of the degrees is evidently different \cite{Aiello2001}. A graph is called scale-free if the number of vertices with degree $j$ is proportional to $j^{-\beta}$ for some positive $\beta$. 

The spectral properties of random graphs are of great interest in applications as they relate to many combinatorial properties of the graph, to the mixing times of Markov chains, \textit{etc} \cite{Aldous2002, Bollobas1998}. In this paper, when we refer to any spectral property of a graph, we mean the eigenvalues and eigenvectors of its adjacency matrix. Many different statistics are used to study the spectra of random graphs. The statistics can be divided into two main types: bulk and edge. Bulk statistics involve the eigenvalues (and their respective eigenvectors) in the interior of the spectrum. More precisely, if we consider the empirical spectral distribution (\textsc{esd}) of an $N\times N$ random matrix, where a point mass of weight $1/N$ is placed at the location of each eigenvalue, then the bulk of the spectrum contains any interval where the limit of the \textsc{esd} has a density bounded away from zero. Edge statistics concern the behavior of the extremal eigenvalues at the edges of the spectrum. At an edge, the limiting density of the \textsc{esd} falls off to zero.

When the \textsc{esd} converges in distribution, we call the limit a global law as this tells us that the density of eigenvalues in a macroscopic interval of length order one for a large finite matrix is well approximated by the global law. At microscopic scales, we can ask about the density of eigenvalues in intervals of length order $N^{\e-1}$ for positive $\epsilon$. A local law proves that the density of eigenvalue in these microscopic windows is also well approximated by the limiting distribution. We cannot expect this to hold for intervals of length less than order $N^{-1}$, as the number of eigenvalues in such an interval does not grow with $N$, and so $N^{\e-1}$ is optimal.To study this convergence of the density of the eigenvalues, the Stieltjes transformation, which is defined as $m_{\mu}(z)\deq \int(t-z)^{-1} \mu(\dd t)$ for a measure $\mu$, is often used as this is equivalent to convergence in the \textsc{esd}. The parameter $z=E+\ii \eta\in\C^{+}$ tracks the density of the eigenvalues at energy $E$ for spectral windows of length $\eta$. Sometimes the limiting distribution has a closed form, like the semicircle distribution, but for more complicated models, it is often specified as a functional self-consistent equation of the Stieltjes transformation. The bounds on the rate of convergence of the Stieltjes transformation further divide local laws. A strong law is optimal and provides the bound $(N\eta)^{-1}$, whereas weaker laws bound the convergence with a smaller power of $N\eta$.

While the global and local laws are model dependent, the fluctuations of the eigenvalues about the locations predicted by the global law show substantial universality. In particular, the behavior of the gaps between eigenvalues and the $n$-point correlation function of the eigenvalues from many real-symmetric random matrix ensembles are often the same as the Gaussian orthogonal ensemble. This observation is formalized by the Wigner-Dyson-Gaudin-Mehta conjecture, or bulk universality conjecture, that states that the local statistics of Wigner matrices are universal in the sense that they depend only on the symmetry class of the matrix, but are otherwise independent of the details of the distribution of the matrix entries. This conjecture for all symmetry classes has been established in a series of papers \cite{Erdos2009b, Erdos2012a, Erdos2011, Erdos2012b, Erdos2013a, Erdos2012c} After this work began, parallel results were obtained in certain cases in \cite{Tao2011a, Tao2010}.

Edge statistics often exhibit universality too. The extremal eigenvalues are studied as a point process \cite{Anderson2010} and typical results involve the almost sure location that the largest eigenvalue converges to and the fluctuations about this location. Often these fluctuations, which are independent of the details of the model and depend only on the symmetry class, fall into the Tracy-Widom universality class. 

Returning to random graphs, for the original Erd\H{o}s-R\'{e}nyi model, which is distinguished from Wigner matrices by its sparsity and discreetness, much has been proved. The degree distribution for the Erd\H{o}s-R\'{e}nyi model concentrates about $Np$ with small fluctuations (so long as $p$ is not too small) and the global distribution is given by the Wigner semicircle. A strong local law at the optimal scale for the density of the eigenvalues is known, so long as $pN\gg (\log N)$, along with universality of both the bulk and edge statistics, when $pN\gg N^{2/3}$ \cite{Erdos2012a, Erdos2013b}. Recently, universality was proved to hold down to the scale $pN\gg N^\e$ in \cite{Landon2015}.  

However, when the degree distribution is given by a power-law, the global distribution of the eigenvalues does not follow the Wigner semicircle---instead it follows a power-law \cite{Rodgers2006}. This can be seen empirically in real-world data and in various models that produce scale-free random graphs. To state more precise results, we must focus on a model: in the Chung-Lu random graph model \cite{Chung2002, Chung2003}, the expected degree sequence of the graph $(d_{1},\ldots,d_{N})$ on $N$ vertices is specified and an undirected edge is added between vertices $i$ and $j$ independently with probability $p_{ij}\deq d_{i}d_{j}/D$, where $D\deq \sum_{k}d_{k}$. Many different degree distributions can be produced by the Chung-Lu model, including power-laws with an exponent $\beta>3$. For general degree sequences, the almost sure location of the largest eigenvalue is known \cite{Chung2003} and the global law of the spectrum is obtained as the solution of a functional equation for its Stieltjes transform (see \eqref{eq: sce general}) , which was derived in \cite{Rodgers2006} using the replica method. 

The distinguishing feature of the Chung-Lu model is that the variance matrix has low-rank. Motivated by this (see Section \ref{sec: examples}), we study the following more general model, where $H=(h_{ij})$ is a real symmetric $N\times N$ random matrix. The entries of $H$ are independent (up to the symmetry constraint), have mean zero, and variance matrix $S=(s_{ij})$ with low-rank. The matrix $H$ can also be sparse and we parameterize the sparseness with $q=N^{\kappa}$ for $\kappa\in (0,1]$, which we point out is different from the parameter in \cite{Erdos2012a, Erdos2013b}. The moments of the entries decay as $\E \absa{h_{ij}}^{k}\le s_{ij}N^{-1}q^{1-k/2}$ for $k\geq 3$. Just like the Chung-Lu model, the global law of the spectrum is obtained as the solution of a functional equation for its Stieltjes transform (see Section~\ref{sec: solution}). We derive this self-consistent system in Section \ref{sec: proof of local law}. In Section \ref{sec: solution}, we prove existence and uniqueness of the solution using the Brouwer fixed-point theorem and an elementary argument. Additionally, we prove the regularity and stability of the solution and provide its asymptotics. 

For this ensemble, we prove a local law in the bulk at the optimal scale $\eta\gg N^{-1+\e}$. The result is parallel to that in \cite{Erdos2013b}.  The local law enables us to prove bulk universality, using a recent result in \cite{Landon2015a}.  Our bulk universality result is parallel to that in \cite{Landon2015}, and is proved using the same technique.  We point out that in the non-sparse case, that is $q=N$, our results are contained in the results by Ajanki, Erd\H{o}s, and Kr\H{u}ger \cite{Ajanki2015}.  The novelties in our paper are: we introduce sparsity into the ensemble; we analyze a different equation \eqref{eq: sce general} than the quadratic vector equation (\textsc{qve}) introduced in \cite{Ajanki2015a}, in order to handle possible singularities in $s_{ij}$. 

In the appendix, we give a new proof to the existence and uniqueness of solution the \textsc{qve}.  We also study a general sparse ensemble, where we no longer require $s_{ij}$ to be low-ranked, but instead assume it to be flat in the sense that $s_{ij}\leq C/N$.  We state and sketch the proof of a local law and bulk universality of this ensemble.  As mentioned above, Ajanki, Erd\H{o}s and Kr\H{u}ger \cite{Ajanki2015} have earlier proved the local law and bulk universality in the non-sparse setting ($q=N$).  

The layout of the paper is as follows: In Section \ref{sec: results}, we define the random-sign model, introduce some basic definitions, and precisely state our main results. In Section \ref{sec: examples}, we introduce some specific ways to produce degree sequences from a distribution $\pi$ that satisfy our assumptions. In Section \ref{sec: proof of local law}, we prove Theorem \ref{thm: local law}, the local law. Section \ref{sec: solution} is devoted to proving the existence, uniqueness, and regularity of the solution to \eqref{eq: sce general}.  In Section \ref{sec: universality}, we use a Green's function comparison argument and a result of \cite{Landon2015a} to prove the universality in the bulk for $q \gg N^{\e}$. In Section \ref{sec: 01 model}, we consider a different model, where the entries only take the values $0$ and $1$.  We sketch the proof for the same local law and bulk universality for this model under the assumption that $q\ll N^{1/2}$.  Appendix \ref{sec: general sce} is devoted to proving the existence, uniqueness, and regularity of the solution to a general self-consistent system. In Appendix \ref{sec: general local law}, we state the local law and bulk universality for a general matrix ensemble whose limiting density is given by the self-consistent equations in Section \ref{sec: general sce}.

\section{Definition and main results} \label{sec: results}

In the Chung-Lu random graph model \cite{Chung2002, Chung2003}, the expected degree sequence of the graph $(d_{1},\ldots,d_{N})$ on $N$ vertices is specified and an undirected edge is added between vertices $i$ and $j$ independently with probability
\eq{\label{eq: def of pij}
	p_{ij}\deq \frac{d_{i}d_{j}}{D}\,,
}
where $D\deq \sum_{k}d_{k}$. Note that the probabilities $p_{ij}$ are only defined if $\max_{i}d^{2}_{i}\leq D$. Obviously, the exact degree of a vertex $i$ is random, but its expected degree is exactly $d_{i}$. Under a particular choice of the expected degree sequence, which we specify in Section \ref{sec: examples}, the model can produce a scale-free graph for any $\beta>3$. Moreover, the original Erd\H{o}s-R\'{e}nyi model is recovered as a special case when the probabilities $p_{ij}$ do not depend on $i$ or $j$. In fact, under some mild assumptions on the expected degree sequence, a wide class of degree distributions can be produced by this model. Often the most interesting choices for $p_{ij}$ lead to sparse graphs.

To study the spectral properties of the adjacency matrices, denoted by $A_{N}$, of random graphs from the Chung-Lu model, we must center and rescale their entries; this produces a symmetric $N\times N$ matrix model $H_{N}\equiv \pa{h_{ij}}_{i,j=1}^{N}$. The spectral properties cannot be studied if the graph is too sparse. We parameterize the sparseness with $q=N^{\kappa}\deq D/N$ for $\kappa\in(0,1]$, which we point out is different from the parameter in \cite{Erdos2012a, Erdos2013b}---in fact, $q_{\textsc{here}}=q_{\textsc{there}}^{2}$. The parameter $q$ is the order of the average degree of the graph and note that we require that $\kappa$ is positive. There are two ways to transform the entries to have expectation zero. The entries can be centered by subtracting their mean. Alternatively, each entry can be multiplied by an independent random sign (up to the symmetry constraint). Next, to ensure the density of the eigenvalues of $A_{N}$ converges to a distribution of order one, we rescaled by $q^{-1/2}$.

Centering and rescaling with the transformation $h_{ij}=(a_{ij}-p_{ij})/\sqrt{q}$ leads to independent entries (up to the symmetry constraint) that have the following distribution
\eq{\label{eq: without random signs}
	\P\qa{ h_{ij} = \pa{ 1-p_{ij}}/\sqrt{q}\; }   = p_{ij}  \quad\text{and}\quad  \P\qa{ h_{ij}=-  p_{ij}/\sqrt{q}\; } = 1-p_{ij} \,.
}
The random sign approach with the transformation $h_{ij}\deq s_{ij}a_{ij}/\sqrt{q}$ (where $s_{ij}$ are independent random signs) again leads to independent entries, but they have the distribution
\eq{
	\P\qa{ h_{ij} = \pm 1/\sqrt{q}\; } = p_{ij}/2 \quad\text{and}\quad  \P\qa{ h_{ij}=0 } = 1-p_{ij}\,.
}
The first matrix model has the variance matrix
\eq{
	S=\pa{ \frac{d_{i}d_{j}\pa{D-d_{i}d_{j}} }{qD^{2} }},
}
which has rank two, and the second has the variance matrix
\eq{
	S=\pa{ \frac{d_{i}d_{j}}{qD} },
}
which has rank one.

Thus, we are lead to the following generalized ensemble, which includes the above matrix models as examples. Denote $\N_N\deq \{1,\dots, N\}$. We introduce an ensemble of $N\times N$ symmetric random matrices ${H^{(N)}=\p{h_{ij}^{(N)}}_{i,j\in\N_{N}}}$. The parameter $N$ is large and we often omit explicitly indicating the dependence of various quantities on $N$. As discussed in Section \ref{sec: introduction}, we motivate the ensemble by considering the adjacency matrices of random graphs from the Chung-Lu model, where edges are added independently between two of the $N$ vertices, $i$ and $j$, with varying probabilities $p_{ij}$ (see Equation \eqref{eq: def of pij}).

Let $s_{ij}=\E\absa{h_{ij}}^{2}$.  We assume that 
\eq{\label{eq: moment assumptions}
	\E h_{ij}=0\quad\text{and}\quad\E \absa{h_{ij}}^k \leq \frac{s_{ij}}{N q^{k/2-1}}
}
for fixed $k\in\N$. We assume that the variance matrix $S=(s_{ij})_{i,j=1}^N$ is of low-rank and has the form 
\eq{\label{eq: s}
	s_{ij}=\frac{1}{N}\sum_{k=1}^r \gamma^{(k)}_i \gamma^{(k)}_j \, ,
}
where $r$ is a fixed positive integer and $(\gamma^{(k)}_i)_{k\in \N_r,i\in \N_N}$ satisfy the following assumptions:
	\begin{enumerate}
		\item $\gamma^{(k)}_i\geq 1$ for all $k\in \N_r$ and $i\in \N_N$;
		\item there is an $M>0$ (not dependent on $N$) such that $\frac{1}{N} \sum _{k,i} (\gamma^{(k)}_i)^2 \leq M\,.$
	\end{enumerate}
We also require that 
\eq{
	qs_{ij}\leq 1\,,
}
so that the matrix can be realized through the adjacency matrix of a random graph.
\begin{remark}
It is possible that $\max_{i,j}s_{ij}\rightarrow \infty$ as $N\rightarrow \infty$, as shown in Subsection \ref{sec: power law}.  In the non-sparse case $q=N$, this is impossible, since $qs_{ij}\leq 1$ implies $s_{ij}\leq 1/N$.
\end{remark}
We denote 
\eq{\label{def: theta}
	\theta_i\deq \sum_k^r \gamma_i^{(k)}\,.
}
\begin{definition}
	If $H=(h_{ij})_{i,j\in\N_{N}}$ satisfies the conditions above, we call $H$ a generalized Chung-Lu ensemble.
\end{definition}

We are going to analyze the behavior of the resolvent of $H$, that is,
\eq{\label{def: resolvent}
	G(z)=(H-z)^{-1}
}
for $z=E+\ii\eta\in\C^+$.

We define the Stieltjes transform of a measure $\rho$ as $m_{\rho}(z)\deq\int_\R \frac{\rho(\dd x)}{x-z}$. Denote the eigenvalues of $H$, ordered in increasing size, by $\pa{\la_{1},\ldots, \la_{N}}$. We are interested in the asymptotic spectral statistics of $H$, so we consider the empirical spectral distribution of $H$, defined as $\mu_{N} \deq {\frac{1}{N}\sum_i\delta_{\lambda_i}}$, and its Stieltjes transform
\eq{
	m_N(z) = \int_\R \frac{ \mu_N (\dd x)}{x-z} =\frac{1}{N} \sum_i \frac{1}{\lambda_I-z}\,.
}
In terms of the resolvent \eqref{def: resolvent}, $m_N$ can be written
\eq{
	m_N(z)= \frac{1}{N}\sum_ i^N G_{ii}\,.
}
It turns out that (see Section \ref{sec: proof of local law} for a derivation), the limiting behavior of $m_{N}$ is given by the limiting behavior of the unique solution of the following system:
\eq{\label{eq: sce general}
	\begin{split}
		m(z) &= -\frac{1}{N}\sum_i^N \frac{1}{z+\sum_{j=1}^r\gamma^{(j)}_i u^{(j)}(z) }\,\\
		u^{(k)}(z) &= -\frac{1}{N}\sum_i^N \frac{\gamma^{(k)}_i}{z+\sum_{j=1}^r \gamma^{(j)}_i u^{(j)}(z) }
	\end{split}
}
for $k\in\N_{r}$.  

We denote 
\eq{
	g_i=- \frac{1}{z+\sum_{j=1}^r\gamma^{(j)}_i u^{(j)}(z) }\,.
}

Thus, to show the self-consistent system \eqref{eq: sce general} gives the correct limiting behavior, our goal is to estimate the family of quantities defined below.
\begin{notation}\label{not: parameters}
Define the $z$-dependent quantities
\eq{
	\Lad\deq \max_i  \theta_i \absa{{G_{ii}-g_i }} \,,
}
\eq{
	\Lao\deq \max_{i\neq j}\absa{\sqrt{\theta_i\theta_j}G_{ij}}\,,
}
and 
\eq{\label{definition of Lambda}
	\Lambda\deq\max\{\Lad,\Lao\}\,.
}
We also set the control parameter
\eq{\label{def: Phi}
	\Phi\deq \frac{1}{\sqrt{q}}+\frac{1}{\sqrt{N\eta}}\,.
}
\end{notation}

The existence and uniqueness of a solution to \eqref{eq: sce general} will be proved in Section \ref{sec: solution}.  Throughtout this paper, we denote $(u^{(k)})_{k\in\N_{r}}$ and $m$ to be the solution to $\eqref{eq: sce general}$ without specification.  Our main theorem concerns the asymptotic behavior of eigenvalues of $H$ in the bulk of the spectrum, that is, where the asymptotic distribution of the spectrum has a positive density.  To make this precise, we introduce the following definition.

 \begin{definition}[Bulk interval] \label{def: bulk interval}
Assume that $H$ is a generalized Chung-Lu ensemble.  Let $(u^{(k)})_{k\in\N_{r}}$ and $m$ to be the solution to $\eqref{eq: sce general}$.  We call a bounded interval $I\subset \R$ a \emph{bulk interval} if there is a positive $c_I$ which does not depend on $N$ such that $\im m(z)>c_I$ on ${\{E+\ii\eta:E \in I, 0<\eta \leq 1\}}$. 
\end{definition}

Such an interval might not exists for an arbitrary generalized Chung-Lu ensemble, but it does exist in many cases of interest, for example, when $(\gamma^{(k)}_i)_{k\in \N_r,i\in \N_N}$ has a limit distribution, see Section \ref{sec: examples}.

Before stating our main theorem concerning the matrix model $H$, we need to introduce some notation. We follow the conventions of \cite{Erdos2013a}.

\begin{definition}[Stochastic domination]\label{def: stochastic domination}
Let 
\eq{
	X=\pa{X^{(N)}(u): N\in \N, \, u\in U^{(N)}} \quad\text{and}\quad Y=\pa{Y^{(N)}(u): N\in \N, \, u\in U^{(N)}}
}
be two families of random variables with $Y$ nonnegative. We say that $X$ is stochastically dominated by $Y$, uniformly in $u$, if for all positive constants $\e$ and $D$, we have 
\eq{
	\sup_{u\in U^{(N)} }\P [\absa{X(u)}>N^\e Y(u)] \leq N^{-D}
}
for large enough $N$.  Moreover, we denote this by $X\prec Y$.
\end{definition} 
Now we state our main theorem below. We point out that Theorem 1.6 of \cite{Ajanki2015} contains the non-sparse case of our theorem, that is when the sparse parameter $q=N$.  In fact, their theorem translated to this special case is much more general, since it not only deals with the bulk, but the edges as well.
\begin{theorem}[The local law]\label{thm: local law}
Let $I$ be a bulk interval (as in Defintion \ref{def: bulk interval}). Fix positive $\delta $ and define an $N$-dependent spectral domain 
\eq{
	\cal{D}_\delta^I\deq \ha{ E+\ii\eta: E\in I, N^{\delta-1}\leq \eta \leq10}\,.
}
Then, on the domain $\cal{D}_\delta^I$, we have $\Lambda\prec \Phi$.
\end{theorem}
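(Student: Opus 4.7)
The plan is to follow the standard three-stage route for local laws (approximate self-consistent equation, stability, bootstrap in $\eta$), suitably adapted to exploit the low-rank structure of $S$ and the sparse moment bound \eqref{eq: moment assumptions}.

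First, I would derive an approximate version of the self-consistent system \eqref{eq: sce general} for the actual resolvent entries $G_{ii}(z)$. The Schur complement formula yields
\eqs{
	\frac{1}{G_{ii}} = h_{ii} - z - \sum_{k,l \ne i} h_{ik} G^{(i)}_{kl} h_{il},
}
where $G^{(i)}$ is the resolvent of the minor with row/column $i$ removed. Using the low-rank decomposition $s_{ij} = \frac{1}{N}\sum_{k=1}^r \gamma^{(k)}_i\gamma^{(k)}_j$, the expectation of the quadratic form is $\sum_{j\ne i} s_{ij} G^{(i)}_{jj} = \sum_k \gamma^{(k)}_i \cdot \frac{1}{N}\sum_{j\ne i}\gamma^{(k)}_j G^{(i)}_{jj}$. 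Introducing the empirical analogues $u^{(k)}_N(z) \deq -\frac{1}{N}\sum_j \gamma^{(k)}_j G_{jj}(z)$, this should give
\eqs{
	G_{ii} = \frac{-1}{z + \sum_{k=1}^r \gamma^{(k)}_i u^{(k)}_N(z) } + (\text{error}_i).
}
The error term has three sources: (i) replacing $h_{ik} G^{(i)}_{kl} h_{il}$ by its expectation (a standard large deviation estimate for sparse random vectors using \eqref{eq: moment assumptions}, producing an error of order $\Phi$ times $\theta_i$-dependent prefactors after accounting for the $\gamma^{(k)}$-weights), (ii) the $h_{ii}$ term, and (iii) the comparison between $G^{(i)}_{jj}$ and $G_{jj}$ (controlled by off-diagonal entries via the resolvent identity $G^{(i)}_{jj} = G_{jj} - G_{ji}G_{ij}/G_{ii}$). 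For the off-diagonal entries $G_{ij}$ with $i\ne j$, a parallel Schur expansion combined with large deviation bounds yields $G_{ij} \prec \sqrt{s_{ij}}\,\Phi \cdot (\text{polynomial in } \Lambda)$, which after accounting for the $\sqrt{\theta_i\theta_j}$ weight gives control by $\Phi$.

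Next comes the stability argument. Writing the approximate equation in terms of the weighted variables $\ol{u}^{(k)}_N$, the deviations from the true solution $(u^{(k)}, m)$ should satisfy a linearized system whose matrix is a perturbation of $I - J$ for the Jacobian $J$ of the fixed-point map \eqref{eq: sce general}. On a bulk interval, the assumption $\Im m(z) > c_I$ forces $\Im g_i$ to be uniformly bounded away from zero by the regularity results of Section \ref{sec: solution}, which in turn gives a quantitative spectral gap for $I - J$. Thus small errors in the approximate self-consistent equation translate into correspondingly small bounds on $\Lambda_{\textsc{d}}$, yielding an inequality roughly of the form $\Lambda \prec \Phi + \Lambda^2$ (the quadratic coming from expanding the Schur denominator and from the $G^{(i)}_{jj}-G_{jj}$ correction).

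Finally, I would run a bootstrap/continuity argument in $\eta$ on the grid $\cal{D}_\delta^I$. At $\eta \sim 10$, $\Lambda$ is trivially $O(1/\sqrt{N})$ by deterministic bounds on the resolvent. Assuming inductively that $\Lambda \le N^{-c}$ on a given slice, the self-improving inequality $\Lambda \prec \Phi + \Lambda^2$ bootstraps to $\Lambda \prec \Phi$, because the quadratic term is absorbed when $\Lambda$ is already below the fixed-point of $x = \Phi + x^2$. Decreasing $\eta$ in steps of $N^{-3}$ and using Lipschitz continuity of $G(z)$ in $\eta$ (so that the event on the grid implies the event on the whole domain) closes the induction down to $\eta \ge N^{\delta - 1}$.

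The main obstacle is the control of the large deviation error under the sparse moment assumption \eqref{eq: moment assumptions} in the presence of potentially diverging $s_{ij}$ (and hence diverging $\theta_i$). The usual polynomial moment/Markov inequality must be applied carefully with the correct $\theta_i,\theta_j$ weights so that the off-diagonal estimate produces exactly $\sqrt{\theta_i\theta_j}|G_{ij}| \prec \Phi$ rather than a bound with extra divergent factors; this is what dictates the definitions of $\Lambda_{\textsc{d}}$ and $\Lambda_{\textsc{o}}$ in Notation \ref{not: parameters}. Similarly, the stability step is delicate because the linearization is not self-adjoint and its spectral radius must be bounded using the low-rank structure together with $\Im m > c_I$ uniformly on $\cal{D}_\delta^I$; the results of Section \ref{sec: solution} on regularity of the solution are exactly what supplies this.
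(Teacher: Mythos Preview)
Your overall strategy matches the paper's: derive an approximate self-consistent equation via the Schur complement, control the error with large deviation bounds adapted to the sparse moment assumption, invoke stability of the fixed point (the paper's Theorem~\ref{thm: stability}), and run a continuity/bootstrap argument in $\eta$. The paper packages this as a ``forbidden region'' lemma (Lemma~\ref{lem: forbidden}: if $\phi\Lambda\prec N^{-c}$ on some event then $\phi\Lambda\prec\Phi$), an initial estimate at $\eta=10$ (Lemma~\ref{lem: initial estimate}), and a lattice continuity argument; your self-improving inequality $\Lambda\prec\Phi+\Lambda^2$ is morally the same mechanism.

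There is, however, one genuine gap. You write that at $\eta\sim 10$, ``$\Lambda$ is trivially $O(1/\sqrt{N})$ by deterministic bounds on the resolvent.'' That is not true in this model. The deterministic bound $|G_{ii}|\le 1/\eta$ only gives $\theta_i|G_{ii}-g_i|=O(\theta_i)$, and $\theta_i$ is \emph{not} assumed bounded (indeed it may diverge, see Section~\ref{sec: power law}). So the bootstrap has no starting point from deterministic bounds alone. The paper's Lemma~\ref{lem: initial estimate} supplies the missing argument: first it uses the Schur complement to prove a \emph{lower} bound $\im G_{kk}^{(i)}\succ \eta/\theta_k^2$, then feeds this back into the Schur complement for $G_{ii}$ to obtain $|G_{ii}|\prec 1/\theta_i$ directly at $\eta=10$, without any a priori control on $\Lambda$. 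Only after this $\theta_i$-weighted bound on the diagonal is in hand do the large deviation estimate for $R_i$ and the stability result (Theorem~\ref{thm: stability initial}, which, unlike Theorem~\ref{thm: stability}, does not require an a priori smallness of the error) combine to give $\Lambda\prec\Phi$ on the initial line. You correctly flag diverging $\theta_i$ as the main obstacle in your final paragraph, but its first and most serious appearance is precisely at this initialization step, not only inside the large deviation bounds.
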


As an application of the local law, we have bulk universality in terms of the n-point correlation functions of eigenvalues.  The non-sparse $q=N$ was done in Theorem 1.15 of \cite{Ajanki2015}.

Let $\rho^{(n)}$ be the n-point correlation functions of the eigenvalues of $H$ and $\rho^{(N)}$ be the density on $I$. We denote $\rho_{GOE}^{(n)}$ to be the n-point correlation function of eigenvalues of a $GOE$ and $\rho_{sc}$ to be the density of the semicircle law:
\be
	\rho_{sc}(E)= \frac{\sqrt{[4-E^2]_+}}{2\pi}\,.
\ee

\begin{theorem}[Bulk universality]\label{thm: bulk universality}
Let $O\in C^\infty_0(\R^n)$ be a test function.  Let $I$ be a bulk interval. Fix a parameter $b=N^{c-1}$ for arbitrarily small $c$. We have,
\eq{\begin{split}
	\lim_{N\rightarrow \infty} \int_{E-b}^{E+b} \int_{\R^n} O(\alpha_1,\dots,\alpha_n) \left[ \frac{1}{\rho(E)^n}\rho^{(n)} \left( E'+\frac{\alpha_1}{N\rho(E)},\dots,E'+\frac{\alpha_n}{N\rho(E)}\right)\right.\\\left.
	-\frac{1}{(\rho_{sc}(E))^n} \rho_{GOE}^{(n)} \left( E''+\frac{\alpha_1}{\rho_{sc}(E)},\dots,E''+\frac{\alpha_n}{\rho_{sc}(E)}\right)\right]\dd \alpha_1\dots\dd\alpha_n \frac{\dd E'}{2b} =0
	\end{split}
}
for any $E''\in (-2,2)$.
\end{theorem}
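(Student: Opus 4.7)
The plan is to follow the standard three-step strategy for bulk universality of Wigner-type matrices. Step one, the optimal local law and eigenvalue rigidity for $H$ in any bulk interval $I$, is already furnished by Theorem \ref{thm: local law}. What remains is (i) to prove universality for a short-time Gaussian perturbation of $H$ via Dyson Brownian motion, and (ii) to compare this perturbation with $H$ itself via a Green's function comparison.

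For step (i), I would introduce the matrix Ornstein--Uhlenbeck process
\[
H_t \deq e^{-t/2}\, H + \sqrt{1-e^{-t}}\, U,
\]
where $U$ is an independent GOE matrix, and fix $t \deq N^{-1+\omega}$ for some small $\omega > 0$. The eigenvalues of $H_t$ evolve under $\beta = 1$ Dyson Brownian motion with initial data the eigenvalues of $H$. The hypotheses of the DBM convergence result \cite{Landon2015a} are precisely the optimal-scale local law and rigidity of the initial matrix, both of which are supplied by Theorem \ref{thm: local law}. Applying that result, the averaged $n$-point correlation functions of $H_t$ at energy $E$ rescaled by the local density $\rho(E)$ match those of the GOE, rescaled by $\rho_{sc}(E)$ at the reference energy $E''\in(-2,2)$, after averaging over a window of size $b \ge N^{c-1}$.

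For step (ii), I would perform a Green's function comparison between $H_0 = H$ and $H_t$. Writing the $n$-point correlation functions as integrals of products of imaginary parts of smeared resolvent entries at scale $\eta = N^{-1-\omega'}$ for some $0<\omega'<\omega$, the expectation under $H_s$ is differentiable in $s \in [0,t]$ and, via Itô's formula (equivalently, a Lindeberg swap along the flow), the derivative reduces to a sum of terms weighted by moments $\E |h_{ij}|^k$ for $k = 3, 4$. The sparseness assumption $\E|h_{ij}|^k \le s_{ij}/(Nq^{k/2-1})$ together with the resolvent bounds $\Lao \prec \Phi$ and $\Lad \prec \Phi$ from Theorem \ref{thm: local law} yields a net error that vanishes whenever $q \gg N^\e$, provided $\omega$ and $\omega'$ are chosen sufficiently small. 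Combining steps (i) and (ii) gives the asserted convergence of averaged correlation functions.

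The main obstacle is the low-rank, inhomogeneous variance structure. Unlike in the Wigner case where $s_{ij}$ is uniformly of order $1/N$, Theorem \ref{thm: local law} only controls $G_{ii}-g_i$ and $G_{ij}$ after multiplication by $\theta_i$ and $\sqrt{\theta_i\theta_j}$ respectively, and the $\theta_i$ may diverge with $N$. The swap expansion must combine these weighted resolvent bounds with the correspondingly inhomogeneous moment bounds $s_{ij}/(Nq^{k/2-1})$, and the summations over $i,j$ have to be closed using assumption (ii) on the vectors $\gamma^{(k)}$ (namely $N^{-1}\sum_{k,i}(\gamma_i^{(k)})^2 \le M$). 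A secondary point is to verify that $H_t$ still satisfies a local law whose asymptotic density is close enough to $\rho$ on the scale $b$, so that the local rescalings on the two sides of the comparison genuinely match.
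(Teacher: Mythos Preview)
Your three-step outline matches the paper's strategy. Two implementation details differ. First, the paper runs a \emph{variance-preserving} flow $\dd h_{ij}=\dd B_{ij}/\sqrt{N}-h_{ij}\,\dd t/(2Ns_{ij})$ rather than the standard OU process; this keeps $H_t$ in the original ensemble so the local law for $H_t$ (Theorem~\ref{thm: local law Ht}) is literally the same statement as Theorem~\ref{thm: local law}, but $H_T$ is then not directly of the form $V+\sqrt{T}\,W$, so the paper uses $s_{ij}\ge r/N$ (a consequence of $\gamma_i^{(k)}\ge 1$) to split off a genuine GOE piece $H_T=\hat H+c\sqrt{T}\,G$ and invokes \cite{Landon2015a} with $V=\hat H$. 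Your standard flow hands you the GOE component for free but perturbs the variance profile to $e^{-s}s_{ij}+(1-e^{-s})/N$, which is exactly the secondary point you flag. Second, the obstacle you anticipate---tracking the $\theta_i$ weights through the Green's function comparison---is circumvented in the paper rather than confronted: from $\theta_i|G_{ii}-g_i|\prec\Phi$ and $|g_i|\lesssim 1/\theta_i$ one gets $\im G_{ii}\prec 1$ uniformly in $i$, hence bulk eigenvector delocalization $|u_\alpha^i|^2\le N^{-1+\delta}$, and from this (via a dyadic decomposition in the spectrum) a \emph{uniform} bound $|G_{jk}(E+\ii\eta)|\le N^{C\delta}$ for $\eta\ge N^{-1-\delta}$. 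Since the third-moment input to the swap lemma is $Ns_{ij}^{-1}\E|h_{ij}|^3\le q^{-1/2}$, in which $s_{ij}$ cancels exactly, neither the resolvent bounds nor the moment bounds in the comparison carry any $\theta_i$ dependence.
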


\section{Examples of interest}\label{sec: examples}

In this section, we give specific examples of models that motivate the ensemble we defined in Section \ref{sec: results}. In particular, we consider the cases where ${(\gamma^{(k)}_i)_{k\in \N_r,i\in \N_N}}$ converges to a limit (in a sense to be made precise) and when the random matrix ensemble is given by the adjacency matrix of a random graph. We show that these models satisfy the assumptions of our theorems; note that Theorems \ref{thm: local law} and \ref{thm: bulk universality} require that there exists a bulk interval, which is any interval where the spectral density has a lower bound independent of $N$.   

\subsection{When $(\gamma^{(k)}_i)$ has a limit} \label{subsec: limit} For $k\in\N_{r}$, we define
\eq{
	f_k^{(N)}(x)\deq \sum_ i ^N 1_{[\frac{i-1}{N},\frac{i}{N})} \gamma^{(k)}_i\,.
}
If $f_k^N\rightarrow f_k$ in some sense, we expect the following equations from \eqref{eq: sce general}:
\eq{\label{eq: limit sce}
	\begin{split}
	u_k &=-\int_0^1 \frac{f_k(x)\dd x}{z+\sum_{k=1}^r f_k(x)u_k}\\
	m_0&=-\int_0^1 \frac{\dd x}{z+\sum_{k=1}^r f_k(x)u_k}
	\end{split}\,,
}
where
\eq{
	\inf_{x\in[0,1]}f_k(x)\geq 1\quad\text{and} \int_0^1 f_k(x)^2 \dd x \leq M\,.
}
We note that this system of equations for $k=1$ has been derived before in \cite{Rodgers2006} using the replica method. For \eqref{eq: limit sce}, which is the limiting, integral version of \eqref{eq: sce general}, we can prove uniqueness, existence, and holomorphicity of the solution. 
\begin{theorem}\label{thm: limit solution}
For any $z\in \C^+$, there exists a unique $(u_1, \dots, u_r, m_0)$ satisfying equations \eqref{eq: limit sce}. In addition, the function $(u_1, \dots, u_r, m_0)$ is holomorphic in $z$.
\end{theorem}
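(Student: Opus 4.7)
The plan is to recast \eqref{eq: limit sce} as a fixed-point problem for the map $\Phi\colon(\overline{\C^+})^r\to\C^r$ with components
\[
	\Phi_k(u) \deq -\int_0^1 \frac{f_k(x)\,\dd x}{z + \sum_{j=1}^r f_j(x) u_j}, \qquad k\in \N_r,
\]
and to treat existence, uniqueness, and holomorphicity of the $u$-component separately; then $m_0$ is defined by the second line of \eqref{eq: limit sce} and inherits all three properties from $u(z)$.

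For existence, I would apply the Brouwer fixed-point theorem to the compact convex set
\[
	K \deq \hb{u \in \C^r \st \im u_k \geq 0 \text{ and } \abs{u_k} \leq \norm{f_k}_{L^1}/\eta \text{ for all } k\in\N_r},
\]
with $\eta \deq \im z > 0$. Because $f_k \geq 1$, on $K$ the denominator $z + \sum_j f_j(x) u_j$ has imaginary part at least $\eta$, so $\Phi$ is well-defined and continuous. Elementary estimates yield $\abs{\Phi_k(u)} \leq \norm{f_k}_{L^1}/\eta$ and $\im \Phi_k(u) \geq \eta \int_0^1 f_k(x)/\abs{z+\sum_j f_j(x) u_j}^2 \,\dd x > 0$, the integrand being strictly positive almost everywhere since $\sum_j f_j \in L^2(0,1)$. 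Hence $\Phi(K)\subseteq K$, and Brouwer produces a fixed point, which by the strict positivity of the imaginary part actually lies in $(\C^+)^r$.

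Uniqueness is the main step. Let $u,v\in(\C^+)^r$ be two solutions, and set $a(x)\deq \sum_j f_j(x) u_j$, $b(x)\deq \sum_j f_j(x) v_j$, and $w\deq u-v$. Subtracting the two equations gives the linear system $w = Mw$ with $M_{kj}\deq \int_0^1 f_k(x) f_j(x)/((z+a(x))(z+b(x)))\,\dd x$. By Cauchy--Schwarz, $\abs{M_{kj}}$ is dominated by the entrywise nonnegative matrix $W_{kj}\deq (P^{(u)}_{kj} P^{(v)}_{kj})^{1/2}$, where $P^{(u)}_{kj}\deq \int_0^1 f_k f_j/\abs{z+a}^2\,\dd x$ and analogously for $P^{(v)}$. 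Taking the imaginary part in the equation for $u_k$ yields the key identity
\[
	\im u_k - \sum_j P^{(u)}_{kj} \im u_j = \eta \int_0^1 \frac{f_k(x)\,\dd x}{\abs{z+a(x)}^2} > 0,
\]
and similarly for $v$. Testing against the positive vector $\gamma_k \deq (\im u_k \cdot \im v_k)^{1/2}$ and applying Cauchy--Schwarz once more produces $W\gamma < \gamma$ strictly componentwise; the Collatz--Wielandt formula then gives $\rho(W) < 1$, hence $\rho(M) \leq \rho(W) < 1$, so $I-M$ is invertible and $w=0$.

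For holomorphicity, the same Perron--Frobenius-type bound applied to the complex Jacobian $A_{kj}\deq \int_0^1 f_k f_j/(z+a)^2\,\dd x$ of $u\mapsto \Phi(u)-u$ (note $\abs{A_{kj}}\leq P^{(u)}_{kj}$) shows that $I-A$ is invertible at the solution. The integrand defining $\Phi$ is jointly holomorphic on $\C^+\times(\C^+)^r$, so $\Phi$ is holomorphic there by differentiation under the integral; the holomorphic implicit function theorem now yields a local holomorphic branch $z\mapsto u(z)$, and global uniqueness glues these local pieces into a function holomorphic on all of $\C^+$. The main obstacle is the uniqueness step---a naive contraction argument fails for small $\eta$---but the Perron--Frobenius structure above is robust and uniform in $z\in\C^+$, and the same argument simultaneously delivers the Jacobian invertibility needed for the implicit function theorem.
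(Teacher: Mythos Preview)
Your proposal is correct and follows essentially the same route as the paper: the paper's own proof of Theorem~\ref{thm: limit solution} is simply ``a trivial modification of that of Theorem~\ref{thm: solution}'', and Theorem~\ref{thm: solution} is proved by Brouwer for existence and a Perron--Frobenius/Collatz--Wielandt argument on the imaginary parts for uniqueness, exactly as you do.

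Two small differences are worth recording. In the uniqueness step the paper bounds the mixed matrix $\tilde T$ by the \emph{arithmetic} mean $\tfrac12(T+T')$ and uses $r(T),r(T')\le 1$ (symmetry of $T,T'$ giving the norm bound), whereas you bound by the \emph{geometric} mean $W_{kj}=(P^{(u)}_{kj}P^{(v)}_{kj})^{1/2}$ and test against $\gamma_k=(\im u_k\,\im v_k)^{1/2}$; both are one-line Cauchy--Schwarz computations, and your version has the minor advantage that strictness comes directly from $\eta>0$ rather than from the assumed distinctness of the two solutions. Second, the paper asserts holomorphicity without writing out the argument; your use of the holomorphic implicit function theorem, with Jacobian invertibility obtained from the very same spectral-radius bound, is the natural completion and is correct.
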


\begin{proof}
	The proof is a trivial modification of that of Theorem \ref{thm: solution}.
\end{proof}

\begin{theorem}
	If $f_k^N \rightarrow f$ in $L^2([0,1])$, then any bounded closed interval $I\subset \R$ on which $\im m_0>0$ is a bulk interval.
\end{theorem}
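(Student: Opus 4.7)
The plan is to transfer the hypothesis $\im m_0 > 0$ from the limit equation \eqref{eq: limit sce} to the finite-$N$ solution $m$ of \eqref{eq: sce general} by proving $L^\infty$-convergence $m \to m_0$ on the closed bulk domain.

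The first observation is that, since $\gamma^{(k)}_i$ is the value of $f_k^{(N)}$ on $[\tfrac{i-1}{N}, \tfrac{i}{N})$, the finite-$N$ system \eqref{eq: sce general} rewrites in integral form as
\begin{equation*}
u^{(k)}(z) = -\int_0^1 \frac{f_k^{(N)}(x)\,\dd x}{z + \sum_j f_j^{(N)}(x)\,u^{(j)}(z)}, \qquad
m(z) = -\int_0^1 \frac{\dd x}{z + \sum_j f_j^{(N)}(x)\,u^{(j)}(z)},
\end{equation*}
so it is exactly \eqref{eq: limit sce} with $f_k$ replaced by $f_k^{(N)}$. The hypothesis $f_k^{(N)} \to f_k$ in $L^2([0,1])$ is then literally convergence of the data of the integral equation.

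Next I would establish locally uniform convergence $u^{(k)} \to u_k$ and $m \to m_0$ on $\C^+$. For fixed $z$ with $\eta = \im z > 0$, iterating the fixed-point equation from a purely imaginary starting point keeps $\im u^{(k)} \geq 0$, so the denominator has imaginary part at least $\eta$, and Cauchy--Schwarz with assumption (ii) gives $|u^{(k)}(z)| \leq \eta^{-1}\|f_k^{(N)}\|_{L^1} \leq \eta^{-1}\sqrt{M}$. For any subsequential limit $u^*$, $L^2$-convergence of $f_j^{(N)}$ combined with dominated convergence (integrand bounded by $f_j(x)/\eta$) shows $u^*$ solves \eqref{eq: limit sce}, so $u^* = u$ by Theorem \ref{thm: limit solution}; hence $u^{(k)}(z) \to u_k(z)$ and $m(z) \to m_0(z)$ pointwise. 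Since these quantities are holomorphic in $z$ and uniformly bounded on compacta, Vitali's theorem upgrades this to locally uniform convergence on $\C^+$.

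The main obstacle is extending this convergence from compacta of $\C^+$ to the non-compact bulk domain $\cal{B}_I \deq \{E+\ii\eta : E \in I,\ 0 < \eta \leq 1\}$, which includes the regime $\eta \to 0^+$. On any compact subregion $\cal{B}_I \cap \{\eta \geq \eta_0\}$, uniform convergence yields $\im m \geq \tfrac{1}{2}\inf \im m_0 > 0$ for $N$ large. To close the gap at small $\eta$, I would exploit the \emph{uniform-in-$z$ stability} of the limiting fixed-point equation inside the bulk: the hypothesis $\im m_0 \geq c_I^0 > 0$ on $\cal{B}_I$ implies, by an argument parallel to the stability analysis in Section~\ref{sec: solution}, that the Jacobian of the map whose fixed point is $u$ has spectral radius bounded away from $1$ uniformly on $\cal{B}_I$. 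Linearizing the difference of the two integral systems around $u$ would then yield
\begin{equation*}
\max_{k}\bigl|u^{(k)}(z) - u_k(z)\bigr| \ \leq\ C_I \sum_{j=1}^{r} \|f_j^{(N)} - f_j\|_{L^2([0,1])} \ \longrightarrow\ 0
\end{equation*}
uniformly on $\cal{B}_I$, with an analogous bound for $m - m_0$. Hence for all sufficiently large $N$, $\im m \geq \tfrac{1}{2}c_I^0$ on $\cal{B}_I$, making $I$ a bulk interval. The hardest part of this plan is verifying the uniform-in-$\eta$ stability of the limit fixed-point equation throughout the bulk, since this is precisely what pushes the control up to the real axis and rules out a degeneration $\im m \to 0$ in the limit $\eta \to 0^+$.
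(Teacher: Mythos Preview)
Your proposal is correct and its core mechanism---the uniform-in-$z$ stability of the limit system \eqref{eq: limit sce} on the bulk domain---is exactly what the paper invokes: its entire proof is the one-line reference to Theorem~\ref{thm: stability}. Your compactness/Vitali argument on $\{\eta\ge\eta_0\}$ is a detour the paper skips, but it does supply the a~priori closeness $|u^{(k)}-u_k|\le\e_0$ that the stability theorem requires as a hypothesis; you should then make explicit the standard continuity-in-$\eta$ bootstrap (as in the proof of Theorem~\ref{thm: local law}) that propagates the stability bound from $\eta\ge\eta_0$ down to all of $\cal B_I$, since as written your final displayed inequality assumes that hypothesis already holds throughout $\cal B_I$.
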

\begin{proof}
	This theorem is a consequence of the stability of the system \eqref{eq: limit sce}.  See Theorem \ref{thm: stability}.
\end{proof}

\subsection{Power law with $\beta>3$}\label{sec: power law}
For simplicity, we assume $r=1$, that is, the matrix $s_{ij}$ has rank one.  Assume that 
\eq{
	\gamma_i= \left(\frac{i}{N}\right)^{-\mu},
}
where $0<\mu<1/2$, then the sequence of functions
\eq{
	f^N(x)= \sum_ i ^N 1_{[\frac{i-1}{N},\frac{i}{N}]} \gamma_i\,
}
converges to $f(x)=x^{-\mu}$ in $L^2([0,1])$.  From the previous subsection, we know that bulk intervals exist.  Therefore, Theorem \ref{thm: local law} and Theorem \ref{thm: bulk universality}  apply to this case.  

In particular, this model gives a random graph that has a power law as the degree distribution with exponent $\beta=1+\frac{1}{\mu}>3$.  To be precise, we consider a random graph with $N$ vertices and the probability of the $i$-th vertex connects to the $j$-th vertex is $qs_{ij}=N^{\kappa-1}  \left(\frac{i}{N}\right)^{-\mu} \left(\frac{j}{N}\right)^{-\mu}$.  Assume that different edges are independent.  Thus, the adjacency matrix of this graph is a random matrix with variance $qs_{ij}$.  Now the expected degree of the $i$-th edge is $\sum_j qs_{ij}$, proportional to $\left(\frac{i}{N}\right)^{-\mu}$. Thus the number of edges that has degree $[Nx, N(x+\dd x)]$ is asymptotically $Nx^{-1-\frac{1}{\mu}}$, up to a normalizing constant, which is exactly a power law distribution with exponent $\beta=1+\frac{1}{\mu}>3$.

\section{Proof of the local law}\label{sec: proof of local law}

In this section, we prove Theorem \ref{thm: local law}.  
\subsection{Some notation and resolvent identities}
We introduce some notations which will be useful later on.
\begin{definition} [Minors]
For $\T \subset \{1,\dots, N\}$, we define $H^{(\T)}$ by
\eq{
	\pa{H^{(\T)}}_{ij}\deq h_{ij} 1_{i\notin \T} 1_{j\notin \T}\,.
}
Also, the corresponding Green's functions are defined as
\eq{
	G_{ij}^{(\T)}(z)\deq (H^{(\T)}-z)^{-1}_{ij}\,.
}
In a similar way, we use the notation
\eq{
	\sum_{i}^{(\T)} \cdot \deq \sum_{i\in N\setminus \T}\cdot
}
Moreover, we abbreviate $(\h{i})$ by $(i)$ and $(\T\cup\h{i})$ by $(\T i)$.
\end{definition}
\begin{definition}[Partial expectation]
Let $X=X(H)$ be a random variable. We define $Q_i$ by
\eq{
	Q_i(X)\deq X-\E\qa{X \mid H^{(i)}}\,.
}
\end{definition}
The following lemma is frequently used in the proof of the local law.  See Lemma 4.5 in \cite{Erdos 2013a} for reference.
\begin{lemma}[Resolvent identities] \label{lem: resolvent}
For any Hermitian matrix $H$ and $\T\subset\{1,\dots,N\}$, the following identities hold. If $i,j,k\notin \T$ and $i,j\neq k$, then
\eq{\label{eq: off diagonal 1}
	G_{ij}^{(\T)}=G_{ij}^{(\T k)}+\frac{G_{ik}^{(\T)}G_{kj}^{(\T)}}{G_{kk}^{(\T)}}\quad  \text{and} \quad \frac{1}{G_{ii}^{(\T)}}=  \frac{1}{G_{ii}^{(\T k)}}- \frac{G_{ik}^{(\T)}G_{ki}^{(\T)}}{G_{ii}^{(\T)}G_{ii}^{(\T k)}G_{kk}^{(\T)}}\,.
}
If $i,j\notin \T $ satisfy $i\neq j$, then
\eq{\label{eq: off diagonal 2}
	G_{ij}^{(\T)}=-G_{ii}^{(\T)}\sum_k^{(\T i)} h_{ik} G_{kj}^{(\T i)} =-G_{jj}^{(\T)}\sum_k^{(\T j)}  G_{ik}^{(\T j)}  h_{kj}\,.
}
If $i\not\in \T$, then
\eq{\label{eq: schur}
	\frac{1}{G_{ii}^{(\T)} }= h_{ii}-z-\sum_{j,k}^{(\T i)}h_{ij} G_{jk}^{(\T i)}h_{ki}\,.
}
\end{lemma}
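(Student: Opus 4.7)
The plan is to derive all three identities uniformly from the Schur complement formula for block matrix inversion. Fix $\T$ and the spectral parameter $z$, and set $M \deq H^{(\T)} - z$, which is a symmetric matrix (with a ``hole'' on the rows/columns indexed by $\T$, but we may simply restrict to the complement of $\T$ since those rows/columns are zero). The key tool is: if $M$ is written in block form with a scalar entry $a$ in position $(k,k)$, a column $b$ in the off-diagonal slice, and a block $C$ corresponding to the remaining indices, then $(M^{-1})_{kk} = (a - b^\top C^{-1} b)^{-1}$ and $(M^{-1})_{k,\cdot} = -(M^{-1})_{kk}\, b^\top C^{-1}$ on the off-diagonal.

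First I would prove \eqref{eq: schur}. Applying the Schur complement formula with $k = i$, so that $a = h_{ii}-z$, the column $b$ has entries $h_{ij}$ for $j \notin \T\cup\{i\}$, and $C = H^{(\T i)} - z$ restricted to the complement of $\T\cup\{i\}$, we obtain immediately
\[
\frac{1}{G_{ii}^{(\T)}} = h_{ii} - z - \sum_{j,k}^{(\T i)} h_{ij} G_{jk}^{(\T i)} h_{ki}.
\]
The same block formula, applied to the off-diagonal slice, gives the off-diagonal identity \eqref{eq: off diagonal 2}:
\[
G_{ij}^{(\T)} = -G_{ii}^{(\T)} \sum_k^{(\T i)} h_{ik} G_{kj}^{(\T i)},
\]
and the mirror version by symmetry of $M$.

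Next I would derive \eqref{eq: off diagonal 1}. Pivoting instead on index $k$, write $M$ in block form with scalar entry $h_{kk}-z$ and block $A = H^{(\T k)}-z$ on the complement. The block-inversion formula gives
\[
(M^{-1})_{ij} = A^{-1}_{ij} + \frac{(A^{-1}b)_i (b^\top A^{-1})_j}{h_{kk} - z - b^\top A^{-1} b}
= G_{ij}^{(\T k)} + \frac{(A^{-1}b)_i (b^\top A^{-1})_j}{1/G_{kk}^{(\T)}},
\]
where in the denominator I used \eqref{eq: schur} for $G_{kk}^{(\T)}$. Finally, the identity \eqref{eq: off diagonal 2} already proven (applied to the row/column $k$) rewrites $(A^{-1}b)_i = -G_{ik}^{(\T)} / G_{kk}^{(\T)}$ and $(b^\top A^{-1})_j = -G_{kj}^{(\T)} / G_{kk}^{(\T)}$, producing
\[
G_{ij}^{(\T)} = G_{ij}^{(\T k)} + \frac{G_{ik}^{(\T)} G_{kj}^{(\T)}}{G_{kk}^{(\T)}}.
\]
The companion statement for $1/G_{ii}^{(\T)}$ is then a purely algebraic rearrangement: specializing the previous display to $i = j$ and dividing by $G_{ii}^{(\T)} G_{ii}^{(\T k)}$ yields exactly the claimed formula.

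There is no real obstacle here: everything is a direct consequence of Schur's block-inversion formula combined with elementary algebra. The only mild care needed is in the bookkeeping of indices in $\T$, which is handled cleanly by restricting $M$ to $\N_N \setminus \T$ from the outset, since rows and columns indexed by $\T$ in $H^{(\T)}$ are zero and hence play no role in the inversion.
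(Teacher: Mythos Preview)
Your proof is correct and is precisely the standard Schur-complement derivation of these identities. The paper itself does not give a proof of this lemma; it simply states it and refers the reader to Lemma~4.5 of \cite{Erdos2013a}, where the same block-inversion argument is carried out.
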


\subsection{Derivation of the limit equation}
For the sake of simplicity, we only consider the case where $S$ has rank 2, that is,
\eq{
	s_{ij}=\frac{1}{N}(\alpha_i\alpha_j +\beta_i\beta_j)\,.
}
Higher ranked cases can be handled similarly.  We can also recover the rank-one case by setting $\alpha_i=\beta_i$, for all $i$. Using the Schur complement formula (see Equation \eqref{eq: schur} of Lemma \ref{lem: resolvent}), we have
\eq{\label{eq: application of Schur}
	\frac{1}{G_{ii}}=-z-\sum_k s_{ik} G_{kk} +R_i\,,
}
where the error term $R_i$ is defined as
\eq{\label{eq: error}
	R_i \deq h_{ii}+\sum_k s_{ik} \frac{G_{ik}G_{ki}}{G_{ii}}-Q_i \sum_{k,l}^{(i)}h_{ik}G_{kl}^{(i)}h_{li} \,.
}

Define
\eq{
	U\deq\frac{1}{N}\sum_k \alpha_k G_{kk}\quad\text{and}\quad V\deq\frac{1}{N}\sum_k \beta_k G_{kk}\,.
}
After rearranging \eqref{eq: application of Schur}, we get the following equations
\eq{\label{eq: sce with error}
	\begin{split}
	U&=-\frac{1}{N}\sum_k\frac{\alpha_k}{z+\alpha_k U+\beta_k V-R_i}\,,\\
	V&=-\frac{1}{N}\sum_k\frac{\beta_k}{z+\alpha_k U+\beta_k V-R_i}\,,\\
	m_N&=-\frac{1}{N}\sum_k\frac{1}{z+\alpha_k U+\beta_k V-R_i}
	\end{split}
}

Formally, we neglect the error terms and replace $G_{ii}$ with $g_i$, then Equation \eqref{eq: application of Schur} becomes
\eq{
	\frac{1}{g_i}=-z-\sum_k s_{ik} g_k\,.
}
\eqref{eq: sce with error} becomes
\eq{\label{eq: sce}
	\begin{split}
		u(z) &= -\frac{1}{N}\sum_{k=1}^{N} \frac{\alpha_{k}}{z+\alpha_{k}u(z)+\beta_{k}v(z)} \\
		v(z) &= -\frac{1}{N}\sum_{k=1}^{N} \frac{\beta_{k}}{z+\alpha_{k}u(z)+\beta_{k}v(z)} \\
		m(z) &= -\frac{1}{N}\sum_{k=1}^{N}\frac{1}{z+\alpha_{k}u(z)+\beta_{k}v(z)}
	\end{split} \,.
}
In Section \ref{sec: solution}, we will prove that this system has a unique solution and the solution is stable under small perturbation.

\subsection{Large deviation estimates} 
To control the error terms in the self-consistent system, we need several large deviation estimates, which we state here. We omit the proof here, which is a slight modification of Lemma A.1 in \cite{Erdos2013b}.  
\begin{lemma}\label{lem: LDE} 
Assume that the family of random variables $\pa{h_{ij}}_{i,j\in\N_{N}}$ satisfy Equation \eqref{eq: moment assumptions}.

\begin{enumerate}
\item
Fix an index $i\in \N_N$. Let $\p{A_{1},\ldots,A_{N}}$ be a family of random variables that is independent of $\p{ h_{i1},\ldots,h_{iN} }$ and satisfies $A_j\prec 1$ for $j\in \N_N$, then we have
\eq{
	\sum_j (h_{ij}(t)^2-s_{ij}) A_j  \prec \sqrt{\frac{\theta_i}{q}}\,.
}
\item Fix distinct indices $i,j\in\N_N$. Let $(B_{kl})_{k,l=1}^{N}$ be a family of random variables that is independent of $\p{ h_{i1},\ldots,h_{iN} }$ and $\p{h_{1j},\ldots,h_{Nj}}$ and that satisfies $B_{kl}\prec 1$ for $k,l\in \N_N$, then we have
\eq{
	\sum_{k\neq l} h_{ik}(t) B_{kl} h_{lj}(t) \prec \sqrt{\frac{\theta_i\theta_j}{q}}+\left( \frac{\theta_i \theta_j}{N^2} \sum_{k,l} \theta_k\theta_l \absa{B_{kl}}^2\right)^{1/2}\,.
}
\end{enumerate}
\end{lemma}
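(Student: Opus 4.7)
The plan is to establish both bounds by the standard moment method, mirroring Lemma~A.1 of \cite{Erdos2013b} but carrying the sparsity parameter $q$ of this paper and the weighted quantities $\theta_i$ through the computation. Concretely, I will bound $\mathbb{E}|\mathcal{S}|^{2p}$ for an arbitrary integer $p$, conditioning on the $\sigma$-algebra generated by the coefficients (so that they may be treated as deterministic), and then invoke Markov's inequality together with the definition of $\prec$ to extract the claimed high-probability bound.

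For part (i), set $X_j\deq h_{ij}^2-s_{ij}$, which are centered and independent conditionally on $(A_j)$. Combining $\mathbb{E}(X_j^2)\leq \mathbb{E}|h_{ij}|^4$ with the sparse moment bound \eqref{eq: moment assumptions} gives a variance of order $s_{ij}/q$, and a standard high-moment expansion of $\mathbb{E}|\sum_j X_j A_j|^{2p}$ (each block of size $\geq 3$ in the partition expansion incurs an additional factor of $q^{-1}$) produces a leading term of order $(\sum_j s_{ij}/q)^p$. Using $s_{ij}=\tfrac{1}{N}\sum_k \gamma_i^{(k)}\gamma_j^{(k)}$, Cauchy--Schwarz, and the normalization $\tfrac{1}{N}\sum_{k,j}(\gamma_j^{(k)})^2\leq M$, I check that $\sum_j s_{ij}\lesssim \theta_i$, so the leading bound becomes $(\theta_i/q)^p$ and Markov's inequality yields $\sum_j X_j A_j\prec \sqrt{\theta_i/q}$.

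For part (ii), I expand $\mathbb{E}|\sum_{k\neq l} h_{ik}B_{kl}h_{lj}|^{2p}$ and group the $4p$ entries of $H$ into clusters of coincident index pairs. Since $i\neq j$, the row-$i$ variables $\{h_{ik}\}_k$ are independent of the row-$j$ variables $\{h_{lj}\}_l$, so independence forces each cluster to have size at least $2$. The canonical pair-matching---each $h_{ik}$ paired with another $h_{ik'}$ (requiring $k=k'$) and the row-$j$ factors paired analogously---produces $\big(\sum_{k\neq l}s_{ik}s_{lj}|B_{kl}|^2\big)^p$, and combining with $s_{ab}\leq \theta_a\theta_b/N$ (immediate from \eqref{eq: s} and $\gamma\geq 1$, since $\sum_k \gamma_a^{(k)}\gamma_b^{(k)}\leq (\sum_k \gamma_a^{(k)})(\sum_k \gamma_b^{(k)})$) yields the second term of the claim. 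Clusters of size $\geq 3$ each extract an extra $q^{-1}$ from \eqref{eq: moment assumptions}, and after summing the freed indices against $\theta$-weighted bounds they are absorbed into the first term $(\theta_i\theta_j/q)^p$, uniformly in $B$.

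The principal obstacle will be exactly this combinatorial bookkeeping in part (ii): each additional collapse of $h$-factors trades one summation index for a factor of $q^{-1}$, and checking uniformly in both the non-uniform variances $s_{ab}$ and the arbitrary coefficient array $B_{kl}$ that every such trade-off remains dominated by one of the two claimed terms is the step at which the argument of \cite{Erdos2013b} must be carefully adapted, with $\theta$-weighted Cauchy--Schwarz replacing the uniform estimates used there.
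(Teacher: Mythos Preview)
Your proposal is correct and is exactly the approach the paper indicates: the paper omits the proof entirely, stating only that it is ``a slight modification of Lemma~A.1 in \cite{Erdos2013b},'' which is precisely the high-moment expansion with Markov's inequality that you outline, adapted to the non-uniform variances via the bounds $\sum_j s_{ij}\lesssim \theta_i$ and $s_{ab}\leq \theta_a\theta_b/N$ that you verify.
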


\subsection{Proof of the local law}
Our strategy is to first assume that $\Lambda$ is bounded by a large control parameter $N^{-c}$ on some event that has high probability. With this \textit{a priori} bound on $\Lambda$, we prove that $\Lambda$ is actually bounded by a much smaller control parameter $\Phi$ (see Notation \ref{not: parameters}) with very high probability.  Thus, the probability that $\Lambda$ lies in between $N^{-c}$ and $\Phi$ is very small.  However, when $\eta$ is large (order one), we can easily show that $\Lambda$ is bounded by $\Phi$. Finally, we use a continuity argument to push this estimate all the way down to $\eta\geq \frac{N^\delta}{N}$ for arbitrarily small positive $\delta$.  

The following lemma essentially says that with very high probability, $\Lambda$ is outside the interval $[\Phi,N^{-c}]$.  So $[\Phi,N^{-c}]$ is sometimes called the ``forbidden'' region.
\begin{lemma}\label{lem: forbidden}
Assume that $I$ is a bulk interval (see Definition \ref{def: bulk interval}).  Let $\phi$ be the indicator function of some (possibly z-dependent) event. For any spectral domain $\cal{D}\subset \cal{D}_I$, if $\phi \Lambda\prec M^{-c}$ for some positive $c$, then $\phi\Lambda \prec \Phi$.
\end{lemma}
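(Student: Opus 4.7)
The plan is to run the standard bootstrap used for sparse local laws. Starting from the pointwise hypothesis $\phi\Lambda \prec N^{-c}$, I would use the resolvent identities of Lemma~\ref{lem: resolvent} together with the large deviation estimates of Lemma~\ref{lem: LDE} to realize the diagonal and off-diagonal Green's function entries as a $\Phi$-size perturbation of the self-consistent system~\eqref{eq: sce general}. The stability statement for~\eqref{eq: sce general} (to be proved in Section~\ref{sec: solution} via Theorem~\ref{thm: stability}) then promotes the a priori $N^{-c}$ bound to the desired bound $\Phi$.

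To control the diagonal, I would apply the Schur complement \eqref{eq: schur} to obtain $1/G_{ii} = -z - \sum_k s_{ik}G_{kk} + R_i$ with $R_i$ as in \eqref{eq: error}. The term $h_{ii}$ is handled by \eqref{eq: moment assumptions}, and the Schur correction $\sum_k s_{ik}G_{ik}G_{ki}/G_{ii}$ is bounded using $\Lao \prec N^{-c}$ together with the fact that $|G_{ii}|$ stays uniformly away from $0$ on a bulk interval (a consequence of $\im m \geq c_I$, $\Lad \prec N^{-c}$, and the explicit form of $g_i$). The fluctuation $Q_i\sum_{k,l}^{(i)}h_{ik}G^{(i)}_{kl}h_{li}$ is split into its diagonal ($k=l$) and off-diagonal ($k\neq l$) parts and bounded by Lemma~\ref{lem: LDE}(i)--(ii); the Ward identity $\sum_l|G^{(i)}_{kl}|^2 = \eta^{-1}\im G^{(i)}_{kk}$ converts sums of squares into $\im m_N/\eta$, which is controlled by a constant times $(N\eta)^{-1}$ on the bulk, producing the bound $\theta_i|R_i| \prec \theta_i\Phi$. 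Passing from $G^{(i)}$ back to $G$ through \eqref{eq: off diagonal 1} costs only $O(\Lao^2)\prec N^{-2c}$, which is negligible.

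For the off-diagonal bound, I would use \eqref{eq: off diagonal 2} to write $G_{ij} = -G_{ii}\sum_k^{(i)}h_{ik}G^{(i)}_{kj}$, and then apply Lemma~\ref{lem: LDE}(i) after a Ward step to get $\sqrt{\theta_i\theta_j}|G_{ij}| \prec \Phi$, i.e.\ $\Lao \prec \Phi$. Combined with the diagonal step, this shows that $(G_{ii})_i$ satisfies \eqref{eq: sce general} perturbed by an error of size at most $\Phi$ in the $\theta_i$-weighted norm governing $\Lad$, so Theorem~\ref{thm: stability} yields $\Lad \prec \Phi$.

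The main obstacle I expect is the weighted bookkeeping in Lemma~\ref{lem: LDE}(ii): one must extract precisely a $\sqrt{\theta_i\theta_j}$ factor from the second term $\bigl(\theta_i\theta_j N^{-2}\sum_{k,l}\theta_k\theta_l|B_{kl}|^2\bigr)^{1/2}$, which requires using the low-rank structure of $s_{ij}$ to absorb $\sum_{k,l}\theta_k\theta_l|G^{(i)}_{kl}|^2$ back into $\im m_N$ via a weighted Ward-type estimate. Without this care one picks up a factor of $\max_i\theta_i$, which can diverge in $N$ by the remark following \eqref{eq: s} (as in the power-law example of Section~\ref{sec: power law}) and would destroy the bound. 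A second subtle point is ensuring that the bulk hypothesis $\im m \geq c_I$ transfers to $\im m_N$ under the a priori estimate, which is needed both to keep $|G_{ii}|$ bounded below and to bound the Ward term; this should follow from $\Lad\prec N^{-c}$ combined with the continuity of $m$ on the bulk domain.
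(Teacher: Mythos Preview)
Your plan follows exactly the paper's strategy---a priori bound, resolvent identities, large deviation plus Ward, then stability---but two steps as written would fail and need to be repaired to match the paper's execution.

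First, the order matters: the paper bounds $\Lao$ \emph{before} attacking $R_i$. The Schur correction $\sum_k s_{ik}G_{ik}G_{ki}/G_{ii}$ is of size $\theta_i\Lao^2$ (since $s_{ik}\le\theta_i\theta_k/N$, $|G_{ik}|^2\le\Lao^2/(\theta_i\theta_k)$, and $|G_{ii}|^{-1}\asymp\theta_i$). Under only the a priori input $\Lao\prec N^{-c}$ this gives $\theta_i N^{-2c}$, which in general is \emph{not} $\prec\theta_i\Phi$; the paper first upgrades to $\Lao\prec\Phi$ and only then gets $\theta_i\Phi^2\prec\theta_i\Phi$ for this term (see \eqref{eq: second term in error}).

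Second, your off-diagonal step cites the wrong estimate. Lemma~\ref{lem: LDE}(i) controls centered squares $\sum_k(h_{ik}^2-s_{ik})A_k$ and says nothing about the linear sum $\sum_k^{(i)}h_{ik}G^{(i)}_{kj}$ produced by a single application of \eqref{eq: off diagonal 2}. The paper instead iterates \eqref{eq: off diagonal 2} once more to obtain the bilinear form
\[
G_{ij}=-G_{ii}G_{jj}^{(i)}\Bigl(h_{ij}-\sum_{k,l}^{(ij)}h_{ik}G_{kl}^{(ij)}h_{lj}\Bigr)
\]
and applies Lemma~\ref{lem: LDE}(ii). The weighted Ward obstacle you correctly anticipate is then handled exactly as you guess: by Cauchy--Schwarz (exploiting $|G_{kl}|=|G_{lk}|$) and Ward's identity,
\[
\sum_{k,l}\theta_k\theta_l\,|G_{kl}^{(ij)}|^2\;\le\;\sum_k\theta_k^2\,\eta^{-1}\Im G_{kk}^{(ij)}\;\prec\;N/\eta,
\]
using $\phi G_{kk}^{(ij)}\prec\theta_k^{-1}$ and $\sum_k\theta_k\prec N$; this delivers precisely the factor $(\theta_i\theta_j)^{-1/2}\Phi$ for $|G_{ij}|$.
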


\begin{proof}
On the domain $\cal{D}_I$, it is easy to see that $u$ and $v$ are bounded and $\im u$ and $\im v$ are bounded below.
By the definition of $\Lambda$ and $\cal{D}_{0}^{I}$, we see that 
\eq{\label{eq: phi Gii}
	\phi G_{ii}\asymp \frac{\phi}{\theta_i}\,,
}
where $\theta_i$ is defined as in \eqref{def: theta}:
\eq{
	\theta_i:=\alpha_i+\beta_i\,.
}
First, we bound $\Lao$. We need the identity
\eq{\label{eq: lambdao}
	G_{ij}=-G_{ii}G_{jj}^{(i)}\pa{h_{ij}-\sum_{k,l}^{(ij)} h_{ik} G_{kl}^{(ij)} h_{lj} }\,,
}
which is easily obtained by iterating Equation \eqref{eq: off diagonal 2} of Lemma \ref{lem: resolvent} . We get an \textit{a priori} bound on $G_{jj}^{(i)}$ using Equation \eqref{eq: off diagonal 1} of Lemma \ref{lem: resolvent}:
\eq{\label{eq: Gjj minor}
	\phi G_{jj}^{(i)}\prec \phi G_{jj} +\phi \frac{\theta_k}{\sqrt{\theta_j \theta_k}\sqrt{\theta_k\theta_j}}M^{-c}\prec \frac{1}{\theta_j}\,.
}
Similarly, one can prove that 
\eq{
	\phi G_{kl}^{(ij)}\prec \frac{1}{\sqrt{\theta_k\theta_l}},\quad \phi G_{kk}^{(ij)}\prec \frac{1}{\theta_k}\,.
}
By (ii) of Lemma \ref{lem: LDE}, \eqref{eq: lambdao} yields
\eq{\label{gij bound}
		\phi \absa{G_{ij}}\prec \frac{1}{\theta_i\theta_j}\pa{\sqrt{\frac{\theta_i\theta_j}{q}}+\sqrt{ \frac{\theta_i\theta_j}{N^2} \sum_{k,l} \theta_k\theta_l \absa{G_{kl}^{(ij)}}^2} }\,.
}
By the Cauchy-Schwartz inequality and Ward's identity, we have 
\eq{\label{application of Ward}
	\phi \sum_{k,l} \theta_k\theta_l \absa{G_{kl}^{(ij)}}^2\leq \phi \sum_k\frac{\theta_k^2\Im G_{kk}^{(ij)}}{\eta} \prec \frac{N}{\eta}\,,
}
where we used the bounds $\phi G_{kk}^{(ij)}\prec \frac{1}{\theta_k}$ and $\sum_k \theta_k \prec N$.  Therefore, Equation \eqref{gij bound} is stochastically dominated by
\eq{
	\frac{1}{\theta_i\theta_j}\pa{\sqrt{\frac{\theta_i\theta_j}{q}}+\sqrt{ \frac{\theta_i\theta_j}{N\eta}} }=\frac{1}{\sqrt{\theta_i\theta_j}}\Phi\,.
}
So $\Lao\prec \Phi$.

Next, we bound the error term $R_i$.  First, $h_{ii}\prec 1/\sqrt{q}\prec\Phi$. Second, using Equation \eqref{eq: phi Gii} and the bound $\Lao\prec \Phi$, we see
\eq{\label{eq: second term in error}
	\phi\sum_k s_{ik} \frac{G_{ik}G_{ki}}{G_{ii}}=\phi\sum_k^{(i)} s_{ik} \frac{G_{ik}G_{ki}}{G_{ii}}+ \phi s_{ii} G_{ii}\prec \frac{\theta_i^2}{N}\sum_k \frac{1}{\theta_i} \Phi ^2+\frac{1}{q} \prec \theta_i \Phi\,.
}
Third, we use Lemma \ref{lem: LDE} and the Ward identity (as we did in \eqref{application of Ward}) to estimate
\al{
	\begin{split}
		\phi Q_i \sum_{k,l}^{(i)}h_{ik}G_{kl}^{(i)}h_{li} &=\phi\sum_k^{(i)} (h_{ik}^2-s_{ik}) G_{kk}^{(i)} +\phi\sum_{k\neq l}^{(i)}h_{ik}G_{kl}^{(i)}h_{li} \\
		&\prec \frac{\theta_i}{\sqrt{q}}+\frac{\theta_i}{\sqrt{N\eta}}\sqrt{\frac{1}{N}\sum_k \theta_k^2} \prec \theta_i \Phi\,.
	\end{split}
}
Therefore,	 $\phi R_i \prec \theta_i\Phi$, which yields
\eq{
	\phi\theta_i\left(G_{ii}+\frac{1}{z+\alpha_i U+\beta_i V-R_i}\right)\prec \Phi\,.
}

By the stability of the self consistent equation, Theorem \ref{thm: stability}, we have $\phi\theta_i\absa{G_{ii} -g_i}\prec \Phi$ and therefore, $\Lambda\prec\Phi$.
\end{proof}

For $\eta$ large, we establish an \emph{a priori} bound on $\Lambda$, which we use as an input for Lemma \ref{lem: forbidden}.

\begin{lemma}\label{lem: initial estimate}
For any bounded interval $I\subset\R$, we have $\Lambda\prec \Phi$ on the line segment $\{E+\ii\eta: E\in I,\eta=10\}$.
\end{lemma}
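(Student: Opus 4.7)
The plan is to run the argument of Lemma~\ref{lem: forbidden} almost verbatim, with the deterministic operator-norm bound $\|G(z)\|_{\mathrm{op}}\leq 1/\eta=1/10$ playing the role of the \emph{a priori} hypothesis $\phi\Lambda\prec N^{-c}$ used there. Several ingredients are collected freely at $\eta=10$: the spectral theorem yields $|G_{ij}^{(\T)}(z)|\leq 1/\eta$ for every index set $\T$ and all $i,j\notin\T$, and since $\Im u^{(k)}(z)\geq 0$ with $\gamma_i^{(k)}\geq 1$, the same bound holds for the deterministic solution, $|g_i|\leq 1/\eta$. A direct examination of \eqref{eq: sce general} at $\eta=10$ in fact yields the sharper deterministic estimate $|g_i|\lec 1/(\eta+\theta_i)$, so that $\theta_i|g_i|$ is uniformly bounded.

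To control $\Lao$, I invoke the iterated resolvent identity \eqref{eq: lambdao} and Lemma~\ref{lem: LDE}(ii); the hypothesis $|G_{kl}^{(ij)}|\prec 1$ is automatic from the operator-norm bound. The key weighted Ward sum is estimated via the AM--GM inequality $\theta_k\theta_l\leq (\theta_k^2+\theta_l^2)/2$, the symmetry of $G$, and Ward's identity:
\begin{equation*}
\sum_{k,l}\theta_k\theta_l|G_{kl}^{(ij)}|^2\leq\sum_k \frac{\theta_k^2\Im G_{kk}^{(ij)}}{\eta}\leq\frac{1}{\eta^2}\sum_k\theta_k^2\lec\frac{N}{\eta^2}.
\end{equation*}
Plugging this into \eqref{eq: lambdao} and using $|G_{ii} G_{jj}^{(i)}|\leq 1/\eta^2$ together with the single-entry bound $|h_{ij}|\prec 1/\sqrt{q}\leq\Phi$, one obtains $\Lao\prec\Phi$ at $\eta=10$, the factor $1/\eta^2=1/100$ being harmless.

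For $\Lad$, I apply the Schur complement \eqref{eq: schur} and control the fluctuation $R_i\prec\theta_i\Phi$ using Lemma~\ref{lem: LDE} exactly as in the proof of Lemma~\ref{lem: forbidden}. Comparing the resulting approximate equation for $G_{ii}$ with $-1/g_i=z+\sum_k s_{ik}g_k$, and invoking the stability result Theorem~\ref{thm: stability}, yields $\theta_i|G_{ii}-g_i|\prec\Phi$, i.e.\ $\Lad\prec\Phi$. Combined with the off-diagonal bound, this gives $\Lambda\prec\Phi$ on the line segment $\eta=10$.

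The main obstacle is that the sharp \emph{a priori} estimate $|G_{kk}^{(ij)}|\prec 1/\theta_k$ used in Lemma~\ref{lem: forbidden} is no longer available, so the weaker $|G_{kk}^{(ij)}|\leq 1/\eta$ must be propagated throughout. At $\eta=10$ this loss is absorbed by the $O(1)$ prefactor $1/\eta^2$ together with the refined deterministic bound $\theta_i|g_i|\lec 1$; a short bootstrap via the Schur formula, using $\Im G_{kk}^{(i)}\geq 0$, shows that $|G_{ii}|$ has the same order $1/(\eta+\theta_i)$ as $|g_i|$, which is what is needed to close the final estimate on $\Lad$ when $\theta_i$ is large.
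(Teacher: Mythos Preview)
Your argument for $\Lao$ has a genuine gap. From \eqref{eq: lambdao} and Lemma~\ref{lem: LDE}(ii) the bracket is bounded by $\sqrt{\theta_i\theta_j}\,\Phi$, so with only the crude prefactor $|G_{ii}G_{jj}^{(i)}|\leq 1/\eta^2$ you get
\[
\sqrt{\theta_i\theta_j}\,|G_{ij}|\;\prec\;\frac{\theta_i\theta_j}{\eta^2}\,\Phi,
\]
not $\prec\Phi$. In this ensemble $\theta_i$ is \emph{not} uniformly bounded (in the power-law example of Section~\ref{sec: power law}, $\theta_i=(i/N)^{-\mu}$ grows like $N^{\mu}$), so the extra factor $\theta_i\theta_j$ cannot be absorbed into a constant at $\eta=10$. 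The sharp bound $|G_{ii}|\prec 1/\theta_i$ is therefore needed \emph{before} you attack $\Lao$, not after; you do mention exactly this bootstrap at the end, but only as input for $\Lad$.

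The paper's proof reverses your order: it first uses the Schur formula together with a lower bound $\Im G_{kk}^{(i)}\succ \eta/\theta_k^2$ (obtained from one more application of Schur) to show $|G_{ii}|\prec 1/\theta_i$, and only then bounds $\Lao$ and $R_i$ as in Lemma~\ref{lem: forbidden}. A second, smaller point: for the final stability step you should invoke Theorem~\ref{thm: stability initial}, not Theorem~\ref{thm: stability}; the latter assumes $I$ is a bulk interval and an \emph{a priori} closeness of $(U,V)$ to $(u,v)$, neither of which is available here, whereas Theorem~\ref{thm: stability initial} is tailored to the line $\eta=10$ and needs no bulk hypothesis.
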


\begin{proof} 
We proceed as in Lemma \ref{lem: forbidden}. First, we want to show that $G_{ii}\prec \frac{1}{\theta_i}$. In Lemma \ref{lem: forbidden}, this estimate was obtained using the \textit{a priori} assumption $\Lambda\prec N^{-c}$, but now we only have the trivial bound $\absa{G_{ii}}\leq \frac{1}{\eta}=\cal{O}(1)$. First, note that 
\eq{
	\absa{ \sum_{k\neq l}^{(i)} h_{ik}G_{kl}^{(i)} h_{li} } \prec \frac{C}{\sqrt{q}}\,.
}
Next, we proceed with Equation \eqref{eq: schur}, to see
\eq{\label{gii initial}
	\begin{split}
		\absa{G_{ii}} &=\absa{\frac{1}{-z-\sum_k^{(i)} h_{ik}^2 G_{kk}^{(i)} -\sum_{k\neq l}^{(i)} h_{ik}G_{kl}^{(i)} h_{li} }}\\
		&\leq \frac{1}{ \eta+\sum_k^{(i)} h_{ik}^2 \im G_{kk}^{(i)}-\cal{O}_{\prec}\pa{\frac{1}{\sqrt{q}}}}\,.
	\end{split}
}
Next, we get a lower bound for $\im G_{kk}^{(i)}$. Using Lemma \ref{lem: LDE} and Equation \eqref{eq: phi Gii}, we see
\eq{
	\im G_{kk}^{(i)} \geq \frac{\eta+\sum_l^{(k)} h_{kl}^2\im G_{ll}^{(ik) }+{\cal{O}_\prec \left( \frac{1}{\sqrt{q}}\right)}}{\absa{-z-\sum_{k, l}^{(ik)} h_{kl}G_{lm}^{(ik)} h_{mk}}^2}\geq \frac{\eta+\cal{O}_\prec \left( \frac{1}{\sqrt{q}}\right)}{\cal{O}_\prec (\theta_i^2)}\,.
}
Thus, $\frac{\eta}{\theta_i^2}\prec \im G_{kk}^{(i)}$. Note that the same holds for $G_{kk}$.  Now, we return to Equation \eqref{gii initial}, to get $\absa{G_{ii}} \prec\frac{1}{\theta_i}$.

Once we have this bound, we can move on to estimate $G_{ij}\prec \Phi/\sqrt{\theta_i\theta_j} $ as we did in \eqref{gij bound}.
The estimate for $R_i$ proceeds in exactly the same way as in Lemma \ref{lem: forbidden}, we omit the details: $R_i\prec \theta_i\Phi$. Thus,
\eq{
	\absa{G_{ii}+\frac{1}{z+\alpha_i U+\beta_i V-R_i}}\ga_i\prec \frac{R_i}{\theta_i^2}\prec \frac{\Phi}{\theta_i}\,,
}
By Theorem \ref{thm: stability initial}, this implies $\theta_i\absa{G_{ii}-g_i}\prec \Phi$, which in turn implies $\Lad\prec \Phi$. Therefore, $\Lambda\prec\Phi$.
\end{proof}

Now, we complete the proof of Theorem \ref{thm: local law} with a continuity argument.

\begin{proof}[Proof of theorem \ref{thm: local law}]

We choose a lattice $\Delta\subset D_\delta^I$ such that $\absa{\Delta}\leq N^{20}$ and the $N^{-5}$-neighborhood of $\Delta$ covers the whole $D_\delta^I$. Take $\phi=[\Lambda\leq N^{-c}]$ for some positive and small enough constant $c$.  Lemma \ref{lem: forbidden} states that for any positive, large $D$, $\e$ small (such that $\Phi N^\e<N^{-c}$), and $N$ large enough,
\eq{
	\P[ \exists w\in\Delta : \Phi N^\e \leq \Lambda \leq N^{-c}] \leq N^{-D}\,.
}
Since $\Lambda$ is Lipschitz on $D_\delta^I$ with constant at most $N^2$, we have
\eq{
	\P[ \exists z\in\Delta : 2\Phi N^\e \leq \Lambda \leq N^{-c}] \leq N^{-D}\,.
}
Using Lemma \ref{lem: initial estimate} and the continuity of $\Lambda$, we have 
\eq{
	\P[ \exists z\in\Delta : \Lambda\geq \Phi N^\e] \leq N^{-D}\,.
}
Thus, $\Lambda \prec \Phi$.
\end{proof}

\section{Solution to the self-consistent equation}\label{sec: solution}

In this section, we analyze the self-consistent system \eqref{eq: sce}. We prove the solution exists and is unique, we prove the equation is stable, and we prove some asymptotic behavior of the solution. 

For the sake of simplicity, we only consider the two dimensional case, as higher dimensional cases can be handled similarly. Thus, we have the equations
\eq{\label{eq: sce}
	\begin{split}
		u(z) &= -\frac{1}{N}\sum_{k=1}^{N} \frac{\alpha_{k}}{z+\alpha_{k}u(z)+\beta_{k}v(z)} \\
		v(z) &= -\frac{1}{N}\sum_{k=1}^{N} \frac{\beta_{k}}{z+\alpha_{k}u(z)+\beta_{k}v(z)} \\
		m(z) &= -\frac{1}{N}\sum_{k=1}^{N}\frac{1}{z+\alpha_{k}u(z)+\beta_{k}v(z)}
	\end{split} \,.
}
This is a two-dimensional version of equations \eqref{eq: sce general}.  Next, we have an existence and uniqueness theorem for the solution of equation \eqref{eq: sce}.

\begin{theorem}\label{thm: solution}
	For any $z\in \C^+$ there is unique $(u,v)\in \C^+\times \C^+$ satisfying 	\eqref{eq: sce}.  Moreover, $(u,v)$ is holomorphic in $z$.
\end{theorem}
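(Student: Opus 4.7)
The plan is to prove existence by Brouwer's fixed point theorem, uniqueness by a $2\times 2$ spectral-radius argument, and holomorphy by the holomorphic implicit function theorem at the unique solution.

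For existence, let $F=(F_1,F_2)$ denote the map on the right-hand side of \eqref{eq: sce}. For $(u,v)$ with $\im u,\im v\geq 0$ and $z=E+\ii\eta\in\C^+$, each denominator $A_k\deq z+\alpha_k u+\beta_k v$ satisfies $\im A_k\geq \eta$, so $\im F_j\geq 0$ and $|F_j(u,v)|\leq (N\eta)^{-1}\sum_k\alpha_k\leq \sqrt{M}/\eta\eqd R$, using Cauchy--Schwarz together with the moment hypothesis $N^{-1}\sum_k(\alpha_k^2+\beta_k^2)\leq M$. Hence $F$ extends continuously to, and maps into itself, the compact convex set $K\deq\{(u,v)\in\overline{\C^+}\times\overline{\C^+}:|u|,|v|\leq R\}$, and Brouwer gives a fixed point $(u^*,v^*)\in K$. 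The boundary case $\im u^*=0$ is ruled out because then $\im F_1(u^*,v^*)\geq \eta R^{-2}N^{-1}\sum_k\alpha_k>0$ would contradict $u^*=F_1(u^*,v^*)$; similarly for $v^*$. Thus $(u^*,v^*)\in\C^+\times\C^+$.

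For uniqueness, take two solutions $(u_j,v_j)$, $j=1,2$, and write $A_{k,j}\deq z+\alpha_k u_j+\beta_k v_j$, $w_k\deq(\alpha_k,\beta_k)^T$, $\Delta\deq(u_1-u_2,v_1-v_2)^T$. The identity $A_{k,2}^{-1}-A_{k,1}^{-1}=(A_{k,1}-A_{k,2})/(A_{k,1}A_{k,2})$ yields $\Delta=\tilde M\Delta$ with $\tilde M\deq N^{-1}\sum_k w_k w_k^T/(A_{k,1}A_{k,2})$. The key task is to show $\rho(\tilde M)<1$. For each $j$, taking the imaginary part of \eqref{eq: sce} at $(u_j,v_j)$ yields
\[
(I-\Sigma_j)\begin{pmatrix}\im u_j\\ \im v_j\end{pmatrix}=\frac{\eta}{N}\sum_k\frac{w_k}{|A_{k,j}|^2},\qquad \Sigma_j\deq\frac{1}{N}\sum_k\frac{w_k w_k^T}{|A_{k,j}|^2},
\]
with both $(\im u_j,\im v_j)^T$ and the right-hand vector having strictly positive entries. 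Solving this $2\times 2$ system by Cramer's rule forces $\det(I-\Sigma_j)>0$ and $(\Sigma_j)_{ii}<1$, so $I-\Sigma_j\succ 0$ by Sylvester's criterion and $\rho(\Sigma_j)<1$. The AM--GM inequality $|A_{k,1}|^{-1}|A_{k,2}|^{-1}\leq \tfrac12(|A_{k,1}|^{-2}+|A_{k,2}|^{-2})$ gives the entrywise bound $|\tilde M|\leq \tfrac12(\Sigma_1+\Sigma_2)$; monotonicity of the spectral radius under entrywise domination of entrywise-nonnegative matrices, combined with subadditivity of the operator norm on Hermitian positive-semidefinite matrices, then yields
\[
\rho(\tilde M)\leq \rho(|\tilde M|)\leq \rho\!\big(\tfrac12(\Sigma_1+\Sigma_2)\big)\leq \tfrac12(\rho(\Sigma_1)+\rho(\Sigma_2))<1,
\]
so $I-\tilde M$ is invertible and $\Delta=0$.

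For holomorphy, at the unique solution the Jacobian of the map $(u,v)\mapsto(u-F_1,v-F_2)$ equals $I-B$ with $B\deq N^{-1}\sum_k w_k w_k^T/A_k^2$ and $A_k\deq z+\alpha_k u+\beta_k v$. Setting $(u_1,v_1)=(u_2,v_2)$ in the previous paragraph gives $I-\Sigma\succ 0$, hence $\rho(B)\leq\rho(\Sigma)<1$; the Jacobian is invertible, and the holomorphic implicit function theorem makes $(u,v)$ holomorphic on $\C^+$. The main obstacle in this plan is the uniqueness step: one must convert the positivity of imaginary parts at each individual solution into the strict spectral bound $\Sigma_j\prec I$, and then transfer this through the complex, sign-indefinite matrix $\tilde M$ via the entrywise AM--GM estimate and monotonicity of the spectral radius under entrywise domination; everything else reduces to direct bookkeeping.
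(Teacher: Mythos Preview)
Your proof is correct and follows essentially the same route as the paper: Brouwer's fixed-point theorem for existence, and for uniqueness, taking imaginary parts at each solution to bound the spectral radius of the associated nonnegative $2\times 2$ matrix, then passing to the mixed matrix via AM--GM. The differences are cosmetic---you obtain the strict bound $\rho(\Sigma_j)<1$ by Cramer/Sylvester where the paper invokes Perron--Frobenius for $r(T)\leq 1$ and extracts strictness from Cauchy--Schwarz, and you supply the holomorphy argument via the implicit function theorem, which the paper omits.
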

\begin{remark}
Thus, $m$ is uniquely determined by $(u,v)$, by the third equation of \eqref{eq: sce}.
\end{remark}

\begin{proof}
	Fix $z\in \C^+$. Define $F:\overline{\C^+}\times \overline{\C^+}\rightarrow \C^+\times \C^+$ by 
	\eq{
		(\sigma,\tau)\mapsto \left(\frac{1}{N}\sum_k\frac{\alpha_k}{z+\alpha_k \sigma+\beta_k\tau},\frac{1}{N}\sum_k\frac{\alpha_k}{z+\alpha_k 		\sigma+\beta_k\tau}\right)\,.
	}
	As a rational function, it is continuous.  By Brouwer's fixed point theorem, there is a fixed point ${(u,v)\in\C^+\times \C^+}$ such that $F(u,v)=(u,v)$.
	
	To see uniqueness, we first note that by taking the imaginary part of \eqref{eq: sce} that
	\eq{
		\im u \geq \frac{1}{N}\sum_k \frac{ \alpha _k(\alpha_k \im u+\beta_k \im v)}{\absa{z+\alpha_k u+\beta_k v}^2} \quad\text{and}\quad\im v \geq \frac{1}{N}\sum_k \frac{ \beta_k (\alpha_k \im u+\beta_k \im v)}{\absa{z+\alpha_k u+\beta_k v}^2}\,.
	}
	For $T$ defined by 
	\eq{\label{def: t}
		T:=\frac{1}{N}\sum_k \frac{1}{\absa{z+\alpha_k u+\beta_k v}^2}\begin{bmatrix}
			\alpha_k^2 &\alpha_k\beta_k \\ \alpha_k\beta_k & \beta_k^2
		\end{bmatrix}\,,
	}
	the Perron-Frobenius theorem implies  the spectral radius $r(T)$ is less than or equal to 1. 
	
	Now, assume for the sake of contradiction that there is another solution $(u',v')$, then again we have an analogue of Equation \eqref{def: t} and $r(T')\leq 1$, where $T'$ is defined likewise:
	\eq{\label{def: t prime}
		T':=\frac{1}{N}\sum_k \frac{1}{\absa{z+\alpha_k u+\beta_k v}^2}\begin{bmatrix}
			\alpha_k^2 &\alpha_k\beta_k \\ \alpha_k\beta_k & \beta_k^2
		\end{bmatrix}\,.
	}
	So, we have
	\eq{\label{eq: uniqueness u}
		u-u'= \frac{1}{N}\sum_k \frac{ \alpha _k(\alpha_k (u-u')+\beta_k(v-v'))}{(z+\alpha_k u+\beta_k v)(z+\alpha_k u'+\beta_k v')}
	}
	and
	\eq{\label{eq: uniqueness v}
		v-v'=\frac{1}{N} \sum_k \frac{ \beta_k(\alpha_k(u-u')+\beta_k(v-v'))}{(z+\alpha_k u+\beta_k v)(z+\alpha_k u'+\beta_k v')}\,.
	}
	Denote
	\eq{
		\tilde{T}:=\frac{1}{N}\sum_k \frac{1}{\absa{z+\alpha_k u+\beta_k v}\absa{z+\alpha_k u'+\beta_k v'}}\begin{bmatrix}
			\alpha_k^2 &\alpha_k\beta_k \\ \alpha_k\beta_k & \beta_k^2
		\end{bmatrix}\,.
	}
	and $\xi=(\absa{u-u'}, \absa{v-v'})$. We take the absolute values of equations \eqref{eq: uniqueness u} and \eqref{eq: uniqueness v}, thus
	\eq{\label{eq: abs xi}
		\xi=\tilde{T}\xi\,.
	}
	Then we take the Euclidean norm to see
	\eq{
		\norma{\xi}\leq \norma{\tilde{T}\xi} \,.
	}
	However, applying Cauchy-Shwarz inequality to \eqref{eq: abs xi} yields
	\eq{
		\norma{\tilde{T}\xi}<\frac{1}{2}( \norma{T\xi}+\norma{T'\xi})\leq \norma{\xi}\,,
	}
	which is a contradiction.
\end{proof}

We also have a stability theorem in the bulk.  This theorem says that the solution to equation \eqref{eq: sce} is stable under small perturbation of the equation.

\begin{theorem}\label{thm: stability}
	Let $(u,v)$ be the solution to \eqref{eq: sce}.  Assume $I\in \R$ is a bounded internal and $c>0$ is a constant such that $\im m\geq c $ on 
	\eq{
		\cal{D}_I = \{z: \re z \in I\,, \im z \in(0,10)\}\,.
	}
	Then there are $\e, C\in \R^+$, depending on $c$ but not on $N$, such that the following holds.  For any $z\in \cal{D}_I$, assume that $(u',v')$ satisfies
	\eq{\label{eq: perturbed sce}
		\begin{split}
	u'+\frac{1}{N}\sum_ k \frac{\alpha_k}{z+\alpha_k u'+\beta_k v'}=r_1 \\
	v'+\frac{1}{N}\sum_ k \frac{\beta_k}{z+\alpha_k u'+\beta_k v'}=r_2
		\end{split}\,.
	}
	and $\max\{\absa{r_1},\absa{r_2},\absa{u'-u},\absa{v'-v}\}\leq \e$, then
	\eq{
		\max\{\absa{u'-u},\absa{v'-v}\}\leq C \max\{\absa{r_1},\absa{r_2}\}\,.
	}
\end{theorem}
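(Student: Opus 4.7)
The plan is to subtract the unperturbed self-consistent equation \eqref{eq: sce} from the perturbed one \eqref{eq: perturbed sce}, express $(\Delta u, \Delta v) \deq (u'-u, v'-v)$ as the solution of a $2 \times 2$ linear system $(I - \tilde{M})\Delta = r$, and then invert $I - \tilde{M}$ by a Neumann-series perturbation around the matrix $T$ from \eqref{def: t}, whose spectral properties are controlled by the uniqueness argument combined with the bulk assumption $\im m \geq c$.

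First I would carry out the subtraction. Applying $\frac{1}{A} - \frac{1}{B} = \frac{B-A}{AB}$ summand-by-summand, the difference of the two systems is
\begin{equation*}
\Delta u - \frac{1}{N}\sum_k \frac{\alpha_k(\alpha_k \Delta u + \beta_k \Delta v)}{(z + \alpha_k u + \beta_k v)(z + \alpha_k u' + \beta_k v')} = r_1,
\end{equation*}
together with the analogous equation for $\Delta v$ (with $\alpha_k$ replaced by $\beta_k$ in the outer factor). Equivalently, $(I - \tilde{M})(\Delta u, \Delta v)^T = (r_1, r_2)^T$ with
\begin{equation*}
\tilde{M} \deq \frac{1}{N}\sum_k \frac{1}{(z+\alpha_k u+\beta_k v)(z+\alpha_k u'+\beta_k v')} \begin{pmatrix} \alpha_k^2 & \alpha_k \beta_k \\ \alpha_k \beta_k & \beta_k^2 \end{pmatrix}.
\end{equation*}
For $\max(|\Delta u|, |\Delta v|) \leq \epsilon$, a direct entrywise estimate shows that $\tilde{M}$ is an $O(\epsilon)$ perturbation of $T$ in operator norm, so by a Neumann series argument it suffices to establish $\norm{(I - T)^{-1}} \leq C(c)$.

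Next I would establish this bound on $(I-T)^{-1}$. The matrix $T$ is $2\times 2$, symmetric, and entrywise non-negative, so its operator norm equals its Perron eigenvalue $\lambda_+$, and the uniqueness argument in Theorem \ref{thm: solution} already gives $\lambda_+ \leq 1$. To upgrade this to a quantitative gap $\lambda_+ \leq 1 - c'(c)$, I would take imaginary parts of the first two lines of \eqref{eq: sce} to obtain
\begin{equation*}
(I - T)\binom{\im u}{\im v} = \eta \binom{\sigma_1}{\sigma_2}, \qquad \sigma_1 = \tfrac{1}{N}\sum_k \alpha_k |g_k|^2, \quad \sigma_2 = \tfrac{1}{N}\sum_k \beta_k |g_k|^2.
\end{equation*}
The hypothesis $\im m \geq c$ together with $\alpha_k, \beta_k \geq 1$ immediately gives $\im u, \im v \geq c$, and multiplying the two rows of the above identity produces the explicit formula
\begin{equation*}
\det(I - T) = \frac{b \eta (\sigma_1 \im u + \sigma_2 \im v) + \eta^2 \sigma_1 \sigma_2}{\im u \cdot \im v}, \qquad b = \tfrac{1}{N}\sum_k \alpha_k \beta_k |g_k|^2.
\end{equation*}
Since the cofactor entries of $I - T$ are manifestly $O(1)$, a uniform lower bound on this expression yields $\norm{(I-T)^{-1}} \leq C(c)$, and the theorem's conclusion $\max(|\Delta u|, |\Delta v|) \leq C \max(|r_1|, |r_2|)$ follows.

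The main obstacle is producing the uniform lower bound on $\det(I-T)$ just claimed. The naive estimate gives only $\det(I-T) \gtrsim \eta$, which degenerates as $\eta \to 0^+$ and is insufficient. The fix is to exploit the identity $\im m = \eta \tfrac{1}{N}\sum_k |g_k|^2 + \sigma_1 \im u + \sigma_2 \im v$, obtained from the third line of \eqref{eq: sce}: when $\eta$ is small enough that $\eta\tfrac{1}{N}\sum_k|g_k|^2\leq c/2$ (which holds below a threshold depending only on $c$, using $|g_k|\leq 1/(c\alpha_k)$ and the bounded second moment of $\theta_k$), one concludes $\sigma_1 \im u + \sigma_2 \im v \geq c/2$; when $\eta$ is bounded away from zero, the $\eta\sigma_i$ contributions are of order one directly. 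In both regimes the resulting spectral gap $c'$ depends only on $c$ and the bounded interval $I$, not on $\eta$ or $N$, which is exactly the uniformity required by the statement.
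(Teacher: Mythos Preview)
Your linearization step is correct, but the identification of the unperturbed matrix is not. As $\epsilon\to 0$ the matrix $\tilde M$ converges entrywise to
\[
T_1 \;=\; \frac{1}{N}\sum_k \frac{1}{(z+\alpha_k u+\beta_k v)^2}
\begin{pmatrix}\alpha_k^2 & \alpha_k\beta_k\\ \alpha_k\beta_k & \beta_k^2\end{pmatrix},
\]
which has the \emph{squared} denominator $(z+\alpha_k u+\beta_k v)^2$, not the \emph{squared modulus} $|z+\alpha_k u+\beta_k v|^2$ appearing in $T$ from \eqref{def: t}. These two matrices are not close: $T$ is real and nonnegative, while $T_1$ is genuinely complex. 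So the Neumann-series step ``it suffices to bound $\|(I-T)^{-1}\|$'' is already wrong; what is needed is a bound on $\|(I-T_1)^{-1}\|$.

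Worse, the quantity you try to bound is in fact unbounded. Your own identity $(I-T)\binom{\im u}{\im v}=\eta\binom{\sigma_1}{\sigma_2}$ shows that $(\im u,\im v)$, which stays bounded below by $c$, becomes an approximate null vector of $I-T$ as $\eta\to 0$; hence $r(T)\to 1$ and $\|(I-T)^{-1}\|\to\infty$. Your determinant formula confirms this: both terms in the numerator carry an explicit factor of $\eta$, so the ``fix'' via $\sigma_1\im u+\sigma_2\im v\ge c/2$ still only yields $\det(I-T)\gtrsim \eta$, exactly the degeneration you were trying to avoid.

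The paper's proof perturbs $\hat T$ around $T_1$ instead, and then invokes Lemma~\ref{lem: spectrum}: although $r(|T_1|)\le r(T)\le 1$, the entries of $T_1$ lie in a compact set bounded away from the positive real axis (this is where $\im m\ge c$ enters), and a compactness argument shows that such a matrix cannot have an eigenvalue close to $1$. The complex phases, which you discarded by passing to $T$, are precisely what produces the spectral gap for $I-T_1$.
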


\begin{proof}
	Denote
	\eq{
		\hat{T}:=\frac{1}{N}\sum_k \frac{1}{(z+\alpha_k u+\beta_k v)(z+\alpha_k u'+\beta_k v')}\begin{bmatrix}
			\alpha_k^2 &\alpha_k\beta_k \\ \alpha_k\beta_k & \beta_k^2
		\end{bmatrix}\,.
	}

	Then, subtracting \eqref{eq: perturbed sce} from \eqref{eq: sce},
	\eq{
		(I-\hat{T})(u-u',v-v')= (r_1,r_2)\,.
	}
	The assumption $\im m\geq c $ implies that $u,u',v,v'$ are bounded and $\im u, \im u', \im v, \im v'$ are bounded below.  Denote	\eq{
		T_1:=\frac{1}{N}\sum_k \frac{1}{(z+\alpha_k u+\beta_k v)^2}\begin{bmatrix}
			\alpha_k^2 &\alpha_k\beta_k \\ \alpha_k\beta_k & \beta_k^2
		\end{bmatrix}\,.
	}
	Then $\norma{T_1-\hat{T}}\leq c_2 \max\{\absa{u'-u},\absa{v'-v}\}$.  Thus we have
	\eq{
		\max\{\absa{u'-u},\absa{v'-v}\} \lesssim \norma{(I-T_1)^{-1}}\left(\max\{\absa{r_1},\absa{r_2}\} -(\max\{\absa{u'-u},\absa{v'-v}\})^2\right)\,.
	}
	It remains to show that the eigenvalues of $T_1$ are bounded away from $1$, so that $(I-\hat{T})^{-1}$ is uniformly bounded. This, however, is guaranteed by Lemma \ref{lem: spectrum}.
\end{proof}

\begin{lemma}\label{lem: spectrum}
	$K\subset \C^+$ is a compact set.  Assume that $A=\begin{bmatrix} a_{11} & a_{12} \\ a_{21} & a_{22} \end{bmatrix}$, where $a_{ij} \in K, i,j\in \{1,2\}$ and that the spectral radius of
	\eq{\label{def: abs matrix}
		\absa{A}:= \begin{bmatrix} \absa{a_{11}} & \absa{a_{12} } \\ \absa{a_{21} } & \absa{a_{22} } \end{bmatrix}
	}
	is $\leq 1$.  Then there is a $\delta>0$ depending only on $K$ such that the eigenvalues of $A$ are bounded away from $1$ by $\delta$.
\end{lemma}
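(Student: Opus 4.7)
My plan is a two-step strategy: first rule out $1$ as an eigenvalue of any admissible $A$ qualitatively via a Perron--Frobenius and phase argument, then upgrade to a uniform separation by compactness.

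For the qualitative step, suppose for contradiction that $Av = v$ for some $v \neq 0$. Taking componentwise absolute values in $v = Av$ gives $|v| \leq |A|\,|v|$. Compactness of $K \subset \C^+$ gives a uniform lower bound $|z| \geq \Im z \geq c_K > 0$ on $K$, so $|A|$ has strictly positive entries and the Perron--Frobenius theorem yields a strictly positive left eigenvector $w$ with $w^T|A| = \rho(|A|)\,w^T$. Pairing $|A|\,|v| \geq |v|$ with $w$ gives $\rho(|A|)\,w^T|v| \geq w^T|v|$, and since $w^T|v| > 0$ we obtain $\rho(|A|) \geq 1$. Combined with $\rho(|A|) \leq 1$, this forces $\rho(|A|) = 1$, and then the strict positivity of $w$ upgrades $|A|\,|v| \geq |v|$ to the equality $|A|\,|v| = |v|$ componentwise. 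Rigidity in the triangle inequality then says that, in each row $i$, the nonzero terms $a_{ij}v_j$ share a common argument, which must equal $\arg v_i$ since their sum is $v_i$. If $v_i \neq 0$, comparing $\arg(a_{ii}v_i) = \arg v_i$ forces $\arg a_{ii} \equiv 0 \pmod{2\pi}$, contradicting $\Im a_{ii} > 0$. The leftover case $v_i = 0$ for some $i$ reduces via the other row to $a_{jj} = 1$, again contradicting $a_{jj} \in \C^+$.

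For the quantitative statement, argue by contradiction and compactness. If no uniform $\delta > 0$ worked, there would be a sequence of $2\times 2$ matrices $A^{(n)}$ with entries in $K$, $\rho(|A^{(n)}|) \leq 1$, and eigenvalues $\lambda_n \to 1$. Since $K$ is compact, a subsequence of $A^{(n)}$ converges to some $A_\infty$ whose entries lie in $K$. Continuity of the roots of the characteristic polynomial places $1$ in the spectrum of $A_\infty$, and continuity of the spectral radius on matrices of fixed size yields $\rho(|A_\infty|) \leq 1$. This contradicts the qualitative conclusion of the previous paragraph.

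The main subtlety I anticipate is the step passing from $\rho(|A|) = 1$ to the rigid pointwise equality $|A|\,|v| = |v|$; this is what turns a spectral bound into the componentwise identity needed to run the phase argument, and it depends crucially on $|A|$ being strictly positive, i.e.\ on $K$ being compactly contained in $\C^+$. Once this equality is secured, the forcing of $\arg a_{ii} \equiv 0$ is immediate, and the compactness argument to conclude is routine.
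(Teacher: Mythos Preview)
Your proof is correct and follows essentially the same approach as the paper: a compactness argument reduces to showing $1$ is never an eigenvalue, and that is ruled out via Perron--Frobenius together with the equality case of the triangle inequality forcing $\arg a_{ii}=0$, contradicting $a_{ii}\in K\subset\C^+$. The paper does the compactness step first and the Perron--Frobenius/phase argument second (and is much terser, jumping straight from $A\xi=\xi$ to $|A|\,|\xi|=|\xi|$ without spelling out the left-eigenvector pairing you give), whereas you establish the qualitative statement first and then run compactness; but the substance is the same.
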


\begin{proof}
	Assume that this is not true, then we can take a sequence $A_n$ satisfying the same conditions as $A$, with $\lambda_n$ and $\xi_n\neq 0$ such that
	\eq{
		A_n\xi_n = \lambda_n \xi_n, \text{ with } \lambda_n \rightarrow 1\,.
	}
We take a subsequence to get $A$ and $\xi$ such that 
	\eq{
		A\xi =\xi\,.
	}
By Perron-Frobenius theorem, we know that $\absa{A}\absa{\xi} =\absa{\xi}$, which means $a_{11}\xi_1$ is parallel to $\xi_1$.  This contradicts with $a_{11}\in K$.
\end{proof}

We also need a stability theorem when $\eta$ is large.  The difference is that we do not assume an \textit{a priori} bound for $\max\{ \absa{r_1},\absa{r_2}\}$ here.

\begin{theorem}\label{thm: stability initial}
	Let $(u,v)$ be the solution to \eqref{eq: sce}.  Assume $I\in \R$ is a bounded internal.  Then there are $\e, C\in \R^+$, depending on $c$, $I$ but not on $N$, such that the following holds.  For any $z\in \{\zeta: \re \zeta \in I, \im \zeta =10\}$, assume that $(u',v')$ satisfies
	\eq{\label{eq: perturbed sce}
		\begin{split}
	u'+\frac{1}{N}\sum_ k \frac{\alpha_k}{z+\alpha_k u'+\beta_k v'}=r_1 \\
	v'+\frac{1}{N}\sum_ k \frac{\beta_k}{z+\alpha_k u'+\beta_k v'}=r_2
		\end{split}.
	}
	and $\max\{\absa{u'-u},\absa{v'-v}\}\leq \e$, then
	\eq{
		\max\{\absa{u'-u},\absa{v'-v}\}\leq C \max\{\absa{r_1},\absa{r_2}\}\,.
	}
\end{theorem}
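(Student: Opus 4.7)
The plan is to mirror the proof of Theorem \ref{thm: stability} essentially verbatim, with the sole modification that the a priori bounds on $(u,v)$ --- which there came from the bulk interval hypothesis --- must now be obtained purely from the fact that $\eta = 10$, together with the structural moment bound $\frac{1}{N}\sum_k(\alpha_k^2+\beta_k^2)\leq M$. Once these bounds are in hand, the remainder of the proof transcribes without change.

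First I would derive the a priori bounds. Taking imaginary parts of \eqref{eq: sce} yields $\im u, \im v \geq 0$, and hence $|z + \alpha_k u + \beta_k v| \geq \im z = 10$. Combined with Cauchy--Schwarz and the moment bound, this gives $|u|, |v| \leq \sqrt{M}/10$. Plugging these upper bounds back shows that $|z + \alpha_k u + \beta_k v|$ is also bounded above (by a constant depending only on $I$ and $M$), so the identity
\[
	\im u \;=\; \frac{1}{N}\sum_k \frac{\alpha_k(\eta + \alpha_k \im u + \beta_k \im v)}{|z+\alpha_k u+\beta_k v|^2} \;\geq\; \frac{10}{N}\sum_k \frac{\alpha_k}{|z+\alpha_k u+\beta_k v|^2}
\]
produces a strictly positive lower bound on $\im u$ depending only on $I$ and $M$; an identical argument applies to $\im v$. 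These are precisely the a priori bounds that the bulk interval assumption guaranteed in Theorem \ref{thm: stability}.

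With these bounds secured, I would subtract \eqref{eq: perturbed sce} from \eqref{eq: sce} exactly as in the proof of Theorem \ref{thm: stability} to obtain $(I-\hat{T})(u-u',\,v-v')^T = (r_1,r_2)^T$, where $\hat{T}$ is the matrix defined there. A direct expansion gives $\|\hat{T}-T_1\|\lesssim \max\{|u-u'|,|v-v'|\}$, which for $\varepsilon$ sufficiently small is absorbed into a perturbation of $(I-T_1)^{-1}$. Lemma \ref{lem: spectrum}, applied using the newly established a priori bounds (which put the relevant denominators into a compact subset of $\C\setminus\{0\}$ and force the associated nonnegative matrix to have spectral radius at most $1$), yields a uniform bound on $\|(I-T_1)^{-1}\|$, and the standard Neumann series estimate $\|(I-\hat{T})^{-1}\|\leq 2\|(I-T_1)^{-1}\|$ closes the argument.

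The only real obstacle is the first step: verifying that all of the quantitative regularity of $(u,v)$ previously supplied by the bulk interval hypothesis --- namely boundedness of $|u|, |v|$ above and of $\im u, \im v$ away from zero --- is automatic when $\eta = 10$, with constants depending only on $I$ and $M$. All remaining steps (the subtraction, the perturbation estimate on $\hat{T} - T_1$, and the invocation of Lemma \ref{lem: spectrum}) are identical to Theorem \ref{thm: stability}, so no genuinely new ideas are needed beyond the observation that large $\eta$ acts as a built-in regularizer replacing the bulk interval assumption.
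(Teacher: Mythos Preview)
Your approach is correct, but it differs from the paper's own argument. You reuse the template of Theorem \ref{thm: stability} wholesale: establish a priori bounds on $(u,v)$ from $\eta=10$, perturb $\hat T$ to $T_1$, and invoke Lemma \ref{lem: spectrum} on $T_1$. The paper instead avoids the perturbation step entirely. It takes the imaginary part of the \emph{perturbed} equation \eqref{eq: perturbed sce} to obtain
\[
	(I-T')\begin{bmatrix}\im u'\\ \im v'\end{bmatrix}
	=\begin{bmatrix}\frac{1}{N}\sum_k\frac{\eta\alpha_k}{|z+\alpha_k u'+\beta_k v'|^2}+\im r_1\\[4pt] \frac{1}{N}\sum_k\frac{\eta\beta_k}{|z+\alpha_k u'+\beta_k v'|^2}+\im r_2\end{bmatrix},
\]
notes that for $r_1,r_2$ small (which is without loss of generality, since otherwise the conclusion is trivial from $|u'-u|,|v'-v|\le\e$) the right side is positive, and concludes $r(T')\le 1$ by Perron--Frobenius. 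Combining this with $r(T)\le 1$ yields $r(|\hat T|)\le 1$ directly, so Lemma \ref{lem: spectrum} applies to $\hat T$ itself rather than to $T_1$.

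What each buys: your route is conceptually cleaner in that it literally reduces to Theorem \ref{thm: stability} once the a priori bounds are in hand, and your explicit derivation of those bounds (via $|w_k|\ge 10$, Cauchy--Schwarz, and the second-moment hypothesis) is a genuine contribution the paper leaves implicit. The paper's route is slightly slicker in that it never needs the estimate $\|\hat T-T_1\|\lesssim\e$, and it extracts the spectral radius bound on $T'$ from the structure of the perturbed equation rather than from closeness to the exact solution. One small slip: you write ``compact subset of $\C\setminus\{0\}$'' for the hypothesis of Lemma \ref{lem: spectrum}, but the lemma actually requires $K\subset\C^+$; this is cosmetic and does not affect the validity of your argument.
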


\begin{proof}
	As in the last theorem, we get
	\eq{
		(I-\hat{T})(u-u',v-v')= (r_1,r_2).
	}
	Recall the definition \eqref{def: t prime} of $T'$. We take the imaginary part of \eqref{eq: perturbed sce} to get
	\eq{
		(I-T') \begin{bmatrix} \im u' \\ \im v' \end{bmatrix} =\begin{bmatrix}\frac{1}{N} \sum \frac{\eta\alpha_k}{\absa{z+\alpha_k u'+\beta_k v'}^2}+\im r_1 \\ \frac{1}{N} \sum \frac{\eta\beta_k}{\absa{z+\alpha_k u'+\beta_k v'}^2}+\im r_1 \end{bmatrix} \,.
	}
	When $r_1$ and $r_2$ are small enough, the right hand side is positive, thus $r(T')\leq 1$ by Perron-Frobenius theorem. We already know $r(T)\leq 1$, thus $r(\hat{\absa{T}})\leq 1$ (see \eqref{def: abs matrix} for the definition of $\hat{\absa{T}}$).  By Lemma \ref{lem: spectrum}, the eigenvalues of $\hat{T}$ are bounded away from $1$, thus $(I-\hat{T})^{-1}$ is uniformly bounded.	
\end{proof}

\section{Bulk universality}\label{sec: universality}
In this section, we prove the universality of $n$-point correlation functions.  The key ingredient is a universality theorem for deformed GOE matrices \cite{Landon2015a}, which we state below.  

A matrix $V$ is said to be $(l,G)$-regular at $E$ if for some parameter $l$ such that
\eq{\
	\frac{1}{N}\leq l \leq N^{-\e_1} \quad\text{and}\quad N^{\e_1} l\leq G^2 \leq N^{-\e_1}
}
there exist positive constants $c_V$ and $C_V$ such that
\eq{
	c_V\leq \im m_V(E'+i\eta)\leq C_V
}
uniformly for $E'\in(E-G,E+G)$ and $l\leq \eta \leq 10$.  In our case, $V$ is $H$ diagonalized and $l=N^{-1+\e}$, so $T=N^{-1+\e'}$ for arbitrarily small $\e'>\e>0$.  Here is their theorem.

\begin{theorem} [B. Landon \& H.-T. Yau \cite{Landon2015a}]\label{optimal speed}
Let 
\eq{
	H=V+\sqrt{T}W\,, 
}
where $V$ is a (deterministic or random) diagonal matrix and $W$ is a standard $GOE$ matrix.  Suppose that $V$ is $(l,G)$-regular at $E$ and that $N^{-\e}G^2\geq T\geq N^\e l$ for some positive $\e$. Let $O\in C^\infty_0(\R^n)$ be a test function. Fix a parameter $b=N^{c-1}$ for any positive $c$ satisfying $c<\e/2$. We have,
\eq{
	\begin{split}
		\lim_{N\rightarrow \infty} \int_{E-b}^{E+b} \int_{\R^n} O(\alpha_1,\dots,\alpha_n) \left[ \frac{1}{(\rho_{fc,T}^{(N)}(E))^n} \rho_T^{(n)} \left( E'+\frac{\alpha_1}{N\rho_{fc,T}^{(N)}(E)},\dots,E'+\frac{\alpha_n}{N\rho_{fc,T}^{(N)}(E)}\right)\right.\\\left.
		-\frac{1}{(\rho_{sc}(E))^n} \rho_{GOE}^{(n)} \left( E''+\frac{\alpha_1}{\rho_{sc}^{(N)}(E)},\dots,E''+\frac{\alpha_n}{\rho_{sc}^{(N)}(E)}\right)\right]\dd \alpha_1\dots\dd\alpha_n \frac{\dd E'}{2b} =0.
	\end{split}
}
\end{theorem}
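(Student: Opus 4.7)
The plan is to apply the three--step universality program adapted to the deformed matrix $H=V+\sqrt{T}\,W$. Since $W$ is GOE, the eigenvalue process of $H$ is in distribution that of Dyson Brownian motion started from the eigenvalues of $V$ and run for time $T$. The heart of the argument is an optimal relaxation estimate for this DBM: once $T\geq N^{\e}l$, the local $n$-point statistics of $H$, averaged on a window of size $b=N^{c-1}$ around $E$, agree with those of GOE.

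The first step is to establish a local law for $H$ on the spectral window $(E-G,E+G)$ down to the optimal scale $\eta\geq N^{-1+\e}$, with $\rho_{fc,T}^{(N)}$---the free convolution of the empirical measure of $V$ with the semicircle of scale $2\sqrt{T}$---playing the role of the deterministic limit. The Stieltjes transform $m_{fc,T}$ solves a self-consistent equation whose stability is a direct consequence of $(l,G)$-regularity together with the assumption $T\geq N^{\e}l$, which ensures $\im m_{fc,T}\asymp 1$ on the window. From here, a standard Schur-complement and fluctuation-averaging argument, in the style of Section~\ref{sec: proof of local law}, yields rigidity of the eigenvalues of $H$ at the optimal scale on $(E-G,E+G)$.

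The second step is to compare the DBM trajectory $t\mapsto V+\sqrt{t}\,W$ at time $T$ with an auxiliary DBM run for the same time, whose initial condition is engineered so that after time $T$ its local statistics are explicitly GOE (for instance, an initial configuration whose one-cut density coincides with $\rho_{fc,T}^{(N)}$ near $E$). The homogenization method of \cite{Landon2015a} provides pathwise bounds on the difference of the two eigenvalue processes in terms of a discrete parabolic equation whose coefficients are controlled by rigidity; integrating against a test observable on the window $(E-b,E+b)$ converts these bounds into convergence of averaged $n$-point correlation functions, yielding the stated identity with the GOE reference. The requirements $T\geq N^{\e}l$ and $T\leq N^{-\e}G^{2}$ enter respectively as the lower bound needed for the short-range heat kernel to relax and as the upper bound needed to keep the dynamics inside the regular window.

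The main obstacle is the second step, and specifically the derivation of the \emph{optimal} relaxation time. Classical DBM arguments give relaxation only for $T\gg N^{-1+\e}$, whereas here we must relax on time $T$ that is only a factor $N^{\e}$ above the local regularity scale $l$, which can itself greatly exceed $1/N$. This forces one to prove heat-kernel smoothing estimates for the DBM generator with non-constant, rigidity-based coefficients precisely on scale $T$, and to handle the boundary effects near $E\pm G$, where the $(l,G)$-regularity stops being effective and one must cut off the contribution of eigenvalues outside the window without destroying the short-range homogenization bound.
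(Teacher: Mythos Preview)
The paper does not prove this theorem at all: it is quoted verbatim from \cite{Landon2015a} and used as a black box in the proof of Theorem~\ref{thm: bulk universality}. There is therefore no ``paper's own proof'' to compare your proposal against.

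Your outline is a reasonable high-level summary of the Landon--Yau strategy (local law for the free convolution, rigidity, coupling two DBMs via homogenization of the discrete parabolic equation), and you correctly identify the main difficulty as obtaining relaxation at the optimal scale $T\geq N^{\e}l$ rather than $T\geq N^{-1+\e}$. But what you have written is a roadmap, not a proof: each of the three steps you list is itself a substantial technical argument in \cite{Landon2015a}, and in particular the homogenization estimate and the treatment of the boundary near $E\pm G$ require machinery (energy estimates for the short-range DBM generator, finite-speed-of-propagation bounds, a careful coupling construction) that you have only named, not carried out. If the intent is to reproduce the Landon--Yau proof here, you would need to import or redevelop that entire apparatus; if the intent is simply to use the result, then the correct move is to do what the paper does and cite it.
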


Our strategy to prove bulk universality is as follows.  Let $H_t=(h_{ij}(t))$ be a symmetric $N\times N$ matrix. The dynamics of the matrix entries are given by the stochastic differential equations
\eq{\label{eq: Ht}
	\dd h_{ij}(t)= \frac{\dd B_{ij}(t)}{\sqrt{N}}- \frac{h_{ij}(t)\dd t}{2 N s_{ij}}\,,
}
where $B$ is symmetric with $(B_{ij})_{1\leq i<j\leq N}$ and $(B_{ii}/\sqrt{2})_{i=1}^{N}$ a family of independent Brownian motions. The initial data $H_0$ is our target matrix $H$. We shall prove a comparison theorem, Theorem \ref{comparison}, which states that, for $T=N^{-1+\e}$ with $\e\ll 1$, the spectral statistics of $H_T$ agree with those of $H_0$. Next, we show that $H_T$ can be written as the sum of two random matrices,
\eq{
	H_T=\hH + \vartheta G\,,
}
where $H_T$ has a local law (Theorem \ref{thm: local law hH}) and $\vartheta G$ is a small GOE matrix. Next, we apply Theorem \ref{optimal speed} to get bulk universality for $H_T$.  Thus Theorem \ref{comparison} says that universality also holds for $H_0$.

We start with the comparison between $H_0$ and $H_T$.  The following lemma is a slight modification of Lemma A.1 in \cite{Erdos2012b}.  The main difference here is that we do not assume $s_{ij}\asymp \frac{1}{N}$.
\begin{lemma}\label{comparison 1}
Assume $s_{ij}$ are as in \eqref{eq: s}. Suppose that $F$ is a smooth function of the matrix elements $(h_{ij})_{i\leq j}$ satisfying
\eq{
	\sup_{0\leq s\leq t, i\leq j, \mathbb{\theta}} \E\left( (Ns_{ij}^{-1} \absa{h_{ij}(s)}^3 + \absa{h_{ij}(s)}) \absa{\partial_{ij}^3 F(\mathbb{\theta} H_s)}\right) \leq M\,,
}
where $(\mathbb{\theta}H)_{ij} =\theta_{ij} h_{ij}$ with $\theta_{kl}=1$ unless $\{k,l\}=\{i,j\}$ and $0\leq \theta_{ij}\leq 1$, then
\eq{
	\E F(H_t) -\E F(H_0) = \cal{O}(tN) M\,.
}
\end{lemma}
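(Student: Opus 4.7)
The plan is to derive an ODE for $\E F(H_t)$ via It\^o's formula applied to the Ornstein--Uhlenbeck dynamics \eqref{eq: Ht}, and to bound the right-hand side using a Taylor expansion in the $(i,j)$-entry so that the leading drift and diffusion terms cancel, leaving only remainders controlled by the third-derivative hypothesis.

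Since the martingale part vanishes in expectation, It\^o yields, for each $i\le j$, a drift contribution $-\frac{1}{2Ns_{ij}}\E\qa{h_{ij}(t)\,\partial_{ij}F(H_t)}$ and a diffusion contribution $\frac{c_{ij}}{2N}\E\qa{\partial_{ij}^2 F(H_t)}$, where $c_{ij}$ is an order-one constant reflecting the $\sqrt{2}$ normalization of the diagonal Brownian motions. The SDEs for distinct entries are driven by independent Brownian motions, so $h_{ij}(t)$ remains independent of $H_t^{(ij)}$---the matrix obtained from $H_t$ by zeroing out the $(i,j)$ and $(j,i)$ entries---for all $t$, and the dynamics are designed so that the first two moments $\E h_{ij}(t)=0$ and $\E h_{ij}(t)^2=s_{ij}$ are preserved by the flow (with at most an order-one discrepancy on the diagonal, absorbed into $c_{ij}$).

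Next, I Taylor-expand both $\partial_{ij}F(H_t)$ and $\partial_{ij}^2 F(H_t)$ around $H_t^{(ij)}$:
\als{
\partial_{ij}F(H_t) &= \partial_{ij}F(H_t^{(ij)}) + h_{ij}(t)\,\partial_{ij}^2 F(H_t^{(ij)}) + \tfrac{1}{2} h_{ij}(t)^2\,\partial_{ij}^3 F(\bm{\theta}^{(1)} H_t),\\
\partial_{ij}^2 F(H_t) &= \partial_{ij}^2 F(H_t^{(ij)}) + h_{ij}(t)\,\partial_{ij}^3 F(\bm{\theta}^{(2)} H_t),
}
with intermediate $\bm{\theta}^{(1)},\bm{\theta}^{(2)}$ of the form admitted by the hypothesis. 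Taking expectations and using independence together with $\E h_{ij}(t)^2=s_{ij}$, the two $\partial_{ij}^2 F(H_t^{(ij)})$-contributions from the drift and the diffusion cancel exactly: the factor $s_{ij}$ produced by $\E[h_{ij}^2]$ offsets the $s_{ij}^{-1}$ in the drift coefficient. What survives is
\eqs{
\frac{\dd}{\dd t}\E F(H_t) = \sum_{i\le j}\qa{-\frac{1}{4N s_{ij}}\E\qa{h_{ij}(t)^3\,\partial_{ij}^3 F(\bm{\theta}^{(1)} H_t)} + \frac{c_{ij}}{2N}\E\qa{h_{ij}(t)\,\partial_{ij}^3 F(\bm{\theta}^{(2)} H_t)}}.
}
By the hypothesis, each of the $\cal{O}(N^2)$ summands is bounded by $M/N$, so the sum is $\cal{O}(NM)$; integrating over $[0,t]$ yields $\E F(H_t)-\E F(H_0)=\cal{O}(tN)M$ as claimed.

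The main obstacle is bookkeeping the cancellation in the presence of the non-uniform and possibly large variances $s_{ij}$: unlike the setting of Lemma A.1 in \cite{Erdos2012b}, where $s_{ij}\asymp 1/N$, here $s_{ij}$ is only constrained by $qs_{ij}\le 1$, so the raw drift coefficient $1/(Ns_{ij})$ can be very large and must be exactly killed by $\E[h_{ij}^2]=s_{ij}$. The hypothesis has been engineered precisely for this---the weight $Ns_{ij}^{-1}\absa{h_{ij}}^3$ inside the supremum matches the effective size of the cubic remainder surviving the cancellation, and the weight $\absa{h_{ij}}$ matches the linear remainder from the Taylor expansion of $\partial_{ij}^2 F$.
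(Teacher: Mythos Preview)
Your approach is correct and is precisely the standard argument the paper has in mind: the paper does not give its own proof of this lemma but refers to Lemma~A.1 of \cite{Erdos2012b}, which proceeds exactly by applying It\^o's formula to the Ornstein--Uhlenbeck flow and Taylor-expanding each $\partial_{ij}F$ and $\partial_{ij}^2 F$ around $H_t^{(ij)}$ so that the second-order drift and diffusion contributions cancel, leaving only the third-derivative remainders controlled by the hypothesis. Your observation that the possibly large factor $(Ns_{ij})^{-1}$ in the drift is exactly offset by $\E h_{ij}^2=s_{ij}$, and that the surviving cubic remainder carries the weight $Ns_{ij}^{-1}\absa{h_{ij}}^3$ built into the assumption, is precisely the ``slight modification'' of \cite{Erdos2012b} the paper alludes to.
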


\begin{remark}
In our case, $t\leq N^{-1+\e}$, thus we only need $M\leq N^{-c}$ for some $c\geq \e$. 
\end{remark}

In order to apply this lemma to our case, we need some uniform bound on the Green's functions ${G_t=(H-z)^{-1}}$ for $t\in[0,T]$.  Clearly, for every $t\geq 0$, we have $\E (h_{ij}(t)^2)=s_{ij}$. Moreover, $h_{ij}(t)$ is a small perturbation of the initial $h_{ij}(0)$ and has the moment bound
\eq{\label{eq: moment bounds}
	\E \absa{h_{ij}(t)^p} \leq \frac{s_{ij}}{N q^{p/2 -1}}\,.
}  
Thus, we expect the same local law to hold for $H_T$.

\begin{theorem}[The local law for $H_t$]\label{thm: local law Ht}
Assume that $I$ is a bulk interval.  Fix positive $\delta$ and define an $N$-dependent spectral domain $\cal{D}_\delta^I\deq \ha{ E+\ii\eta: E\in I, N^{\delta-1}\leq \eta \leq10}$. Let $\pa{{h_{ij}(t)}}_{i,j=1}^{N}$ be defined as above in \eqref{eq: Ht} and fix $t\in[0,N^{-1+\e}]$.  Therefore, on the domain $\cal{D}_\delta^I$, we have $\Lambda\prec \Phi$.
\end{theorem}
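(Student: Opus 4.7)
The plan is to reduce the statement to the proof of Theorem \ref{thm: local law} in Section \ref{sec: proof of local law}, exploiting the fact that the Ornstein--Uhlenbeck dynamics \eqref{eq: Ht} preserves the variance structure of the matrix. Solving the SDE explicitly yields $h_{ij}(t) = e^{-\theta_{ij} t} h_{ij}(0) + \sigma \int_0^t e^{-\theta_{ij}(t-s)}\,\dd B_{ij}(s)$ with $\theta_{ij}=1/(2Ns_{ij})$ and $\sigma=1/\sqrt N$, and a direct computation gives $\E h_{ij}(t)^2 = s_{ij}$ for all $t\ge 0$. Consequently, the deterministic self-consistent system \eqref{eq: sce general}, its solution $(u^{(k)},m)$, the target approximations $g_i$, and the control parameter $\Phi$ of Notation \ref{not: parameters} are all unchanged, so that the statement to be proved has exactly the same form as Theorem \ref{thm: local law}.

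The main preliminary step is to verify that $H_t$ satisfies the moment bounds \eqref{eq: moment assumptions}, which are the only probabilistic input used in Section \ref{sec: proof of local law}. The deterministic part $e^{-\theta_{ij}t}h_{ij}(0)$ inherits the bound from $H_0$ up to a factor at most $1$. The stochastic part is independent of $h_{ij}(0)$, centered Gaussian with variance $s_{ij}(1-e^{-t/(Ns_{ij})}) \le t/N$, hence its $k$-th absolute moment is bounded by $C_k(t/N)^{k/2}$. For $t\le N^{-1+\e}$ and using the lower bound $s_{ij}\ge r/N$ (which follows from $\gamma^{(k)}_i\ge 1$), a short comparison of exponents shows this is dominated by the target $s_{ij}/(Nq^{k/2-1})$ for any fixed $k\ge 3$, provided $\e$ is chosen small enough compared with $\kappa$.

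Once \eqref{eq: moment assumptions} is established for $H_t$, the proof of Theorem \ref{thm: local law} transfers verbatim with $H$ replaced by $H_t$. Indeed, the large-deviation estimates of Lemma \ref{lem: LDE} depend only on the moment bound; the Schur-complement derivation of the approximate self-consistent system, the forbidden-region argument of Lemma \ref{lem: forbidden}, the initial estimate at $\eta=10$ of Lemma \ref{lem: initial estimate}, and the final lattice-plus-Lipschitz continuity argument all invoke only \eqref{eq: moment assumptions}, the resolvent identities of Lemma \ref{lem: resolvent}, and the stability Theorems \ref{thm: stability} and \ref{thm: stability initial}, each of which is insensitive to the dynamical time $t$. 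The main (and essentially the only) real task is the moment-bound verification above; no new idea is needed beyond that.
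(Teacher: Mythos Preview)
Your proposal is correct and follows exactly the approach the paper takes. The paper does not give a separate proof of this theorem; it simply observes, in the paragraph preceding the statement, that $\E h_{ij}(t)^2=s_{ij}$ for all $t\ge 0$ and that the moment bound \eqref{eq: moment bounds} continues to hold for $h_{ij}(t)$, so that the proof of Theorem \ref{thm: local law} applies verbatim. Your write-up supplies the explicit verification of that moment bound for the Gaussian increment (which the paper omits), but the logical structure is identical.
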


The following comparison lemma is a consequence of Lemma \ref{comparison 1} and Theorem \ref{thm: local law Ht}.  It is a modification of Lemma 5.2 in \cite{Landon2015}. 

\begin{lemma}\label{comparison 2}
Assume that $I$ is a bulk interval. Let $H_t$ be defined as in \eqref{eq: Ht}. Let positive $\delta$ be arbitrary and choose an $\eta$ such that $N^{-1-\delta}\leq \eta \leq N^{-1}$.  For any sequence of complex parameters $z_j = E_j\pm i\eta, j=1,\dots, n$, with $\absa{E_j}\in I$ with an arbitrary choice of the signs, we have the following.  Let $G_t(z)=(H_t-z)^{-1}$ be the resolvent and let $F(x_1, x_2, \dots, x_n)$ be a test function such that for any multi-index $\alpha=(\alpha_1, \dots, \alpha_n)$ with $1\leq\absa{\alpha} \leq 3$ and for any positive, sufficiently small $\omega$, we have 
\eq{
	\max\left\{ \absa{\partial^\alpha F(x_1,\dots, x_n)} : \max_j \absa{x_j} \leq N^\omega \right\} \leq N^{C_0\omega}
}
and
\eq{
	\max\left\{ \absa{\partial^\alpha F(x_1,\dots, x_n)} : \max_j \absa{x_j} \leq N^2 \right\} \leq N^{C_0}
}
for some constant $C_0$.
Therefore, for any $\delta$ with $N^{-1-\delta}\leq \eta \leq N^{-1}$ and for any choices of the signs in the imaginary part of $z_j$, we have
\eq{
	\absa{\E[F\left( \frac{1}{N} \tr G_t(z_1),\dots, \frac{1}{N} \tr G_t (z_n) \right) -\E[F(G_t\rightarrow G_0)]]}\leq \frac{Ct N^{1+c\delta}}{\sqrt{q}}\,,
}
where $c$ and $C$ are constants depending on $C_0$.
\end{lemma}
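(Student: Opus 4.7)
The plan is to apply Lemma \ref{comparison 1} to the composite functional
\eqs{
	\tilde F(H) \deq F\pa{\tfrac{1}{N}\tr G(z_1),\,\ldots,\,\tfrac{1}{N}\tr G(z_n)}\,,
}
so the comparison reduces to establishing the single bound
\eqs{
	\E\qa{\pa{Ns_{ij}^{-1}\absa{h_{ij}(s)}^{3}+\absa{h_{ij}(s)}}\absa{\partial_{ij}^{3}\tilde F(\bm\theta H_s)}}\,\prec\,N^{c\delta}/\sqrt{q}
}
uniformly in $s\in[0,t]$, in the interpolating $\bm\theta\in[0,1]^{N\times N}$, and in pairs $i\leq j$; multiplying by the $O(tN)$ prefactor of Lemma \ref{comparison 1} then delivers the claimed estimate. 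The two moment factors are immediate from \eqref{eq: moment bounds}, namely $\E[Ns_{ij}^{-1}\absa{h_{ij}(s)}^{3}]\leq 1/\sqrt{q}$ and $\E\absa{h_{ij}(s)}\leq\sqrt{s_{ij}}$, and they are independent of the resolvent; hence it suffices to establish the high-probability deterministic bound $\absa{\partial_{ij}^{3}\tilde F(\bm\theta H_s)}\prec N^{c\delta}$.

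To compute the derivative, expand by the chain rule to write $\partial_{ij}^{3}\tilde F$ as a sum of terms of the form $(\partial^\alpha F)\cdot\prod_l \partial_{h_{ij}}^{k_l}\pa{\tfrac{1}{N}\tr G(z_l)}$ with $\sum_l k_l \leq 3$. Using $\partial_{h_{ij}}G=-G(E_{ij}+E_{ji})G$ repeatedly, each derivative of a trace is a sum of products of $O(1)$ resolvent entries indexed within $\ha{i,j}$ together with contracted internal sums of the form $\tfrac{1}{N}\sum_k G_{ik}G_{kj}$; the latter will be controlled by Ward's identity $\sum_k\absa{G_{ik}}^2 = \im G_{ii}/\eta$ and its Cauchy--Schwarz consequences. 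The growth hypothesis on $F$ yields $\absa{\partial^\alpha F}\leq N^{C_0\omega}$ on the high-probability event $\ha{\max_l\absa{\tfrac{1}{N}\tr G(z_l)}\leq N^{\omega}}$, while the complementary event contributes only a negligible $N^{-D}\cdot N^{C_0}$.

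The hard part is that $\eta\in[N^{-1-\delta},N^{-1}]$ lies \emph{below} the scale at which the local law Theorem \ref{thm: local law Ht} has been established, so individual entries of $G(z_l)$ cannot be bounded directly. The remedy is to choose a reference scale $\eta_0\deq N^{-1+2\delta}$, apply Theorem \ref{thm: local law Ht} there to obtain $\im G_{ii}(E_l+\ii\eta_0)\prec 1/\theta_i$, and then exploit the monotonicity that $\eta\mapsto\eta\,\im G_{ii}(E+\ii\eta)$ is nondecreasing to transfer the bound to the target scale: $\im G_{ii}(z_l)\leq(\eta_0/\eta)\im G_{ii}(E_l+\ii\eta_0)\prec N^{3\delta}/\theta_i$. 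Combining with the spectral bound $\absa{G_{ij}(z_l)}^2\leq \im G_{ii}(z_l)/\eta$, each factor $\partial_{h_{ij}}^{k}\tfrac{1}{N}\tr G(z_l)$ is bounded by $N^{c\delta}/\sqrt{\theta_i\theta_j}$, and thus $\absa{\partial_{ij}^{3}\tilde F}\prec N^{c\delta}$ as required. The interpolating matrix $\bm\theta H_s$ differs from $H_s$ only by rescaling a single entry, so the local law persists for $\bm\theta H_s$ uniformly in $\bm\theta$ via the resolvent identity \eqref{eq: off diagonal 1}; assembling the $1/\sqrt{q}$ moment factor, the polynomial $N^{c\delta}$ derivative bound, and the $tN$ prefactor of Lemma \ref{comparison 1} yields the advertised error.
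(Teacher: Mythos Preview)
Your overall framework---apply Lemma~\ref{comparison 1} and reduce to a high-probability bound on $\partial_{ij}^{3}\tilde F$---is the same as the paper's, and your treatment of the first derivative $\partial_{h_{ij}}\tfrac{1}{N}\tr G=-\tfrac{2}{N}(G^{2})_{ij}$ via Ward plus the monotonicity of $\eta\mapsto\eta\,\im G_{ii}$ is fine. The gap is in the higher derivatives. After two or three $h_{ij}$-differentiations the trace produces terms such as $\tfrac{1}{N}G_{ii}(G^{2})_{jj}$ or $\tfrac{1}{N}G_{ij}G_{ji}(G^{2})_{ij}$, where some resolvent entries $G_{ab}$ with $a,b\in\{i,j\}$ sit \emph{outside} any contracted sum. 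Your only tool for such a loose factor is $|G_{ab}|^{2}\le \im G_{aa}/\eta$, and with $\im G_{aa}\prec N^{3\delta}/\theta_a$ and $\eta\ge N^{-1-\delta}$ this yields merely $|G_{ab}|\prec N^{(1+4\delta)/2}$, i.e.\ order $N^{1/2}$. Plugging this into, say, $\tfrac{1}{N}|G_{ii}|\,|(G^{2})_{jj}|$ gives $N^{1/2+c\delta}$, not $N^{c\delta}$; the third derivative is worse still. So the claimed bound $\partial_{ij}^{k}\tfrac{1}{N}\tr G\prec N^{c\delta}$ fails for $k\ge 2$, and the conclusion does not follow from Lemma~\ref{comparison 1}.

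The paper closes this gap by a different route: from the local law at scale $\eta_{0}=N^{-1+\delta}$ it first extracts bulk eigenvector delocalization $|u_k^{i}|^{2}\prec N^{-1+\delta}$ (via $\im G_{ii}(\lambda_k+\ii\eta_{0})\ge |u_k^{i}|^{2}/\eta_{0}$), and then bounds each entry directly by the spectral sum $|G_{jk}(E+\ii\eta)|\le\sum_\alpha |u_\alpha^{j}u_\alpha^{k}|/|\lambda_\alpha-z|$, splitting the bulk eigenvalues into dyadic shells around $E$ and using rigidity (again from the local law) to count eigenvalues in each shell. This yields the pointwise estimate $|G_{jk}(E+\ii\eta)|\prec N^{3\delta}$ for all $\eta\ge N^{-1-\delta}$, strong enough to control every loose factor in $\partial_{ij}^{3}\tilde F$. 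The monotonicity trick alone cannot substitute for this, because it controls only $\im G_{ii}$ and not $|G_{ij}|$ at the required strength.
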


\begin{proof}
We consider only the $n=1$ case for simplicity.  We want to show that 
\eq{
	\absa{\E[F\left( \frac{1}{N} \tr G_t(z_1) \right) -\E[F\left( \frac{1}{N} \tr G_0(z_1) \right)]]}\leq \frac{Ct N^{1+c\delta}}{\sqrt{q}}\,.
}
We need to compute the derivative of $\frac{1}{N}\tr G$ with respect to the $(i,j)$-th entry.  Note that 
\eq{
	\absa{\partial_{ab}^k \frac{1}{N} \tr G }\leq \frac{1}{N} \sum_k \absa{\tr (G J )^kG}\,,
}
where $J$ is the matrix whose entries all vanish except the $(a,b)$ and $(b,a)$-th entries. The right hand side is a sum of products of off-diagonal entries of $G$ and the number of terms is order $N$.  Thus, we need a bound on the off-diagonal entries of $G$, down to the scale $\eta \geq N^{-1-\delta}$.  In order to get this bound, we first derive a delocalization bound for the eigenvectors in the bulk. For $\lambda_k \in I$, we set $E=\lambda_k$
\eq{
	\im G_{ii}=\sum_\alpha \frac{ \absa{u_\alpha^i}^2\eta}{\absa{\lambda_\alpha-z}^2} \geq \frac{\absa{u_k^i}^2}{\eta}\,.
}
By Theorem \ref{thm: local law Ht}, $\im G_{ii}$ is bounded uniformly in $\cal{D}_\delta^I$, on an event with probability $1-N^{-D}$.  Hence on this event, we have $\absa{u_k^i}^2\leq N^{-1+\delta}$ as long as $\lambda_k\in I$.  This still holds if we replace $I$ with a slightly bigger interval $\tilde{I}$ (independent of $N$), because of the analyticity of $m$ near $I$.  Now we estimate, for $E\in I$, that
\eq{
	\absa{G_{jk}(E+i\eta)} \leq \sum_i \frac{\absa{u_i^j u_i^k}}{\absa{\lambda_i-z}} =\sum_{\lambda_i \in \tilde{I}}\frac{\absa{u_i^j u_i^k}}{\absa{\lambda_i-z}} +\sum_{\lambda_i \notin \tilde{I}}\frac{\absa{u_i^j u_i^k}}{\absa{\lambda_i-z}} \leq CN^{-1+\delta} \sum_{\lambda_i \in \tilde{I} }\frac{1}{\absa{\lambda_i-z}}+C\,.
}
We define a dyadic decomposition of the eigenvalues lying in $\tilde{I}$: let
\eq{
	U_0  \deq\{ j:\lambda_j\in \tilde{I},\absa{\lambda_j-E} \leq N^{-1+\delta}\}\,, 
}
\eq{
	U_n \deq \{j: \lambda_j\in \tilde{I},2^{n-1} N^{-1+\delta} < \absa{\lambda_j -E} \leq 2^n N^{-1+\delta}\}\,, 
}
for $1\leq n \leq \log_2 N$, and
\eq{
	U_{\log_2 N+1}\deq\{ j:\lambda_j\in \tilde{I}, N^\delta < \absa{\lambda_j -E}\}\,.
}
With probability $1-N^{-D}$, we have $\absa{U_n} \leq C2^n N^\delta$, for $0\leq n \leq \log_2 N$,  which can be seen below.  For any $J\in \tilde{I}$ such that $\absa{J}\geq N^{-1+\delta}$, we set $E'$ to be the middle point of $J$ and $\eta=\frac{\absa{J}}{2}$, then we have
\[
	C\geq \im m(E+i\eta)=\frac{1}{N}\sum_i \frac{\eta}{(\lambda_i -E)^2 +\eta^2} \geq \frac{1}{N}\sum_{\lambda_i\in J} \geq \frac{1}{\absa{J}{N}}\#\{j: \lambda_j \in J\}\,.
\]
The dyadic decomposition enables us to estimate
\eq{
	\absa{G_{jk}(E+i\eta)} \leq CN^{-1+\delta} \sum_n \sum_{i\in U_n}\frac{1}{\absa{\lambda_i -E -i\eta}} \leq CN^{3\delta}\,, 
}
for $\eta\ge N^{-1-\delta} $ and therefore, $\frac{1}{N} \absa{\tr \partial_{ab}^k G}\leq CN^{3(k+1)\delta}$ on an event with probability $1-N^{-D}$.  Outside this event, we have a deterministic bound $\frac{1}{N} \absa{\tr \partial_{ab}^k G} \leq C N^{3(1+\delta)}$. Finally, we apply Lemma \ref{comparison 1} to complete the proof. 
\end{proof}

Lemma \ref{comparison 2} readily implies the following comparison theorem for correlation functions:
\begin{theorem}\label{comparison}
Assume that $I$ is a bulk interval. Let $H_t$ be defined as in \eqref{eq: Ht} and $T=N^{-1+\e}$ with $\e$ arbitrarily small. Let $\rho_0^{(n)}$ and $\rho_T^{(n)}$ be the n-point correlation functions of the eigenvalues of $H$ and $H_T$, then for any  test function $O\in C^\infty_0(\R^n)$, we have 
\eq{
	\lim_{N\rightarrow \infty } \int_{\R^n} O(\alpha_1,\dots, \alpha_n) \left[ \rho_0^{(n)} \left( E+\frac{\alpha_1}{N},\dots, E+\frac{\alpha_n}{N}\right)-\rho_T^{(n)} \left( E+\frac{\alpha_1}{N},\dots, E+\frac{\alpha_n}{N}\right)\right]=0\,.
}
\end{theorem}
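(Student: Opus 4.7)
The plan is to reduce Theorem \ref{comparison} to the Green's function comparison already established in Lemma \ref{comparison 2}. The bridge is the standard identification of averaged $n$-point correlation functions with expectations of a smooth functional of traces of resolvents at spectral parameter just below the typical eigenvalue spacing, so that the statement follows by taking $t=T=N^{-1+\e}$ in Lemma \ref{comparison 2} and tracking the quantitative error against $q=N^\kappa$.

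First, I would fix $\eta=N^{-1-\delta}$ for a small positive $\delta$ (to be chosen after $\e$) and use the Poisson-kernel approximation $\frac{1}{\pi}\Im\tfrac{1}{N}\tr G_t(E+\ii\eta)\approx\frac{1}{N}\sum_j\delta_\eta(\lambda_j-E)$. Integrating by parts against the smooth test function $O$ yields the representation
\eq{
\int_{\R^n}O(\alpha_1,\ldots,\alpha_n)\,\rho_t^{(n)}\pB{E+\tfrac{\alpha_1}{N},\ldots,E+\tfrac{\alpha_n}{N}}\dd\alpha_1\cdots\dd\alpha_n = \E F_t + o(1),
}
where $F_t=F\pB{\tfrac{1}{N}\tr G_t(z_1),\ldots,\tfrac{1}{N}\tr G_t(z_n)}$, each $z_j=E+\alpha_j/N\pm\ii\eta$ (with signs chosen to extract imaginary parts), and $F$ inherits smoothness and compact support from $O$. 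The $o(1)$ error is controlled by applying Theorem \ref{thm: local law Ht} to both $H_0$ and $H_T$: the local law at scale $\eta$ ensures that the Poisson smoothing at this scale differs from the genuine correlation function by $o(1)$ after averaging against $O$.

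Second, I would verify the derivative hypotheses of Lemma \ref{comparison 2}: $F$ is smooth and compactly supported, so on the high-probability event where the traces of $G_t(z_j)$ remain $O(N^\omega)$ its derivatives grow only polynomially in $N^\omega$, while outside that event the deterministic bound $\absa{\tfrac{1}{N}\tr G}\leq \eta^{-1}\leq N^2$ supplies the required fallback estimate. Lemma \ref{comparison 2} then yields
\eq{
\absa{\E F_T-\E F_0}\leq\frac{C\,T\,N^{1+c\delta}}{\sqrt{q}}=\frac{C\,N^{\e+c\delta}}{\sqrt{q}},
}
which tends to $0$ as $N\to\infty$ provided $\e+c\delta<\kappa/2$, where $q=N^\kappa$; this is achievable by taking $\e$ and $\delta$ small enough. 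Combined with the smoothing step, this proves the theorem.

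The main obstacle is the first step: the quantitative replacement of the averaged correlation function by an expectation of a smooth functional of resolvent traces at scale $\eta=N^{-1-\delta}$, uniformly in $t\in[0,T]$. The device is standard (cf.\ the analogous arguments in \cite{Erdos2012a} and \cite{Landon2015}), but it does require the local law of Theorem \ref{thm: local law Ht} to hold uniformly along the OU flow at every intermediate time, together with a careful control of the smoothing error at a scale slightly finer than the eigenvalue spacing. Once this reduction is in place, the comparison step reduces directly to Lemma \ref{comparison 2}.
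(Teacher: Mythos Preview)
Your proposal is correct and follows essentially the same route as the paper: reduce the correlation-function comparison to a Green's-function comparison via the standard Poisson-kernel/Helffer--Sj\"ostrand machinery, then invoke Lemma \ref{comparison 2}. The paper's own proof is a one-liner citing Theorem~2.1 of \cite{Erdos2012b} (which packages exactly the reduction you sketch) together with Lemma \ref{comparison 2}; you have simply unpacked that citation.
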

\begin{proof}
This theorem is a consequence of Theorem 2.1 in \cite{Erdos2012b} and Lemma \ref{comparison 2} .
\end{proof}

It remains to show that $H_T$ has bulk universality.  We note that since $s_{ij} \geq c/N$, $H_T$ has a small Gaussian component, that is, $H_T=\hH+c\sqrt{T}G$.  We state without proof the local law for $\hH$ here:

\begin{theorem}[The local law for $\hH$]\label{thm: local law hH}
Assume that $I\subset \R$ is a bulk interval. Fix positive $\delta $ and define an $N$-dependent spectral domain $\cal{D}_\delta^I\deq \ha{ E+\ii\eta: E\in I, N^{\delta-1}\leq \eta \leq10}$. Let $(\hat{h}_{ij})$ be defined as above.  Therefore, on the domain $\cal{D}_\delta^I$, we have $\Lambda\prec \Phi$.
\end{theorem}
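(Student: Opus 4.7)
The plan is to mirror the proof of Theorem \ref{thm: local law} almost verbatim, exploiting the fact that $\hH$ differs from $H_T$ only by a small-variance GOE piece. Since $H_T = \hH + c\sqrt{T}\,G$ with $G$ a standard GOE and $T=N^{-1+\e}$, the entries of $\hH$ are independent (up to the symmetry constraint), centered, and have variances $\hat{s}_{ij} = s_{ij}-c^2 T/N$. Because each $\gamma_i^{(k)}\geq 1$ we have $s_{ij}\geq 1/N$, so $\hat{s}_{ij}\geq (1-c^2T)/N>0$ and in fact $\hat{s}_{ij}=s_{ij}(1+O(T))$ uniformly. The moment bounds \eqref{eq: moment assumptions} transfer to $\hH$ up to a harmless multiplicative constant, since subtracting an independent small Gaussian cannot inflate the $k$-th absolute moment by more than a factor depending only on $k$.

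First I would write down the self-consistent system for $\hH$. Applying the Schur complement formula exactly as in Section \ref{sec: proof of local law}, one obtains
\eqs{
	\frac{1}{\hat{G}_{ii}} = -z-\sum_k \hat{s}_{ik}\hat{G}_{kk}+\hat{R}_i = -z-\sum_k s_{ik}\hat{G}_{kk}+c^2T\,\hat{m}_N+\hat{R}_i,
}
where $\hat{R}_i$ is the analogue of the error term \eqref{eq: error}. Thus the only change in the self-consistent system is an additional inhomogeneity of size $O(T)$, which is absorbed into the perturbation $(r_1,r_2)$ in Theorem \ref{thm: stability}. By that stability statement, the solution of the modified system is an $O(T)$ perturbation of $(u^{(k)},m)$, so any bulk interval $I$ for $m$ remains a bulk interval for the modified $\hat{m}$ once $N$ is large enough; in particular, the quantities $g_i$ governing the definition of $\hat{\Lambda}$ for $\hH$ agree with those for $H$ up to $O(T)$.

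Next I would repeat the argument of Section \ref{sec: proof of local law} essentially line by line. The large-deviation input of Lemma \ref{lem: LDE} applies to $\hH$ since only the moment bounds and independence are used. The computation of Lemma \ref{lem: forbidden} then yields the conditional implication $\{\hat{\Lambda}\leq N^{-c}\}\Rightarrow \hat{\Lambda}\prec\Phi$: the extra $c^2T\hat{m}_N$ term is of order $T\ll\Phi$ and is therefore harmless, and the stability input is supplied by Theorem \ref{thm: stability}. The \emph{a priori} bound $\hat{\Lambda}\prec\Phi$ at $\eta=10$ is obtained exactly as in Lemma \ref{lem: initial estimate}, invoking Theorem \ref{thm: stability initial} in place of Theorem \ref{thm: stability}. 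The lattice/continuity argument at the end of Section \ref{sec: proof of local law} then pushes the bound down to the full spectral domain $\cal{D}_\delta^I$, yielding $\hat{\Lambda}\prec\Phi$ throughout.

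The only real subtlety, and what I expect to be the sole genuine obstacle, is that the perturbed profile $\hat{s}_{ij}$ is no longer strictly of the low-rank form \eqref{eq: s}: it is a rank-$r$ matrix minus a scalar multiple of the all-ones matrix. This does not affect the derivation of the SCE (one simply picks up the extra $c^2T\hat{m}_N$ term displayed above, which is treated as a perturbation), and the stability theorem tolerates the $O(T)$ perturbation. If one insists on preserving the strict rank-$r$ structure, an alternative route is to absorb the correction into an augmented $(r+1)$-dimensional system with one signed component; the Brouwer/Perron-Frobenius argument of Section \ref{sec: solution} still goes through because $T$ is small enough to keep the relevant spectral radius strictly below one.
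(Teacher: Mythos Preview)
Your proposal is correct and is precisely what the paper intends: the paper \emph{states this theorem without proof}, the evident reason being that the argument is a verbatim repetition of the proof of Theorem \ref{thm: local law} (and Theorem \ref{thm: local law Ht}) once one checks that the entries of $\hH$ inherit the moment bounds \eqref{eq: moment assumptions}. Your identification and handling of the one genuine wrinkle --- that $\hat s_{ij}=s_{ij}-c^2T/N$ is no longer strictly of the rank-$r$ form \eqref{eq: s}, producing an extra $c^2T\,\hat m_N$ term in the Schur expansion --- is the right way to close the gap: since $T=N^{-1+\e}\ll\Phi$ and $\theta_i\ge r$, this term is $O(T)\prec\theta_i\Phi$ and is absorbed into $R_i$ just like the other error contributions, after which Theorem \ref{thm: stability} applies unchanged.

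One small inaccuracy worth flagging: your phrase ``subtracting an independent small Gaussian cannot inflate the $k$-th absolute moment'' is not quite the right picture, since $\hH$ is not $H_T$ minus an independent Gaussian but rather the \emph{complementary} summand in $H_T=\hH+\vartheta G$. The clean way to get the moment bounds is to use the explicit Ornstein--Uhlenbeck representation $\hat h_{ij}=e^{-T/(2Ns_{ij})}h_{ij}(0)+W_{ij}^{(1)}$ with $W_{ij}^{(1)}$ a centered Gaussian of variance at most $T/N$; then $\E|\hat h_{ij}|^k\le 2^{k-1}\big(\E|h_{ij}(0)|^k+C_k(T/N)^{k/2}\big)$, and the Gaussian contribution is dominated by $s_{ij}N^{-1}q^{1-k/2}$ for $\e$ small enough. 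This does not affect the validity of your argument.
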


\begin{proof}[Proof of Theorem \ref{thm: bulk universality}]
By Theorem \ref{optimal speed} and Theorem \ref{thm: local law hH}, the conclusion of Theorem \ref{thm: bulk universality} holds for $H_T$ in place of $H$.  Then, by Theorem \ref{comparison}, the same conclusion holds for $H$.
\end{proof}

\section{Adjacency matrix without random signs} \label{sec: 01 model}
In this section, we consider a different matrix ensemble derived from the adjacency matrix of a random graph without random signs (recall Equation \eqref{eq: without random signs}) and outline the proof of a local law and bulk universality. We consider $A=(a_{ij})$ such that
\eq{\label{eq: aij definition 01}
	\P\qa{ a_{ij} = 1 } = \frac{qs_{ij}}{N} \quad\text{and}\quad  \P[ a_{ij}=0 ] = 1-\frac{qs_{ij}}{N}\,,
}
where $s_{ij}$ has the form \eqref{eq: s}. Note that $\E a_{ij}\neq 0$, so we centralize $A$ by subtracting its expectation $\E A=\pa{\frac{qs_{ij}}{N}}_{i,j=1}^{N}$ and normalize by $q^{-1/2}$. We denote the centered, normalized entries by $h_{ij}$.  Thus, $h_{ij}$ has law
\eq{\label{eq: 01 law}
	\P\qa{ h_{ij} = \frac{1}{\sqrt{q}}\pa{ 1-\frac{qs_{ij}}{N}}  } = \frac{qs_{ij}}{N}  \quad\text{and}\quad  \P[ h_{ij}=-\frac{1}{\sqrt{q}}  \frac{qs_{ij}}{N} ] = 1-\frac{qs_{ij}}{N} \,.
}

As before, using the Schur complement formula \eqref{eq: schur}, we find the equation $G_{ii}^{-1}=-z-\sum_k s_{ik} G_{kk} +\tilde{R}_i$, where the error term $\tilde{R}_i$ is defined as
\eq{
	\tilde{R}_i\deq h_{ii}+\sum_k s_{ik} \frac{G_{ik}G_{ki}}{G_{ii}}-Q_i \sum_{k,l}^{(i)}h_{ik}G_{kl}^{(i)}h_{li}-\frac{q}{N^2}\sum_k s_{ik}^2G_{kk} \,.
}
Thus, we get the same self-consistent equation for $u$, neglecting the error term.  We need to bound the additional error term $\frac{q}{N^2}\sum_k s_{ik}^2G_{kk} $.  We have, on the event $\phi \Lambda \prec N^{-c}$ (see Lemma \ref{lem: forbidden}), that
\eq{
	\tilde{R}_i \prec \theta_i \frac{q\theta_i}{N} \leq \ga_i \sqrt{\frac{q\theta_i^2}{N}} \sqrt{\frac{q}{N}}\leq \theta_i \sqrt{\frac{q}{N}}\,.
}
If $q\leq \sqrt{N}$, then $\tilde{R}_i \prec  \frac{\theta_i}{\sqrt{q}}\prec \theta_i \Phi$. Once we have this bound, together with large deviation bounds as in Lemma \ref{lem: LDE}, we readily get the local law, repeating the proof of Theorem \ref{thm: local law}.  In summary, we have Theorem \ref{thm: local law 3}.

\begin{theorem}\label{thm: local law 3}
Assume that $q=N^\kappa$ with $\kappa\leq 1/2$. Let $h_{ij}$ be defined as in \eqref{eq: 01 law}.  Assume that $I\subset \R$ is a bulk interval. Fix positive $\delta$ and define an $N$-dependent spectral domain $\cal{D}_\delta^I\deq \ha{ E+\ii\eta: E\in I, N^{\delta-1}\leq \eta \leq10}$, then on this domain $\Lambda\prec \Phi$.
\end{theorem}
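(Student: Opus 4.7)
The strategy is to adapt the proof of Theorem \ref{thm: local law} in Section \ref{sec: proof of local law} essentially verbatim, with the single modification that the Schur complement identity now carries an additional deterministic error term. Indeed, the entries \eqref{eq: 01 law} still satisfy the moment assumptions \eqref{eq: moment assumptions}, so the large deviation estimates of Lemma \ref{lem: LDE} apply unchanged. The only new feature is that the conditional variance $\E[h_{ij}^{2} \mid H^{(i)}] = \frac{s_{ij}}{N}\bigl(1 - \tfrac{qs_{ij}}{N}\bigr)$ differs from $\frac{s_{ij}}{N}$ by a term of order $qs_{ij}^{2}/N^{2}$, which, after applying \eqref{eq: schur}, gives
\[
    \frac{1}{G_{ii}} = -z - \sum_k s_{ik} G_{kk} + \tilde R_i,
    \qquad \tilde R_i = R_i - \frac{q}{N^{2}}\sum_k s_{ik}^{2} G_{kk},
\]
with $R_i$ as in \eqref{eq: error}.

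On the a priori event $\phi\Lambda \prec N^{-c}$ used in Lemma \ref{lem: forbidden}, the random piece $R_i$ has already been shown to satisfy $\phi R_i \prec \theta_i \Phi$, and exactly the same computation goes through here because it relies only on \eqref{eq: moment assumptions} together with Ward's identity. The new deterministic piece is bounded using $\phi |G_{kk}| \prec 1/\theta_k$, the constraint $qs_{ij}\leq 1$, and the second-moment assumption on $\gamma^{(k)}_i$; a direct estimate yields
\[
    \phi \cdot \frac{q}{N^{2}}\sum_k s_{ik}^{2} G_{kk} \;\prec\; \theta_i\,\sqrt{\frac{q}{N}}.
\]
The hypothesis $\kappa\leq 1/2$ then gives $\sqrt{q/N}\leq 1/\sqrt{q}\leq \Phi$, so this contribution is $\prec \theta_i\Phi$ and we conclude $\phi\tilde R_i \prec \theta_i\Phi$.

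With this error bound in hand, the rest of the proof proceeds exactly as in Section \ref{sec: proof of local law}. The stability theorem (Theorem \ref{thm: stability}) gives $\phi\theta_i|G_{ii}-g_i|\prec \Phi$, hence $\Lad\prec \Phi$; off-diagonal entries are handled by iterating \eqref{eq: off diagonal 2} to obtain \eqref{eq: lambdao} and applying part (ii) of Lemma \ref{lem: LDE} together with Ward's identity, yielding $\Lao\prec \Phi$. The additional deterministic piece in $\tilde R_i$ does not enter the off-diagonal bound. The initial large-$\eta$ estimate at $\eta=10$ is a direct analogue of Lemma \ref{lem: initial estimate}, because without any a priori smallness one still has $\frac{q}{N^{2}}\sum_k s_{ik}^{2}|G_{kk}|\leq \frac{C\theta_i}{N^{2}\eta}\leq \theta_i\Phi$ on $\{\eta = 10\}$, and Theorem \ref{thm: stability initial} closes this step. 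Finally, the lattice and continuity argument that concludes the proof of Theorem \ref{thm: local law} propagates the bound from $\eta=10$ down to $\eta\geq N^{\delta-1}$ without modification.

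The main obstacle is precisely the sparsity restriction $\kappa\leq 1/2$: it is used exactly once, to absorb the new deterministic error $\theta_i\sqrt{q/N}$ into $\theta_i\Phi$. If $\kappa>1/2$ this term dominates $1/\sqrt{q}$ and the naive self-consistent equation \eqref{eq: sce general} no longer matches $G_{ii}$ at scale $\Phi$; one would then need to incorporate the shift $-\frac{q}{N^{2}}\sum_k s_{ik}^{2} G_{kk}$ into a corrected self-consistent system and redo the stability analysis, which lies outside the scope of the present theorem.
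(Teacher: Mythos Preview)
Your proposal is correct and follows essentially the same route as the paper's own argument in Section \ref{sec: 01 model}: identify the single additional deterministic term $\frac{q}{N^{2}}\sum_k s_{ik}^{2}G_{kk}$ in the Schur complement expansion, bound it by $\theta_i\sqrt{q/N}$ on the a priori event using $qs_{ii}\le 1$, invoke $\kappa\le 1/2$ to absorb this into $\theta_i\Phi$, and then rerun the proof of Theorem \ref{thm: local law} verbatim. Your closing remark on why $\kappa\le 1/2$ is sharp for this argument also matches the paper's own comment following Theorem \ref{thm: bulk universality 01}.
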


As in the original model, the universality in the bulk holds for this model.  We state the universality theorem without proof, since the proof for Theorem \ref{thm: bulk universality} can be reproduced almost verbatim.

\begin{theorem}\label{thm: bulk universality 01}
Assume $q=N^\kappa$ with $\kappa \in(0,\frac{1}{2}]$.   Assume that $I\subset \R$ is a bulk interval.  Let $\rho^{(n)}$ be the n-point correlation functions of the eigenvalues of $H$ and $\rho^{(N)}$ be the density on $I$.  Let $O\in C^\infty_0(\R^n)$ be a test function. Fix a parameter $b=N^{c-1}$ for arbitrarily small $c$. Therefore, we have
\eq{\begin{split}
	\lim_{N\rightarrow \infty} \int_{E-b}^{E+b} \int_{\R^n} O(\alpha_1,\dots,\alpha_n) \left[ \frac{1}{\rho(E)^n}\rho^{(n)} \left( E'+\frac{\alpha_1}{N\rho(E)},\dots,E'+\frac{\alpha_n}{N\rho(E)}\right)\right.\\\left.
	-\frac{1}{(\rho_{sc}(E))^n} \rho_{GOE}^{(n)} \left( E''+\frac{\alpha_1}{\rho_{sc}^{(N)}(E)},\dots,E''+\frac{\alpha_n}{\rho_{sc}^{(N)}(E)}\right)\right]\dd \alpha_1\dots\dd\alpha_n \frac{\dd E'}{2b} =0
	\end{split}
}
for any $E''\in (-2,2)$.
\end{theorem}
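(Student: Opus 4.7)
The plan is to mimic the three-step strategy used for Theorem~\ref{thm: bulk universality} (dynamics $\to$ comparison $\to$ Landon--Yau input), taking care of the extra arithmetic arising from the nonzero mean of the raw $\{0,1\}$ entries. Let $H_0 = H$ with entries given by \eqref{eq: 01 law}, and define the Ornstein--Uhlenbeck interpolation $H_t$ by the same SDE \eqref{eq: Ht}, driven by a symmetric Brownian matrix $B$ with covariance profile matching $S$. This choice preserves the variances $\E |h_{ij}(t)|^2 = s_{ij}$ for all $t\geq 0$ while also preserving the tail bounds \eqref{eq: moment bounds}, so that the derivation in Section~\ref{sec: proof of local law} goes through verbatim except for an extra deterministic term coming from the nonzero expectation: on the event $\phi\Lambda \prec N^{-c}$ one has
\eq{
\tilde R_i(t) \prec \theta_i\Phi + \frac{q}{N^2}\sum_k s_{ik}^2 G_{kk}(t),
}
and the second term is $\prec \theta_i\sqrt{q/N} \leq \theta_i\Phi$ precisely because $\kappa\leq 1/2$. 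Running the continuity argument of Theorem~\ref{thm: local law} then yields the local law $\Lambda\prec \Phi$ for $H_t$ uniformly in $t\in[0,T]$ with $T=N^{-1+\e}$ on any spectral domain $\cal D_\delta^I$ associated to a bulk interval $I$.

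Next I would establish the comparison step. Following the proof of Lemma~\ref{comparison 2}, the local law for $H_t$ together with the delocalization bound $|u_k^i|^2 \prec N^{-1+\delta}$ (extracted from the uniform bound on $\im G_{ii}$) gives the $N^{C\delta}$ control on off-diagonal resolvent entries down to the scale $\eta\geq N^{-1-\delta}$ needed to bound $\partial^k_{ab}\tr G$. Applying Lemma~\ref{comparison 1} to test functions $F$ of $\frac{1}{N}\tr G_t(z_j)$ yields, for $T=N^{-1+\e}$ and $\kappa\leq 1/2$,
\eq{
\Big|\E F\Big(\tfrac{1}{N}\tr G_T(z_1),\dots\Big) - \E F\Big(\tfrac{1}{N}\tr G_0(z_1),\dots\Big)\Big| \leq \frac{CTN^{1+c\delta}}{\sqrt q} \to 0 \,.
}
The verification of the hypotheses of Lemma~\ref{comparison 1} requires the moment bound $N s_{ij}^{-1}\E|h_{ij}(s)|^3 \lesssim q^{-1/2}$, which is inherited from \eqref{eq: moment bounds}. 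Combining this with Theorem~2.1 of \cite{Erdos2012b} as in Theorem~\ref{comparison} yields the comparison of $n$-point correlation functions of $H_0$ and $H_T$.

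Then I would perform the Gaussian-divisibility decomposition. Because $s_{ij}\geq c/N$ (from the assumption $\gamma_i^{(k)}\geq 1$), one can split
\eq{
H_T = \hH + c\sqrt{T}\,G,
}
where $G$ is a standard GOE, $\hH$ is independent of $G$, and the entries of $\hH$ have variance profile $s_{ij}-c^2T/N$ that still satisfies the hypotheses of Theorem~\ref{thm: local law 3}. Thus $\hH$ obeys the same local law on $\cal D_\delta^I$, so its eigenvalue density is $(l,G^*)$-regular at each $E\in I$ with $l=N^{-1+\e}$ and $G^*$ some $N^{-\e'}$. Feeding this into Theorem~\ref{optimal speed} with the choice of $T$ above yields bulk universality for $H_T$ in the precise form stated in Theorem~\ref{thm: bulk universality 01}. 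Finally, transferring this through the comparison result of the previous step gives the conclusion for $H_0 = H$.

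The main obstacle is the additional error $\frac{q}{N^2}\sum_k s_{ik}^2 G_{kk}$ in the Schur complement, which is the only genuinely new analytic difficulty compared to the random-sign case. Its size is controlled only when $q \leq N^{1/2}$, which is exactly the regime $\kappa\leq 1/2$ assumed in the statement; if one tried to push beyond this, the mean subtraction becomes visible at leading order in the self-consistent equation and the limit shape would no longer be governed by \eqref{eq: sce general} alone. Apart from this bookkeeping, every step of the proof of Theorem~\ref{thm: bulk universality}—the local law, the delocalization estimate, the Green's function comparison, and the Landon--Yau input—applies mutatis mutandis.
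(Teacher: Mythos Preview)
Your proposal is correct and follows exactly the route the paper indicates: the paper states Theorem~\ref{thm: bulk universality 01} without proof, remarking only that ``the proof for Theorem~\ref{thm: bulk universality} can be reproduced almost verbatim,'' and your three-step outline (local law for $H_t$ uniform in $t$, Green's function comparison via Lemmas~\ref{comparison 1}--\ref{comparison 2}, then Landon--Yau after the Gaussian splitting) is precisely that reproduction, with the extra Schur-complement term $\frac{q}{N^2}\sum_k s_{ik}^2 G_{kk}$ correctly identified as the only new input and correctly bounded using $\kappa\le 1/2$. One small inaccuracy: the OU flow \eqref{eq: Ht} does not preserve $\E|h_{ij}(t)|^2=s_{ij}$ here, since the initial variance in the centered $0$--$1$ model is $\hat s_{ij}=s_{ij}(1-qs_{ij}/N)$ rather than $s_{ij}$; but the variance stays in $[\hat s_{ij},s_{ij}]$ throughout $[0,T]$, the moment bounds \eqref{eq: moment bounds} still hold, and the same extra error term is absorbed into $\tilde R_i(t)\prec\theta_i\Phi$, so the argument is unaffected.
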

\begin{remark}

With some more effort, we can remove the restriction $\kappa\leq 1/2$ in Theorem \ref{thm: local law 3} and Theorem \ref{thm: bulk universality 01}.  The idea is to solve a general self-consistent equation 
\eq{
	g_i=\frac{1}{-z-\sum_{k=1}^N \hat{s}_{ik}g_k}
}
for $i\in \N_{N}$, where $\hat{s}_{ik}=s_{ik}(1-\frac{qs_{ik}}{N})$.  We refrain from proving this in any detail here, as the most interesting case in applications is when $\kappa$ is small, that is, when the graph is very sparse.
\end{remark}

\appendix
\section{General self-consistent systems}\label{sec: general sce}
In this section, we forget about $N$ and consider the general self-consistent equation 
\be\label{sce}
	g_z(x)=-\frac{1}{z+(Sg_z)(x)}
\ee
for $ z\in \C^+$. Equation \eqref{sce} has earlier been studied in \cite{Anderson2008}, \cite{Girko} and \cite{Helton2007}.  Recently, it was extensively studied in \cite{Ajanki2015a}, and named quadratic vector equation (\textsc{qve}).   We shall prove a theorem about the uniqueness and existence for the solution.  Our theorem is contained in much more general results in  \cite{Ajanki2015a} and\cite{Helton2007}.  For the readers' convenience, we present here a different but short proof in a much simpler setting.

Denote $m(z)=\int_0^1 g(x)\dd x$. We let $x\in [0,1]$, $g_z \in L^2([0,1],\C^+)$ and $Sg(x):= \int_0^1 s(x,y)g(y)\dd y$ with $s\in L^2([0,1]^2)$, $s\geq 0$ on $[0,1]^2$.  

\begin{theorem}[Existence and uniqueness]\label{thm: solution}
For any fixed $z\in\C^+$, there uniquely exists a $L^2$ function $g_z:[0,1]\rightarrow \C^+$ that satisfies equation \eqref{sce}.  Moreover, $g$ as a function from $\C^+\rightarrow L^2[0,1]$ is holomorphic.
\end{theorem}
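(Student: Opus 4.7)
The plan is to mirror the strategy of Theorem~\ref{thm: solution}, carrying it over from finite to infinite dimensions.  For \textbf{existence}, I would apply the Schauder--Tychonoff fixed-point theorem on the convex, bounded set
\eqs{
	K\deq \hb{g\in L^\infty([0,1])\st \im g(x)\geq 0\ \text{a.e.},\; \norm{g}_\infty\leq 1/\im z}
}
viewed inside $L^2([0,1])$.  The map $F(g)(x)\deq -1/(z+(Sg)(x))$ sends $K$ into itself because $\im(z+Sg(x))\geq \im z>0$ forces $\absa{F(g)(x)}\leq 1/\im z$ and $\im F(g)(x)\geq \im z/\absa{z+Sg(x)}^2>0$.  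Moreover $K$ is weakly compact in $L^2$ (bounded, convex, strongly closed), and $F$ is weakly-to-strongly continuous: if $g_n\wkto g$ then for a.e.\ $x$ the slice $s(x,\cdot)$ lies in $L^2$ by Fubini, so $(Sg_n)(x)\to (Sg)(x)$ pointwise a.e., and dominated convergence with dominator $1/\im z$ upgrades this to $L^2$-convergence of $F(g_n)$.  Schauder--Tychonoff then produces a fixed point $g_z\in K$, and the equation itself forces $\im g_z(x)>0$ a.e.

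For \textbf{uniqueness}, let $g,\tilde g$ both solve \eqref{sce}.  Subtracting gives $g-\tilde g=S(g-\tilde g)/((z+Sg)(z+S\tilde g))$, so $\phi\deq \absa{g-\tilde g}$ satisfies the pointwise bound $\phi\leq T\phi$, where
\eqs{
	T\phi(x)\deq \frac{(S\phi)(x)}{\absa{z+Sg(x)}\absa{z+S\tilde g(x)}}
}
is a compact positive integral operator (Hilbert--Schmidt, since its kernel is dominated by $s(x,y)/(\im z)^2$).  To force $\phi\equiv 0$ it suffices to show $r(T)<1$, which I would do by exhibiting a strict super-solution.  Taking imaginary parts of \eqref{sce} gives $\absa{z+Sg}^2\im g=\im z+S(\im g)$ and analogously for $\tilde g$.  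Setting $\psi\deq \sqrt{\im g\cdot \im\tilde g}$ and using the pointwise Cauchy--Schwarz inequality $(S\sqrt{uv})(x)\leq \sqrt{(Su)(x)(Sv)(x)}$, one gets
\eqs{
	\absa{z+Sg(x)}\absa{z+S\tilde g(x)}\,\psi(x)=\sqrt{(\im z+S\im g)(\im z+S\im\tilde g)}(x)>\sqrt{(S\im g)(S\im\tilde g)}(x)\geq (S\psi)(x),
}
the strict inequality coming from the $\im z>0$ slack.  Hence $(I-T)\psi>0$ pointwise.  Pairing with the non-negative left Krein--Rutman eigenfunctional $\mu^*$ of the compact positive operator $T$ yields $(1-r(T))\avga{\mu^*,\psi}>0$, so $r(T)<1$.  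Iterating $\phi\leq T^n\phi$ and using $\norm{T^n}_{\mathrm{op}}\to 0$ then forces $\phi\equiv 0$.

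For \textbf{holomorphicity in $z$}, I would apply the holomorphic implicit function theorem on Banach spaces to $\cal{F}(g,z)\deq g+(z+Sg)^{-1}$, which is jointly holomorphic on the domain where $z+Sg$ stays bounded away from $0$.  The Fréchet derivative in $g$ at $(g_z,z)$ is $I-T_g$ with $T_g h\deq Sh/(z+Sg)^2$ (complex denominator), and the pointwise domination $\absa{T_g^n h}\leq T_1^n\absa{h}$ with $T_1 h\deq Sh/\absa{z+Sg}^2$ gives $r(T_g)\leq r(T_1)<1$, where the last inequality follows from the same super-solution argument applied to $\im g$ alone, which satisfies $(I-T_1)\im g=\im z/\absa{z+Sg}^2>0$.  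Therefore $I-T_g$ is invertible and $g_z$ depends holomorphically on $z$.  The main obstacle is the Krein--Rutman step giving $r(T)<1$: one must verify that $T$ admits a non-negative left eigenfunctional $\mu^*$ whose pairing with $\psi$ is strictly positive, which ultimately rests on the compactness of $S$ together with the pointwise strict positivity of $\im g$ and $\im\tilde g$ enforced by the fixed-point equation itself.
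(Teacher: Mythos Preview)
Your argument is correct and follows a genuinely different route from the paper's.  For \emph{existence}, the paper deliberately avoids infinite-dimensional fixed-point theorems: it discretizes $[0,1]$ dyadically, applies Brouwer in $(\C^+)^{2^k}$ to obtain approximate solutions $g_k$, and then exploits the compactness of the Hilbert--Schmidt operator $S$ to extract a subsequence with $Sg_k$ converging strongly in $L^2$, from which the limit solution is read off.  Your Schauder--Tychonoff argument is more direct and bypasses the discretization entirely; the key extra ingredient you need is the weak-to-strong sequential continuity of $F$, which indeed follows from $s(x,\cdot)\in L^2$ for a.e.\ $x$.  For \emph{uniqueness}, the paper's organization differs slightly: it first proves $r(T)<1$ and $r(\hat T)<1$ separately for the \emph{symmetric} operators with kernels $s(x,y)/\absa{a(x)a(y)}$ and $s(x,y)/\absa{\hat a(x)\hat a(y)}$ (so that $r=\norm{\cdot}$), then sets $w=\sqrt{\absa{a\hat a}}\,\absa{g-\hat g}$ and bounds $w\leq \tfrac12(T+\hat T)w$ via Cauchy--Schwarz to reach a contradiction.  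Your variant builds the strict super-solution $\psi=\sqrt{\im g\,\im\tilde g}$ directly for the non-symmetric mixed operator; this is arguably cleaner but forces you to invoke a left Krein--Rutman eigenfunctional rather than just the operator norm, and you should note separately the trivial case $r(T)=0$ where Krein--Rutman does not apply.  Finally, the paper does not actually prove the holomorphicity assertion; your implicit-function-theorem argument, with the domination $\absa{T_g^n h}\leq T_1^n\absa{h}$ reducing invertibility of $I-T_g$ to $r(T_1)<1$, fills that gap.
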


\begin{proof}
\textbf{Existence.}
Our strategy is, we first find a sequence of functions $g_k$ satisfying the equation at dyadic points $x=\frac{l}{2^k}, 0\leq l \leq 2^k$.  Second, we show that the sequence is bounded in $L^2[0,1]$.  Third, we use the fact that $S$ is a compact operator to find a convergent subsequence. Fourth, the limit turns out to be the solution.

Define a map $\phi_z:\overline{(\C^+)^{2^k}}\rightarrow(\C^+)^{2^k}$ as follows: For a vector $\zeta=\{\zeta^l\}_{l=1}^{2^k}\in(\C^+)^{2^k}$,
define a staircase function 
\be
	h_\zeta(x)=\sum_{l=1}^{2^k} 1_{[\frac{l-1}{2^k},\frac{l}{2^k}]}(x) \zeta^l\,.
\ee
Define a projection operator $P_k$ on $L^2[0,1]$ such that for $v\in L^2[0,1]$,
\be
	(P_kv)(x)=-\fint_{\frac{l-1}{2^k}}^{\frac{l}{2^k}} v(t)\dd t, x\in \bigg{(}\frac{l-1}{2^k},\frac{l}{2^k} \bigg{]}\,.
\ee
The projection $P_k$ is actually an operator from $ L^2\rightarrow \C^{2^k}$.  So we define
\be\label{phi}
	\phi_z(\zeta)=P_k \left(\frac{-1}{z+S h_\zeta}\right)
\ee
The function $\phi_z$ is continuous on $\phi_z:\overline{(\C^+)^{2^k}}\rightarrow(\C^+)^{2^k}$ and $\absa{\phi_z}_\infty$ is bounded by $\frac{1}{\eta}$. By Brouwer's fixed point theorem, there exists a $\zeta^*\in \C^+$ such that $\phi_z(\zeta^*)=\zeta^*$ and that $\absa{\zeta^*}_\infty\leq \frac{1}{\eta}$.

We set $g_k:=h_{\zeta^*}$.  By \eqref{phi}, we have $g_k=P_k  \left(\frac{-1}{z+S g_k}\right)$. The function $g_k$ is bounded by $\frac{1}{\eta}$, hence is bounded in $L^2[0,1]$. We can find a subsequence, still denoted by $g_k$, such that $Sg_k\rightarrow \mathfrak{g}$ in $L^2$.  Note that
\be
	\begin{split}
	\norma{g_k-\frac{-1}{z+\fg}}&\leq \norma{P_k\left(\frac{-1}{z+S g_k}-\frac{-1}{z+\fg}\right)+(P_k-I) \left(\frac{-1}{z+\fg}\right)}\\
						&\leq c \norma{g_k-g}+\norma{(P_k-I) \left(\frac{-1}{z+\fg}\right)}\rightarrow 0\,.
	\end{split}
\ee
Thus, $g_k$ converges to $g:=\frac{-1}{z+S\fg}$, which satisfies $g=-\frac{1}{z+Sg}$.

\textbf{Uniqueness.}
Fix $z=E+i\eta\in\C^+$.  Assume that there are two different solutions $g$ and $\hg$.  
Denote 
\be\label{a}
	a(x)=z+(Sg)(x)
\ee
and $\hat{a}$ similarly.  Since $\im g(x)>\frac{(S\im g)(x)}{\absa{a(x)}^2}$, we have
\be\label{a7}
	\absa{a(x)}\im g(x) >\int_0^1 \frac{s(x,y)}{\absa{a(x)a(y)}} \absa{a(y)}\im g(y) \dd y\,.
\ee
Next, we use a generalization of Perron-Frobinius theorem.

\begin{theorem}[The Krein-Rutman theorem]
Let $X$ be a Banach space, $K\subset X$ a total cone (that is, $\overline{K-K}=X$) and $T:X\rightarrow X$ a compact linear operator that is positive (that is, $T(K)\subset K$) with positive spectral radius $r(T)$, then $r(T)$ is an eigenvalue of $T$ with an eigenvector $u\in K\backslash \{0\}:Tu=r(T)u$. 
\end{theorem}
In our case, $X=L^2[0,1]$, $K=\{f\in X, f\geq 0\}$, and $T$ is the integral operator with kernel $\frac{s(x,y)}{\absa{a(x)a(y)}}$. The operator $T$ is compact because its kernel is continuous.  The theorem tells us that there is a $u(x)\in K\backslash \{0\}$ such that 
\eq{
	\int_0^1\frac{s(x,y)}{\absa{a(x)a(y)}}u(y)\dd y =r(T) u(x)\,.
}
We take the inner product  of \eqref{a7} with the eigenvalue $u$ to get $\langle u, a\im g\rangle  > r(T)\langle u, a\im g\rangle$. Thus, $r(T)<1$.  Since $T$ is symmetric, this means $\norma{T}<1$.  Similarly $\norma{\hat{T}}<1$.

On the other hand
\be
	\begin{split}
	\absa{g(x)-\hg(x)} &=\frac{\int s(x,y)\absa{g(y)-\hg(y)} \dd y }{a(x)\hat{a}(x)}
	\end{split}
\ee		
Denote $w(x)=\sqrt{a(x)\hat{a}(x)}\absa{g(x)-\hg(x)}$,
\be
	w(x)\leq \int _0^1\frac{s(x,y)}{\sqrt{a(x)\hat{a}(x)}\sqrt{a(y)\hat{a}(y)}}w(y)\dd y\\\leq \left(\frac{1}{2}(T+\hat{T}) w \right)(x)\,.
\ee
So $\norma{w}\leq \norma{\frac{1}{2}(T+\hat{T})}\norma{w} <\norma{w}$, a contradiction.

\end{proof}

The following theorems concern the stability of the equation.   Here we assume $s$ to be bounded below, so that when $\eta$ gets small, we still have a bound on $g_i$.  We omit the proof here, because the proof is essentially a reinstatement of the uniqueness of solutions.  Our theorem are contained in more general theorems in \cite{Ajanki2015a}.  We present our theorem here for the readers' convenience, since they are easier to read.
\begin{theorem}[Stability in the bulk]\label{bulk stability}
Assume that $s$ is bounded below on $[0,1]^2$.  Assume $g$ is the solution to \eqref{sce}. Denote $m(z)=\int_0^1 g(x)\dd x$. Let $I$ be a bounded interval such that $\im m(z)$ is bounded below on $\mathcal{D}:=\{E+\ii\eta: E \in I, 0<\eta\leq 10\}$.  

Thus, there are positive constants $C$ and $\e_0$ such that for any $z\in \mathcal{D}$, if $\hg$ satisfies
\be
	\norma{\hg+\frac{1}{z+S\hg}}\leq \e\leq \e_0, \norma{\hg-g_z}\leq \e_0\,,
\ee
then on $\mathcal{D}$ we have $\norma{\hg-g_z}\leq C\e$.
\end{theorem}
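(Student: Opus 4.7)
The plan is to adapt the uniqueness argument from Theorem~\ref{thm: solution}, now treating the residual $r$ as a forcing term. Let $a := z + Sg$ and $\hat a := z + S\hat g$. Subtracting the two equations yields, with $h := \hat g - g$,
\[
h \;=\; \frac{Sh}{a\hat a} + r,\qquad r := \hat g + \frac{1}{z + S\hat g},
\]
i.e.\ $(I - M)h = r$ with $(Mh)(x) := (Sh)(x)/(a(x)\hat a(x))$. Choosing a branch of $\sqrt{a\hat a}$ holomorphic on $\mathcal{D}$ and setting $w := \sqrt{a\hat a}\,h$, this rewrites as
\[
(I - \tilde M)\,w \;=\; \sqrt{a\hat a}\,\,r,
\]
where $\tilde M$ has the \emph{symmetric} complex kernel $\tilde K(x,y) = s(x,y)/\bigl(\sqrt{a(x)\hat a(x)}\sqrt{a(y)\hat a(y)}\bigr)$. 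The task reduces to a uniform $L^2$-bound on $(I - \tilde M)^{-1}$ over $\mathcal{D}$.

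First I would record the a~priori control produced by the bulk hypothesis together with $s \ge s_{\min} > 0$: one has $\im a(x) \ge s_{\min}\,\im m \ge s_{\min}\,c_I$ uniformly in $x$ and $z \in \mathcal{D}$, hence $|a(x)| \ge s_{\min}\,c_I$. Combining $|g| = 1/|a|$ with the Cauchy--Schwarz estimate $|Sg(x)| \le \|s(x,\cdot)\|_2\,\|g\|_2$ also provides an upper bound $|a(x)| \le C$. The same bounds hold for $\hat a$ provided $\e_0$ is small enough, using $\hat a - a = S(\hat g - g)$. In particular $\sqrt{a\hat a}$ is bounded above and below on $\mathcal{D}$, so passing between $h$ and $w$ costs only absolute constants.

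The core step is invertibility of $I - \tilde M_0$, the operator obtained by setting $\hat g = g$. It has the explicit kernel $s(x,y)\,g(x)\,g(y)$, i.e.\ equals $D_g S D_g$. A subtle point is that the real positive majorant $|\tilde M_0|$ with kernel $s(x,y)|g(x)||g(y)|$ may have spectral radius tending to $1$ as $\eta \to 0$ in the bulk (in the GOE case $s \equiv 1$ with $g \to \ii$ at $E = 0$, a direct computation gives $\||\tilde M_0|\| = 1 - O(\eta)$), so the Krein--Rutman argument of Theorem~\ref{thm: solution} is too lossy to produce a quantitative bound. I would instead exploit the complex-symmetric structure: if $(I - \tilde M_0)\psi = 0$, then $\phi := g\psi$ satisfies $\phi = g^2 S\phi$, and the imaginary-part identity $|a|^2 \im g = \eta + S\im g$ (arising from $g = -1/a$) together with the bulk lower bound $\im g > 0$ forces $\phi \equiv 0$. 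Since $\tilde M_0$ is Hilbert--Schmidt ($s \in L^2$ and $g$ bounded), Fredholm theory then yields invertibility of $I - \tilde M_0$; combining with the uniform a~priori bounds and a continuity argument produces a uniform estimate $\|(I - \tilde M_0(z))^{-1}\|_{L^2 \to L^2} \le K$ on $\mathcal{D}$.

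The finish is a Neumann series. The difference $\tilde M - \tilde M_0$ has kernel $s(x,y)\bigl[1/\sqrt{a(x)\hat a(x)\,a(y)\hat a(y)} - g(x)g(y)\bigr]$, which is $O(\|\hat g - g\|_2)$ in Hilbert--Schmidt norm by a first-order expansion using the a~priori bounds. Taking $\e_0$ so small that $\|\tilde M - \tilde M_0\|_{L^2 \to L^2} \le 1/(2K)$ gives $\|(I - \tilde M)^{-1}\| \le 2K$, and undoing the substitution yields $\|\hat g - g\|_2 \le C\|r\|_2 \le C\e$. The principal obstacle is the uniform invertibility bound on $I - \tilde M_0$ over $\mathcal{D}$: the inverse constant must not degenerate as $\eta \to 0$, even though the spectral radius of the real majorant $|\tilde M_0|$ does tend to one. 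This is precisely the quantitative QVE stability proved in \cite{Ajanki2015a}; in our setting, the bulk hypothesis $\im m \ge c_I$ is what drives the argument uniformly.
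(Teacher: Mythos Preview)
The paper omits this proof, saying only that it ``is essentially a reinstatement of the uniqueness of solutions.'' Your proposal unpacks that claim and matches the structure of the paper's finite-dimensional analogue (Theorem~\ref{thm: stability} together with Lemma~\ref{lem: spectrum}): linearize to $(I-\tilde M)w = \sqrt{a\hat a}\,r$, perturb $\tilde M$ back to the unperturbed $\tilde M_0 = D_g S D_g$, and show $I - \tilde M_0$ is uniformly invertible.

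You correctly flag the subtlety the paper's remark glosses over: the Krein--Rutman bound $r(|\tilde M_0|) < 1$ from the uniqueness proof degenerates as $\eta \downarrow 0$ in the bulk, so it cannot by itself give a uniform bound on $(I-\tilde M_0)^{-1}$ over $\mathcal{D}$. Note, though, that your kernel-triviality step via the imaginary-part identity is in fact the same Krein--Rutman step in disguise (it yields $(I-T)\chi = |g|\eta > 0$ for $\chi = \im g/|g|$, hence $r(T) < 1$), so it gives invertibility at each fixed $z$ but not uniformity. In the finite-dimensional case, Lemma~\ref{lem: spectrum} resolves uniformity by a compactness argument in the space of matrix \emph{entries} (which lie in a fixed compact $K \subset \C^+$ under the bulk hypothesis), not in $z$: along a hypothetical bad sequence the limiting eigenvalue equation saturates the triangle inequality and forces the entries onto the positive real axis, contradicting $K \subset \C^+$. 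Your ``continuity argument'' would need the analogous input, since $\mathcal{D}$ itself is not compact: one uses that $g(z,\cdot)$ for $z \in \mathcal{D}$ ranges in an $L^\infty$-bounded set with $\im g$ uniformly bounded below, extracts a limit operator, and reruns the saturation argument to force $g_*$ real. That infinite-dimensional analogue of Lemma~\ref{lem: spectrum} is where the real work sits; your proposal is, if anything, more candid about this point than the paper's one-line dismissal.
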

\begin{theorem}[Stability in $S$]\label{s stability}
Assume that $s$ and $\hat{s}$ are bounded below on $[0,1]^2$.  Assume $g$ is the solution to \eqref{sce} and $\hg$ solves \eqref{sce} with $\hS$ replacing $S$.  Let $I$ be a bounded interval such that $\im m$ is bounded below on $\mathcal{D}:=\{E+\ii\eta:E \in I, 0<\eta \leq 10\}$.  Thus, there are positive $C$ and $\e_0$ such that for any $z\in \mathcal{D}$, if $\hS$ satisfies $\norma{S-\hS}\leq \e\leq \e_0$, then on $\mathcal{D}$, we have $\norma{\hg-g}\leq C\e$.
\end{theorem}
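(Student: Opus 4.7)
The plan is to linearize the self-consistent equation around $g$ and then invert a compact linear operator.  Setting $w \deq g-\hg$ and subtracting the equation \eqref{sce} written with $S$ and with $\hS$, the identity $Sg - \hS\hg = Sw + (S-\hS)\hg$ yields
\eqs{
   w \;=\; \frac{Sw}{a\hat a} + \frac{(S-\hS)\hg}{a\hat a}, \qquad a\deq z+Sg, \quad \hat a\deq z+\hS\hg,
}
that is, $(I-K)w = r$ where $K$ is the integral operator with kernel $s(x,y)/\bigl(a(x)\hat a(x)\bigr)$ and $r(x) = (S-\hS)\hg(x)/\bigl(a(x)\hat a(x)\bigr)$.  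The theorem will follow once I establish $\|(I-K)^{-1}\|\leq C$ uniformly in $z\in\mathcal{D}$ together with the input bound $\|r\|_{L^2}\leq C\e$.

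The bound on $r$ is the easy step.  Because $s$ is bounded below by some $s_0 > 0$ and $\im m\geq c$ on $\mathcal{D}$, the identity $\im a = \eta + S\,\im g$ forces $\im a(x)\geq s_0 c$; combined with the trivial upper bound $|a(x)|\leq |z|+\|s\|_{L^2}\|g\|_{L^2}$, this confines $a(x)$ to a compact subset of $\C^+$ uniformly on $\mathcal{D}$.  The analogous two-sided bound on $\hat a$ follows from a soft continuity argument in $\hS$ (using Theorem~\ref{thm: solution}), which I enforce by shrinking $\e_0$ if necessary.  Therefore $1/(a\hat a)$ is bounded and $\|r\|_{L^2}\leq C\|S-\hS\|_{\mathrm{op}}\|\hg\|_{L^2}\leq C\e$.

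The heart of the argument is the resolvent bound $\|(I-K)^{-1}\|\leq C$, which I obtain by imitating the Krein--Rutman step in Theorem~\ref{thm: solution}.  Let $D$ be multiplication by the complex-valued $\sqrt{a\hat a}$; by the preceding step $D$ is bounded with bounded inverse, so $K$ is similar to the complex-symmetric operator $\hat T\deq D K D^{-1}$ with kernel $s(x,y)/\bigl(\sqrt{a(x)\hat a(x)}\sqrt{a(y)\hat a(y)}\bigr)$.  Its modulus is a positive real symmetric kernel, and by the arithmetic--geometric mean inequality
\eqs{
   \frac{s(x,y)}{\sqrt{|a(x)||\hat a(x)||a(y)||\hat a(y)|}} \;\leq\; \tfrac12\!\left(\frac{s(x,y)}{|a(x)||a(y)|} + \frac{s(x,y)}{|\hat a(x)||\hat a(y)|}\right).
}
Hence $\|\hat T\|_{L^2}\leq \||\hat T|\|_{L^2}\leq \tfrac12(\|T_g\|+\|T_{\hg,s}\|)$, where $T_g$ and $T_{\hg,s}$ are the positive symmetric operators whose kernels appear on the right.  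The Krein--Rutman step from the uniqueness proof, applied to $g$, gives $\|T_g\|<1$; the bulk assumption upgrades this to a uniform gap $\|T_g\|\leq 1-c_1$.  The same argument applied to $\hg$ yields $\|T_{\hg,\hat s}\|\leq 1-c_1$, and replacing $\hat s$ by $s$ costs only $\mathcal{O}(\e)$ in operator norm.  Thus $\|\hat T\|\leq 1-c_0$ uniformly, whence $\|(I-\hat T)^{-1}\|\leq c_0^{-1}$ by Neumann series; conjugating back by $D$ multiplies this by the bounded factor $\|D\|\|D^{-1}\|$, giving $\|(I-K)^{-1}\|\leq C$ and hence $\|w\|_{L^2}\leq C\e$.

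The principal obstacle is precisely the last step: because $K$ is not self-adjoint, a spectral-radius bound on $K$ does not directly imply a bound on $\|(I-K)^{-1}\|$.  The two-stage reduction---first the \emph{complex} similarity by $D$ to recast the kernel in symmetric (though complex) form, then an AM--GM bound to dominate its modulus by a convex combination of positive symmetric kernels to which Krein--Rutman applies---is what makes the estimate quantitative and uniform.  A secondary subtlety is that the bulk hypothesis $\im m\geq c$ is postulated only for $g$; the corresponding lower bound on $\im \hat m$ needed to control $\hat a$ must be obtained by continuity of the solution map $S\mapsto g$ after shrinking $\e_0$, which is why the theorem is only stated for sufficiently small perturbations.
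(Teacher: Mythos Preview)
The paper does not actually prove this theorem; it omits the argument with the remark that it ``is essentially a reinstatement of the uniqueness of solutions''.  Your outline follows that spirit, and the algebraic reduction $(I-K)w=r$ together with the symmetrization $K\sim\hat T$ via $D$ is correct.  However, the central estimate has a genuine gap.

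You assert that ``the bulk assumption upgrades $\|T_g\|<1$ to a uniform gap $\|T_g\|\leq 1-c_1$''.  This is false: on the bulk domain $\|T_g\|\to 1$ as $\eta\downarrow 0$.  Indeed, taking imaginary parts of \eqref{sce} and rearranging gives the exact identity
\eqs{
   (I-T_g)\bigl(|a|\,\im g\bigr)\;=\;\frac{\eta}{|a|}\,.
}
Under the bulk hypothesis together with $s\geq s_0>0$ one has $|a|\asymp 1$ and $\im g\geq c'>0$ uniformly in $z\in\mathcal D$, so $\bigl\||a|\,\im g\bigr\|_{L^2}$ is bounded below while the right-hand side has norm $O(\eta)$.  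A uniform gap would force $\bigl\||a|\,\im g\bigr\|\leq c_1^{-1}O(\eta)\to 0$, a contradiction.  Consequently your AM--GM bound only yields $\|\hat T\|\leq\tfrac12(\|T_g\|+\|T_{\hg,s}\|)\leq 1$, and the Neumann-series argument for $(I-\hat T)^{-1}$ fails to give a uniform bound.

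What actually keeps $(I-\hat T)^{-1}$ bounded is not smallness of $\|\hat T\|$ but the \emph{phase} carried by its kernel: since $a(x)$ lies in a fixed compact subset of $\C^+$ (so $\arg a(x)$ is bounded away from $0$ and $\pi$), the kernel $s(x,y)/\sqrt{a(x)\hat a(x)a(y)\hat a(y)}$ cannot be rendered nonnegative by any diagonal conjugation.  This is precisely the mechanism of Lemma~\ref{lem: spectrum} in the finite-rank setting: there one has only $r(|A|)\leq 1$, yet the fact that the entries sit in a compact $K\subset\C^+$ forces the spectrum of $A$ away from $1$.  The infinite-dimensional analogue requires either a compactness argument of that type or the quantitative rotation--contraction estimates of \cite{Ajanki2015a}; your AM--GM step discards exactly this phase information and cannot recover it.

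The circularity you flag at the end is also real: bounding $\hat a$ by ``soft continuity in $\hS$'' via Theorem~\ref{thm: solution} presupposes continuity of $S\mapsto g$, which is the content of the theorem.  One standard way around this is to interpolate $S_t=(1-t)S+t\hS$ and run a continuity argument in $t\in[0,1]$, feeding Theorem~\ref{bulk stability} at each step; but this too rests on the correct invertibility estimate above, not on a Neumann series.
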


\section{Local law and bulk universality of a general ensemble}\label{sec: general local law}
In this section we consider $s_{ij}\in[c/N,C/N]$ for some $c,C>0$, and do not put low-rank conditions on $(s_{ij})$.  We have Theorem \ref{weak law sparse} and \ref{bulk universality general} which are parallel to Theorem \ref{thm: local law} and Theorem \ref{thm: bulk universality}.   Again, we point out that in the special case $q=N$, Theorem \ref{weak law sparse} and \ref{bulk universality general} are contained in Theorem 1.6 and Theorem 1.15 in \cite{Ajanki2015}. 

We set a sparse parameter $q=N^\kappa$ with $\kappa \in (0,1)$. Assume either the biased case where 
\eq{
	\P\qa{ h_{ij}=\frac{1}{\sqrt{q}} }=qs_{ij} \quad\text{and}\quad \P\qa{h_{ij}=0}=1-qs_{ij}
}
after centralization or the unbiased one where
\eq{
	\P\qa{ h_{ij}=\pm\frac{1}{\sqrt{q}} }=\frac{qs_{ij}}{2} \quad\text{and}\quad \P\qa{h_{ij}=0}=1-qs_{ij}\,.
}
In either case, we have moment bounds
\be
	\E\absa{h_{ij}}^p\leq \frac{C^p}{Nq^{\frac{p}{2}-1}}\,.
\ee
So, we have large deviation bounds, assuming $\absa{A_j}\prec 1$ and $\absa{B_{kl}}\prec 1$:
\be
	\sum_{j}(h_{ij}^2-s_{ij}) A_j \prec  \frac{1}{\sqrt{q}}+\left(\frac{1}{N}\sum_j \absa{A_j}^2\right)^{1/2}
\ee
and
\be
	\sum_{k\neq l} h_{ik} B_{kl} h_{lj}\prec \frac{1}{\sqrt{q}}+\left(\frac{1}{N}\sum_{k,l} \absa{B_{kl}}^2 \right)^{1/2}\,.
\ee
As before, we set a control parameter $\Phi\deq\frac{1}{\sqrt{q}}+\frac{1}{\sqrt{N\eta}}$. We think of $s^{(N)}_{ij}$ as a step function $s^{(N)}$ on $[0,1]^2$, that is, $s^{(N)}(x,y) = Ns^{(N)}_{ij}$, for $x\in\left(\frac{i-1}{N}, \frac{i}{N}\right]$ and $y\in\left(\frac{j-1}{N}, \frac{j}{N}\right]$. Let $S^{(N)}$ be the integral operator with kernel $s^{(N)}$. Thus, $S^{(N)}$ is uniformly bounded in the Hilbert-Shmidt norm. Consider the self-consistent equation
\be\label{sce n}
	g^{(N)}(x)=-\frac{1}{z+S^{(N)}g^{(N)}(x)}\,,
\ee
where $g: \C^+\times [0,1] \rightarrow \C^+$. By Theorem \ref{thm: solution}, $g^{(N)}$ exists and is unique.  It is easy to see that $g^{(N)}_z$ is also a step function on $[0,1]$, whose value on $(\frac{i-1}{N},\frac{i}{N}]$ we denote by $g_i$. Define $m^{(N)}\deq\int_0^1 g^{(N)}\dd x$. In the rest of this paper, we will omit the $N$ for simplicity.  As before, we are going to analyze the Green's function defined by $G\deq (H-z)^{-1}$. First, we apply Schur's complement formula \eqref{eq: schur}, to get $G_{ii}^{-1}=-z-\sum_k s_{ik} G_{kk} +R_i$, where the error term $R_i$ is defined as
\be	\label{eq: error B}
	R_i \deq h_{ii}+\sum_k s_{ik} \frac{G_{ik}G_{ki}}{G_{ii}}-Q_i \sum_{k,l}^{(i)}h_{ik}G_{kl}^{(i)}h_{li}\,.
\ee
Neglecting the error term $R_i$, we get equation \eqref{sce n}.

We define bulk interval as in  Definition \ref{def: bulk interval}. We have a local law as follows.   
\begin{theorem}[Local law for the sparse model]\label{weak law sparse}
Let $H$ be a family of random matrices defined above. Assume that $s_{ij}\in[\frac{c}{N},\frac{C}{N}]$ for some positive constants $c$ and $C$, that $g=g^{(N)}$ solves the self-consistent equation \eqref{sce n}, and that $m(z)=m^{(N)}(z)=\int_0^1 g^{(N)}(x)\dd x$.  Assume that $I$ is a bulk interval. Thus, on the domain 
\eq{
	\cal{D}_\delta^I\deq \ha{ E+\ii\eta: E\in I, N^{\delta-1}\leq \eta \leq10}\,.
}
we have $\Lambda\prec \Phi$.
\end{theorem}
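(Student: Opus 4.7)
The plan is to mirror the three-step architecture of the proof of Theorem~\ref{thm: local law}: an \emph{a priori}/forbidden-region bound (Lemma~\ref{lem: forbidden}), an initial bound at $\eta=10$ (Lemma~\ref{lem: initial estimate}), and a continuity argument that pushes the estimate down to the optimal scale $\eta \geq N^{\delta-1}$. Here $\Lambda$ is understood as $\max\{\max_i |G_{ii}-g_i|,\ \max_{i\neq j}|G_{ij}|\}$; the $\theta_i$-weights present in Notation~\ref{not: parameters} become unnecessary because the assumption $s_{ij}\in[c/N,C/N]$ forces all natural weights to be of order one.

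First, I would derive the approximate self-consistent system for $(G_{ii})_{i=1}^N$ via the Schur complement: $G_{ii}^{-1} = -z - \sum_k s_{ik} G_{kk} + R_i$, with $R_i$ as in \eqref{eq: error B}. Under the indicator $\phi = \mathbf{1}[\Lambda\leq N^{-c}]$, the bulk assumption $\im m \geq c_I$ implies $\phi|G_{ii}|\asymp 1$ and analogous bounds on all minors $G_{jj}^{(i)}$, $G_{kl}^{(ij)}$ via the resolvent identities of Lemma~\ref{lem: resolvent}. Iterating \eqref{eq: off diagonal 2} gives an off-diagonal formula which, combined with the general-ensemble large deviation estimates stated in Section~\ref{sec: general local law} and Ward's identity $\sum_{k,l}|G_{kl}^{(ij)}|^2 = \eta^{-1}\sum_k \im G_{kk}^{(ij)}\prec N/\eta$, yields $\phi |G_{ij}|\prec \Phi$. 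The bound on $R_i$ then follows in the same way as the original proof: $|h_{ii}|\prec 1/\sqrt{q}$, the sum $\sum_k s_{ik} G_{ik}G_{ki}/G_{ii}$ is $\prec \Phi^2\prec \Phi$, and the $Q_i$-term is $\prec \Phi$ by the two large deviation bounds in Section~\ref{sec: general local law}.

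The new ingredient, and the main point to verify carefully, is the stability step. Once $\phi R_i \prec \Phi$ uniformly in $i$, I would regard the vector $(G_{ii})_i$ as the step function $G(x) := \sum_i \ind{((i-1)/N,\,i/N]}(x)\, G_{ii}$ on $[0,1]$, so that the perturbed equation becomes
\eq{
G(x) + \frac{1}{z + (SG)(x)} = r(x), \qquad \|r\|_{L^\infty}\prec \Phi,
}
with $S$ the integral operator associated to $s^{(N)}$. Applying the functional stability Theorem~\ref{bulk stability} to $g=g^{(N)}$ and $\hat g = G$ (valid because $s\geq c$ on $[0,1]^2$ and $\im m \geq c_I$ on the bulk domain) produces $\|G - g^{(N)}\|_{L^2}\prec \Phi$. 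Since $G-g^{(N)}$ is piecewise constant with equal-sized cells, and since the residual $r(x)$ is in fact uniformly bounded by the same quantity $\Phi$ independently of $i$, a straightforward cell-by-cell inspection of the proof of Theorem~\ref{bulk stability} upgrades this to $\max_i |G_{ii}-g_i|\prec \Phi$, i.e.\ $\Lambda_d\prec \Phi$. Together with $\Lambda_o\prec\Phi$ from the previous paragraph, this gives $\phi \Lambda \prec \Phi$, the analog of Lemma~\ref{lem: forbidden}.

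The initial estimate at $\eta=10$ follows as in Lemma~\ref{lem: initial estimate}: the trivial bound $|G_{ii}|\leq 1/\eta$ is boosted to $|G_{ii}|\prec 1$ via the Schur identity together with the lower bound $\im G_{kk}^{(i)}\gtrsim \eta$ produced by iterating the same identity. Finally, a lattice $\Delta\subset \mathcal{D}_\delta^I$ of size at most $N^{20}$ together with the deterministic Lipschitz bound $|\partial_z \Lambda|\leq N^2$ promotes the pointwise estimate to a uniform one on $\mathcal{D}_\delta^I$, exactly as at the end of Section~\ref{sec: proof of local law}. The main obstacle in the whole argument is the transition, in the stability step, from the finite-dimensional Perron-Frobenius argument used for Theorem~\ref{thm: stability} to the infinite-dimensional Krein-Rutman-type stability encoded in Theorem~\ref{bulk stability}: I need to ensure that the $L^2$ conclusion translates into a pointwise bound for each $G_{ii}-g_i$, which relies on the lower bound on $s$ and on the uniform (not just $L^2$) smallness of the residual $r$.
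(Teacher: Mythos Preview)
Your proposal is correct and follows the same three-step architecture as the paper (forbidden-region bound, initial estimate at $\eta=10$, continuity). The one substantive difference is in the stability step, where you upgrade the $L^2$ bound $\|G-g\|_{L^2}\prec\Phi$ to the pointwise bound $\max_i|G_{ii}-g_i|\prec\Phi$ by ``cell-by-cell inspection of the proof of Theorem~\ref{bulk stability}.'' This works, but the paper handles it more cleanly and without reopening the stability proof: once $\|G-g\|_{L^2}\prec\Phi$, the flatness assumption $s_{ij}\leq C/N$ means the operator $S$ is bounded from $L^2$ to $L^\infty$, so $\max_i\bigl|\sum_j s_{ij}(G_{jj}-g_j)\bigr|\prec\Phi$; since the denominators $z+(SG)_i$ and $z+(Sg)_i$ are both bounded away from zero on the bulk domain, this yields $\max_i\bigl|1/(z+(SG)_i)-1/(z+(Sg)_i)\bigr|\prec\Phi$. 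Then the triangle inequality
\[
|G_{ii}-g_i|\;\leq\;\Bigl|G_{ii}+\tfrac{1}{z+(SG)_i}\Bigr|+\Bigl|\tfrac{1}{z+(SG)_i}-\tfrac{1}{z+(Sg)_i}\Bigr|
\]
gives $\Lambda_d\prec\Phi$ directly. In other words, the paper exploits the smoothing property of $S$ rather than re-deriving pointwise stability; your route is heavier but arrives at the same place.
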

\begin{proof}[Sketch of proof]
Assume that $\phi\Lambda\prec N^{-c}$ where $\phi$ is some event.  We shall show $\La\prec\Phi :=\frac{1}{\sqrt{N\eta}}$.  In the following estimates, we omit universal constants that are independent of $N$.
First, we get some information from the fact that $\im m $ is bounded below, that is,
\be
	\absa{g_i}\leq \frac{1}{\sum_j s_{ij}\im g_j}\leq \frac{1}{\im m}\leq \frac{1}{c_I}\,.
\ee
So $g$ is uniformly bounded in $L^\infty[0,1]$. Moreover,
\be
	\im g_i \geq \frac{\sum_j s_{ij} \im g_j }{\absa{\absa{z}+\sum_j s_{ij} \frac{1}{c_I}}^2}\geq \im m \geq c_I\,.
\ee
So $ \im g_i$ is uniformly bounded below.  Therefore $\absa{G_{ii}}\asymp 1$.  Moreover, $\absa{G_{ii}^{\T}}\prec 1$ by resolvent identities.

For $\Lambda_o$, we have $G_{ij}\prec \Phi$. For $\Lambda_d$, we firstly have $R_i\prec \Phi$, which implies
\be
	\max_{i}\absa{{G_{ii}+\frac{1}{z+\sum_j s_{ij} G_{jj}}}}\prec \Phi\,.
\ee
Theorem \ref{bulk stability} tells us that $\sqrt{\frac{1}{N}\sum_i\absa{G_{ii}-g_i}^2}\prec \Phi$. By the $L^\infty$-boundedness of $s$, we have
\be
	\absa{\frac{1}{z+\sum_j s_{ij} G_{jj}}-\frac{1}{z+(Sg)_i}}\prec \Phi\,.
\ee
Therefore,
\be
	\Lambda_d \leq \max_{i}\absa{{G_{ii}+\frac{1}{z+\sum_j s_{ij} G_{jj}}}}+\max_i\absa{\frac{1}{z+\sum_j s_{ij} G_{jj}}-\frac{1}{z+(Sg)_i}}\prec \Phi\,.
\ee
Finally, we apply a continuity argument to conclude the proof.
\end{proof}

Note that the theorem does not assume any limit of $s^{(N)}$. Instead, it requires $m^{(N)}$ to have some common ``bulk,'' that is, a uniform lower bound for $\im m$ near some interval $I\subset\R$.  In particular, if $s^{(N)}$ is close enough to some certain $s^*$, whose $\im m^* $ is bounded below on $\cal{D}_0^I$, then we have uniform lower bound for $\im m$.

\begin{corollary}
Assume the same conditions as in Theorem \ref{weak law sparse}.  We assume $s^*$ satisfies the conditions of Lemma \ref{bulk stability}, and that $\im m^*(z)$ is bounded below on some spectral domain ${\mathcal{D}_0^I\deq \{E+i\eta: E \in I, \eta\in (0,10]\}}$. For any positive $\de$, define ${\mathcal{D}_\delta^I\deq \{E+i\eta: E \in I, \eta\in (N^{\de-1},10]\}}$. Thus, there exists some positive $\e_0$ such that if $\norma{s^{(N)}-s^*} \leq \e_0$, we have $\Lambda\prec \Phi$.
\end{corollary}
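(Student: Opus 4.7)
The plan is to combine the stability of the self-consistent equation under perturbations of the kernel (Theorem \ref{s stability}) with the local law for the general ensemble (Theorem \ref{weak law sparse}). The overall strategy is that $L^2$-closeness of $s^{(N)}$ to $s^*$ forces $g^{(N)}$ to be $L^2$-close to $g^*$ uniformly in the spectral parameter, and this integral control transfers the lower bound on $\im m^*$ to a lower bound on $\im m^{(N)}$, so that $I$ becomes a bulk interval for $H^{(N)}$ and Theorem \ref{weak law sparse} can be invoked directly.

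First I would verify the hypotheses of Theorem \ref{s stability} for the pair $(s^*, s^{(N)})$. The kernel $s^*$ is bounded below by hypothesis (it satisfies the assumptions of Lemma \ref{bulk stability}), while the step-function kernel $s^{(N)}$ has values $Ns^{(N)}_{ij}\geq c$ because $s^{(N)}_{ij}\in[c/N,C/N]$ by the conditions inherited from Theorem \ref{weak law sparse}. Since $\im m^*$ is assumed bounded below by some $c_I>0$ on $\cal{D}_0^I$, Theorem \ref{s stability} yields constants $\e_*,C_0>0$, depending only on the kernel bounds $c$, $C$, the bulk constant $c_I$, and $I$, such that whenever $\norma{s^{(N)}-s^*}\leq \e_0\leq \e_*$, we have $\norma{g^{(N)}(z,\cdot)-g^*(z,\cdot)}\leq C_0\e_0$ uniformly for $z\in \cal{D}_0^I$.

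By the Cauchy--Schwarz inequality this upgrades to pointwise control of the Stieltjes transforms,
\[
\absa{m^{(N)}(z)-m^*(z)}=\absBB{\int_0^1 \pa{g^{(N)}(z,x)-g^*(z,x)}\,\dd x}\leq C_0\e_0,
\]
so shrinking $\e_0$ to $\min\h{\e_*,\,c_I/(2C_0)}$ gives $\im m^{(N)}(z)\geq c_I/2>0$ on $\cal{D}_0^I$. Hence $I$ is a bulk interval for the ensemble $H^{(N)}$ with bulk constant $c_I/2$ independent of $N$, and a direct application of Theorem \ref{weak law sparse} produces $\Lambda\prec \Phi$ on $\cal{D}_\delta^I$.

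The main point that requires care is the uniformity in $N$ of the constants $\e_*, C_0$ furnished by Theorem \ref{s stability}: they must depend only on the kernel bounds $c, C$ and on $c_I$, not on $N$ or on the particular realization of $s^{(N)}$. This uniformity is built into the proof of Theorem \ref{s stability}, since the relevant quantity is the spectral gap of the compact integral operator with kernel $s^*(x,y)/\absa{a^*(x)\,a^*(y)}$, whose spectral radius is strictly less than one by the Krein--Rutman argument used in the uniqueness part of Theorem \ref{thm: solution}, and the size of this gap is controlled entirely by the data already listed.
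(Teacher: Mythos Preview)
Your proposal is correct and follows exactly the approach the paper indicates in the paragraph immediately preceding the corollary: use the stability of the self-consistent equation in the kernel (Theorem \ref{s stability}) to transfer the uniform lower bound on $\im m^*$ to $\im m^{(N)}$, thereby making $I$ a bulk interval for $H^{(N)}$ so that Theorem \ref{weak law sparse} applies directly. The paper does not spell out a formal proof of the corollary beyond that remark, so your argument supplies the natural details.
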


\begin{theorem}\label{bulk universality general}
Let $H^{(N)}$ be a family of random matrices defined above. Assume that $s_{ij}\in[\frac{c}{N},\frac{C}{N}]$ for some positive constants $c$ and $C$, that $g=g^{(N)}$ solves Equation \eqref{sce n}, and that ${m(z)=m^{(N)}(z)=\int_0^1 g^{(N)}(x)\dd x}$.  Let $I$ be a bulk interval. Let $\rho^{(n)}$ be the n-point correlation functions of the eigenvalues of $H$ and $\rho^{(N)}$ be the density on $I$.  Let $O\in C^\infty_0(\R^n)$ be a test function. Fix a parameter $b=N^{c-1}$ for arbitrarily small $c$. We have,
\eq{\begin{split}
	\lim_{N\rightarrow \infty} \int_{E-b}^{E+b} \int_{\R^n} O(\alpha_1,\dots,\alpha_n) \left[ \frac{1}{\rho(E)^n}\rho^{(n)} \left( E'+\frac{\alpha_1}{N\rho(E)},\dots,E'+\frac{\alpha_n}{N\rho(E)}\right)\right.\\\left.
	-\frac{1}{(\rho_{sc}(E))^n} \rho_{GOE}^{(n)} \left( E''+\frac{\alpha_1}{\rho_{sc}^{(N)}(E)},\dots,E''+\frac{\alpha_n}{\rho_{sc}^{(N)}(E)}\right)\right]\dd \alpha_1\dots\dd\alpha_n \frac{\dd E'}{2b} =0
	\end{split}
}
for any $E''\in (-2,2)$.
\end{theorem}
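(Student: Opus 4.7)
The plan is to follow verbatim the three-step strategy used in Section \ref{sec: universality} to derive Theorem \ref{thm: bulk universality} from Theorem \ref{thm: local law}, but now starting from the local law of Theorem \ref{weak law sparse} as input. Let $H_t$ denote the Ornstein--Uhlenbeck evolution \eqref{eq: Ht} run on $H_0=H$ for time $t\in[0,T]$ with $T=N^{-1+\varepsilon}$ and $\varepsilon$ arbitrarily small. The OU flow preserves $s_{ij}=\E|h_{ij}(t)|^2$ and the moment bounds $\E|h_{ij}(t)|^p\le C^p/(Nq^{p/2-1})$, so Theorem \ref{weak law sparse} applies uniformly in $t\in[0,T]$: on $\cal{D}_\delta^I$ we have $\Lambda_{H_t}\prec\Phi$. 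In particular, $|G_{ii}(z;H_t)|\asymp 1$ and $|G_{ij}(z;H_t)|\prec\Phi$ for $i\ne j$.

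Step 1 (comparison): I would repeat the proof of Lemma \ref{comparison 2} to get a Green's function comparison theorem of the form
\eq{
\absBB{\E F\pa{\tfrac{1}{N}\tr G_t(z_1),\dots,\tfrac{1}{N}\tr G_t(z_n)}-\E F\pa{G_t\rightarrow G_0}}\le \frac{CTN^{1+c\delta}}{\sqrt q},
}
for $\eta\in[N^{-1-\delta},N^{-1}]$ and smooth test $F$. That argument only relied on (a) Lemma \ref{comparison 1}, whose hypotheses involve no low-rank assumption, and (b) eigenvector delocalization and off-diagonal Green's function bounds coming from the local law, both of which are provided by Theorem \ref{weak law sparse}. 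With $T=N^{-1+\varepsilon}$, this yields the exact analogue of Theorem \ref{comparison}: the $n$-point correlation functions of $H_0$ and $H_T$ coincide in the rescaled integrated sense.

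Step 2 (decomposition and Landon--Yau): Since $s_{ij}\ge c/N$, I can write $H_T=\widehat H+\vartheta\sqrt{T}\,\mathcal G$ where $\mathcal G$ is a standard GOE independent of $\widehat H$ and $\vartheta^2<c/2$ is a fixed small constant. Then $\widehat H$ has independent entries with variance matrix $\widehat s_{ij}=s_{ij}-\vartheta^2/N\in[c/(2N),C/N]$ and the same moment bounds (up to a constant), so Theorem \ref{weak law sparse} applies to $\widehat H$ provided the interval $I$ is still a bulk interval for the self-consistent solution associated with $\widehat S$; this is where Theorem \ref{s stability} enters: $\|S-\widehat S\|=O(1/N)$ forces the associated $\widehat m$ to differ from $m$ by $o(1)$ on $\cal D_0^I$, so $\im\widehat m$ remains bounded below on a slightly shrunk bulk interval $I'\subset I$. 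Diagonalizing $\widehat H$ gives $H_T=V+\sqrt{T}W$ with $W$ a GOE and $V$ diagonal; the local law for $\widehat H$ gives $c_V\le \im m_V(E'+i\eta)\le C_V$ for $E'\in I'$ and $\eta\in[N^{-1+\delta},10]$, which is the $(l,G)$-regularity hypothesis of Theorem \ref{optimal speed} with $l=N^{-1+\delta}$ and $G$ of order one. Choosing $\delta<\varepsilon/3$ puts us in the regime $N^\varepsilon l\le T\le N^{-\varepsilon}G^2$, so Theorem \ref{optimal speed} yields bulk universality for $H_T$. Combined with Step 1, this gives universality for $H_0=H$.

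The main obstacle, as in the low-rank case, is propagating the ``bulk'' assumption through the two perturbations we make: passing from $S$ to $\widehat S$, and verifying that the regularity window required by Theorem \ref{optimal speed} lies well within the bulk interval. Both are handled by the stability Theorems \ref{bulk stability} and \ref{s stability}, which ensure that $\im m$ varies continuously in both spectral parameter and in $S$ (in $L^2$), so a uniform lower bound persists under $O(1/N)$ perturbations. Once this is in place, every other ingredient—the local law for $H_t$ and $\widehat H$, the moment-matching comparison, and the Landon--Yau input—is available in the generality of $s_{ij}\in[c/N,C/N]$ without modification.
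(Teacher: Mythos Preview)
Your proposal is correct and follows exactly the route the paper itself takes: the paper's proof of Theorem \ref{bulk universality general} is literally just ``The proof is parallel to that of Theorem \ref{thm: bulk universality},'' i.e.\ run the OU flow, compare via Lemma \ref{comparison 1}/\ref{comparison 2}, split off a small GOE using $s_{ij}\ge c/N$, and apply Theorem \ref{optimal speed} with the local law from Theorem \ref{weak law sparse} as the regularity input. Your invocation of Theorem \ref{s stability} to verify that $I$ remains a bulk interval for $\widehat H$ is the correct way to justify the analogue of Theorem \ref{thm: local law hH} in this setting, which the paper leaves implicit.
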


The proof is parallel to that of Theorem \ref{thm: bulk universality}.

\bibliographystyle{abbrv}
\bibliography{random_graph.bib}

\end{document}